\newtheorem{thm}{\sc Theorem.}[section]
\newtheorem{lem}[thm]{\sc Lemma.}
\newtheorem{cor}[thm]{\sc Corollary.}
\newtheorem{rem}[thm]{\sc Remark.}
\newenvironment{AMS}%
{{\upshape\bfseries AMS subject classifications. }\ignorespaces}{}
\newenvironment{keywords}{{\upshape\bfseries Key words. }\ignorespaces}{}
\newcommand{\R}{{\mathbb R}}
\newcommand{\dx}{\; {\rm d}x}
\newcommand{\ds}{\; {\rm d}s}
\renewcommand{\vec}{}
\newcommand{\unitn}{\vec{\rm n}}
\newcommand{\dd}[1]{\frac{\rm d}{{\rm d}#1}}
\newcommand{\ddt}{\dd{t}}
\def\epsilon{\varepsilon} 
\newcommand{\cPsi}{c_\Psi}
\newcommand{\wD}{w_{\partial\Omega}}
\newcommand{\ShD}{S^h_D}
\newcommand{\vol}{\operatorname{vol}}
\newcommand{\hatalpha}{\widehat\alpha} 
\def\vL{L\kern-0.08cm\char39}
\begin{document}
\title{On the stable discretization
of strongly anisotropic phase field models with applications to crystal growth}

\author{John W. Barrett\footnotemark[2] \and 
        Harald Garcke\footnotemark[3]\ \and 
        Robert N\"urnberg\footnotemark[2]}

\renewcommand{\thefootnote}{\fnsymbol{footnote}}
\footnotetext[2]{Department of Mathematics, 
Imperial College London, London, SW7 2AZ, UK}
\footnotetext[3]{Fakult{\"a}t f{\"u}r Mathematik, Universit{\"a}t Regensburg, 
93040 Regensburg, Germany}

\date{}

\maketitle

\begin{abstract}
We introduce unconditionally stable finite element approximations for
anisotropic Allen--Cahn and Cahn--Hilliard equations. These equations
frequently feature in phase field models that appear in materials science. 
On introducing the novel fully practical finite element approximations we prove
their stability and demonstrate their applicability with some numerical
results.

We dedicate this article to the memory of our colleague and friend 
Christof Eck (1968--2011) in recognition of his fundamental contributions to
phase field models.
\end{abstract} 

\begin{keywords} 
phase field models, anisotropy, Allen--Cahn, 
Cahn--Hilliard, mean curvature flow, surface diffusion,
Mullins--Sekerka, finite element approximation
\end{keywords}

\begin{AMS} 65M60, 65M12, 35K55, 74N20. \end{AMS}
\renewcommand{\thefootnote}{\arabic{footnote}}

\section{Introduction} \label{sec:0}
The isotropic Cahn--Hilliard equation
\begin{equation} \label{eq:CH}
\theta\,\tfrac{\partial u}{\partial t}=
 \nabla \,.\,(\,b(u)\, \nabla\, w),\qquad 
 w = -\epsilon\,\Delta\, u + \epsilon^{-1}\,\Psi'(u)
\end{equation}
was originally introduced to model spinodal decomposition and coarsening 
phenomena in binary alloys, see \cite{Cahn61,CahnH58}.
Here $u$ is defined to be the difference of the 
local concentrations of the two components of an alloy
and hence $u$ is restricted to lie in the interval $[-1,1]$. More recently,
the Cahn--Hilliard equation has been used e.g.\ as a phase field
approximation  for
sharp interface evolutions and to study phase transitions and interface 
dynamics in multiphase fluids, see e.g.\
\cite{CaginalpC98,voids,KimKL04,AbelsGG12} 
and the references therein. We note that 
with $\theta = 1$ and $b(u)=1$ in (\ref{eq:CH}) 
in the limit $\epsilon\to0$,
we recover the well known sharp interface motion by Mullins--Sekerka, whereas
$\theta = \epsilon$ and $b(u)=1-u^2$ leads to 
surface diffusion; see below for details.

The theory of Cahn and Hilliard is based on the following Ginzburg--Landau
free energy 
$${\cal E}(u):= \int_\Omega \tfrac\epsilon2\, |\nabla\, u|^2 +
\epsilon^{-1}\,\Psi(u) \dx\,,
$$
where $\epsilon>0$ is a parameter and a measure for the interfacial thickness
and $\Omega \subset \R^d$, $d=2,3$, is a given domain.
The first term in the free energy penalizes large gradients
and the second term is the homogeneous free energy.
In this paper, we consider the so-called zero temperature ``deep quench''
limit, where a possible choice is
\begin{equation} 
\Psi(u):= \begin{cases}
\textstyle \frac 12 \left(1-u^2\right)  & |u|\leq 1\,,\\
\infty & |u|> 1\,,
\end{cases} 
\qquad\text{with}\quad
\cPsi := \int_{-1}^1 \sqrt{2\,\Psi(s)}\;{\rm d}s = \tfrac\pi2\,,
\label{eq:obst}
\end{equation}
see \cite{BloweyE92,BarrettBG99}. Clearly the obstacle potential 
$\Psi$ is not differentiable at
$\pm1$. Hence, whenever we write $\Psi'(u)$ in this paper we mean that the
expression holds only for $|u|<1$, and that in general a variational inequality
needs to be employed.

We note that (\ref{eq:CH}) can be derived from mass balance
considerations as a gradient flow for the free energy $\mathcal{E}(u)$,
with the chemical potential $w := \frac{\delta \mathcal{E}}{\delta u}$ 
being the variational derivative of the energy $\mathcal{E}$ with respect to 
$u$.
We remark that evolutions of (\ref{eq:CH}) lead to structures consisting of
bulk regions in which $u$ takes the values $\pm1$, and separating
these regions there will be interfacial transition layers across which $u$
changes rapidly from one bulk value to the other. 
With the help of formal asymptotics it can be shown that the width of
these layers is approximately $\epsilon\,\pi$; see e.g.\ 
\cite{Caginalp86,CahnEN-C96,GarckeNS98}. 

In this paper we want to consider an anisotropic variant of ${\cal E}(u)$, and
hence of (\ref{eq:CH}). To this end, we introduce the anisotropic density
function $\gamma : \R^d \to \R_{\geq 0}$ with 
$\gamma\in C^2(\R^d \setminus \{ 0 \}) \cap C(\R^d)$ which is assumed to be 
absolutely homogeneous of degree one, i.e.\
\begin{equation}
\gamma(\lambda\,\vec{p}) = |\lambda|\gamma(\vec{p}) \quad \forall \
\vec{p}\in \R^d,\ \forall\ \lambda \in\R \quad \Rightarrow
\quad \gamma'(\vec{p})\,.\,\vec{p} = \gamma(\vec{p})
\quad \forall\ \vec{p}\in \R^d\setminus\{\vec0\}, \label{eq:homo}
\end{equation}
where $\gamma'$ is the gradient of $\gamma$.
Then the anisotropy function defined as
\begin{equation}
A(\vec p) = \tfrac12\,|\gamma(\vec p)|^2 \qquad \forall\ \vec p \in \R^d\,,
\label{eq:Ap}
\end{equation}
is absolutely homogeneous of degree two and 
gives rise to the following anisotropic Ginzburg--Landau
free energy 
\begin{equation} \label{eq:Eg}
{\cal E}_\gamma(u) := \int_\Omega \tfrac\epsilon2\,|\gamma(\nabla\, u)|^2 +
\epsilon^{-1}\,\Psi(u) \dx
\equiv \int_\Omega \epsilon\, A(\nabla\, u) + 
\epsilon^{-1}\,\Psi(u) \dx
\,;
\end{equation}
see e.g.\ \cite{Elliott97,ElliottS96,DeckelnickDE05}. Note that
$\mathcal{E}_\gamma$ reduces to $\mathcal{E}$ in the isotropic case, i.e.\
when $\gamma$ satisfies
\begin{equation} \label{eq:iso}
\gamma(\vec p) = |\vec p| \qquad \forall\ \vec p \in \R^d\,.
\end{equation}
In this paper, we will only consider smooth and convex anisotropies, i.e.\
they satisfy
\begin{equation}
\gamma'(\vec{p})\,.\,\vec{q} \leq \gamma(\vec{q})
\qquad \forall\ \vec p \in \R^d\setminus\{\vec 0\}\,, \vec q \in \R^d\,,
\label{eq:gest}
\end{equation}
which, on recalling (\ref{eq:homo}), is equivalent to
\begin{equation} \label{eq:convex}
\gamma(\vec{p}) + \gamma'(\vec{p})\,.\,(\vec{q}-\vec{p})\leq\gamma(\vec{q})
\qquad \forall\ \vec p \in \R^d\setminus\{\vec 0\}\,, \vec q \in \R^d\,.
\end{equation}

Together with initial and natural boundary conditions, the anisotropic
Ginzburg--Landau energy (\ref{eq:Eg}) yields the following 
anisotropic Cahn--Hilliard equation:
\begin{subequations}
\begin{alignat}{2}
\theta\,
\frac{\partial u}{\partial t} & = 
\nabla \,.\, (b(u)\,\nabla\, w )
&& \mbox{in} \;\;\Omega_T:=\Omega\times(0,T)\,, \label{eq:CHa} \\ 
\tfrac12\,\cPsi\,\alpha^{-1}\,w & 
= -\epsilon\,\nabla \,.\, A'(\nabla\, u) +\epsilon^{-1}\,\Psi'(u) 
\qquad && \mbox{in} \;\;\Omega_T\,, \label{eq:CHb} \\ 
\frac{\partial u}{\partial \vec\nu} & = 0\,, \qquad
b(u)\,\frac{\partial w}{\partial \vec\nu} = 0 
&& \mbox{on}\;\;\partial \Omega\times(0,T)\,, \label{eq:CHc} \\
\qquad u(\cdot,0) & = u_0 && \mbox{in} \;\;\Omega\,, \label{eq:CHd}
\end{alignat}
\end{subequations}
where $\theta, \alpha \in \R_{>0}$ with
$\alpha$ being a factor relating to surface tension in the sharp interface 
limit,
and where $\vec\nu$ is the outer normal to $\partial \Omega$. Moreover,
$u_0 : \Omega \to \R$ is some initial data satisfying $|u_0| \leq 1$.

An alternative to the no-flux boundary conditions (\ref{eq:CHc}) are the
conditions
\begin{equation} \label{eq:CHDbc}
\frac{\partial u}{\partial \vec\nu} = 0\,, \qquad
w = g \qquad \mbox{on}\;\;\partial \Omega\times(0,T)\,, 
\end{equation}
which are relevant in the modelling of crystal growth. Here in general
$g\in H^{\frac12}(\partial\Omega)$, but for simplicity we assume that
$g \equiv \wD \in \R$ throughout this paper.

The anisotropic Allen--Cahn equation based on (\ref{eq:Eg}) 
is given by
\begin{subequations}
\begin{alignat}{2}
\epsilon\,
\frac{\partial u}{\partial t} & = 
\epsilon\,\nabla \,.\, A'(\nabla\, u) -\epsilon^{-1}\,\Psi'(u) 
\qquad && \mbox{in} \;\;\Omega_T\,, \label{eq:ACa} \\ 
\frac{\partial u}{\partial \vec\nu} & = 0\,, 
&& \mbox{on}\;\;\partial \Omega\times(0,T)\,, \label{eq:ACb} \\
\qquad u(\cdot,0) & = u_0 && \mbox{in} \;\;\Omega\,. \label{eq:ACc}
\end{alignat}
\end{subequations}
It was shown in \cite{ElliottS96,AlfaroGHMS10} that as $\epsilon\to0$ the 
zero level sets of $u$
converge to a sharp interface $\Gamma$ which moves by anisotropic mean 
curvature flow, i.e.\
\begin{equation} \label{eq:aMC}
\frac1{\gamma(\unitn)}\mathcal{V} =
\kappa_\gamma\,,
\end{equation}
where $\mathcal{V}$ is the velocity of $\Gamma$ in the direction of its normal
$\unitn$, and where $\kappa_\gamma$ is the anisotropic mean curvature
of $\Gamma$ with respect to the anisotropic surface energy
\begin{equation} \label{eq:EGamma}
\int_\Gamma \gamma(\unitn) \ds \,.
\end{equation}
In particular, $\kappa_\gamma$ is defined as the first variation of the 
above energy, which can be computed as 
$$
\kappa_\gamma := -\nabla_s\,.\,\gamma'(\vec{\unitn})\,,
$$
i.e.\ $\ddt \int_{\Gamma(t)} \gamma(\vec{\unitn}) \ds =
- \int_{\Gamma(t)} \kappa_\gamma\,\mathcal{V}\ds$;
where $\nabla_s\,.\,$ is the tangential divergence on $\Gamma$;
see e.g.\ \cite{CahnH74,TaylorCH92,Davis01}.

Similarly, with the help of formal asymptotics, see e.g.\
\cite{McFaddenWBCS93,WheelerM96,BellettiniP96,CaginalpE05}, 
it can be shown that the sharp
interface limit of (\ref{eq:CHa}--d) with $\theta = 1$ and $b(u) = b_0$,
with $b_0 \in \R_{>0}$,
is given by the following Mullins--Sekerka problem
\begin{subequations}
\begin{alignat}{2}
0 & = \Delta\, w && \qquad \mbox{in }\ 
\Omega_\pm\,,\label{eq:MSa} \\
b_0\, \left[\frac{\partial w}{\partial \unitn} \right]_-^+ & 
= - 2\,\mathcal{V}
&& \qquad \mbox{on }\ \Gamma(t)\,, \label{eq:MSb} \\
w & = \alpha\,\kappa_\gamma
&& \qquad \mbox{on }\ \Gamma(t)\,,
\label{eq:MSc} \\
\frac{\partial w}{\partial \vec\nu} & = 0 &&
\qquad \mbox{on } \partial \Omega\,,
\label{eq:MSd}
\end{alignat}
\end{subequations}
where $\Omega_\pm$ denote the domains occupied by the two phases,
$\Gamma = (\partial\Omega_+) \cap \Omega$ is the interface and
$[\cdot]_-^+$ denotes the jump across $\Gamma$ and with $\unitn$ pointing into
the set $\Omega_+$. 
Of course, if the natural boundary conditions 
(\ref{eq:CHc}) are changed to (\ref{eq:CHDbc}), then the limiting motion
becomes (\ref{eq:MSa}--c) together with
\begin{equation} \label{eq:wD}
w = \wD \qquad \mbox{on}\;\;\partial \Omega\,.
\end{equation}
The problem (\ref{eq:MSa}--c), (\ref{eq:wD})
models the supercooling of a molten pure substance, with $w$ playing the role
of a (rescaled) temperature. Then (\ref{eq:MSb}) is the so-called Stefan
condition, (\ref{eq:MSc}) is the anisotropic Gibbs--Thomson law without kinetic
undercooling and (\ref{eq:wD}) prescribes the supercooling at the boundary.
Here we note that in the quasi-static regime the heat diffusion was reduced to
Laplace's equation in $\Omega_\pm$ in (\ref{eq:MSa}).

On replacing heat diffusion with particle diffusion, the model 
(\ref{eq:MSa}--c), (\ref{eq:wD}), with $w$ now representing a (rescaled)
particle concentration, is relevant in isothermal crystal growth
where a density change occurs at the interfacer, see e.g.\
\cite{dendritic,crystal,jcg} and the references therein. We remark that in
order to recover the anisotropic Gibbs--Thomson law with kinetic
undercooling in place of (\ref{eq:MSc}) a viscous Cahn--Hilliard equation needs
to be considered, see e.g.\ \cite{BaiEGSS95}. We will look at this in more
detail in the forthcoming article \cite{vch}.

For later use we remark that a solution to (\ref{eq:MSa}--c), (\ref{eq:wD})
satisfies the energy identity
\begin{equation} \label{eq:Lyap}
\ddt\left(2\,\alpha\,\int_\Gamma \gamma(\unitn) \ds 
-2\,\wD\,\vol(\Omega_+)\right) + b_0\,\int_\Omega |\nabla\,w|^2 \dx = 0\,,
\end{equation}
see e.g.\ \cite{dendritic}, 
which is the sharp interface analogue of the corresponding formal
phase field energy bound 
\begin{equation} \label{eq:pfLyap}
\ddt\left(2\,\alpha\,\frac1\cPsi\,\mathcal{E}_\gamma(u) 
- \wD\,\int_\Omega u \dx \right) 
+ b_0\,\int_\Omega |\nabla\,w|^2 \dx \leq 0
\end{equation}
for the anisotropic Cahn--Hilliard equation (\ref{eq:CHa},b), (\ref{eq:CHDbc}) 
with $\theta=1$ and $b(u) = b_0$.

Lastly, the formal asymptotic limit of (\ref{eq:CHa}--d) with
$\theta=\epsilon$ and $b(u) = 1 - u^2$ is given by anisotropic surface
diffusion, i.e.\
\begin{equation} \label{eq:aSD}
\mathcal{V} = - \tfrac12\,\cPsi\,\alpha\,\Delta_s\,\kappa_\gamma\,,
\end{equation}
where $\Delta_s$ is the Laplace--Beltrami operator on $\Gamma$. The limit
(\ref{eq:aSD}) in the isotropic case (\ref{eq:iso}) was formally derived in 
\cite{CahnEN-C96}, and together with the techniques in e.g.\ 
\cite{WheelerM96,ElliottS96,GarckeNS98}
the anisotropic limit (\ref{eq:aSD}) is easily established.
More details on
the interpretation of anisotropic sharp interface motions as gradient flows for
(\ref{eq:EGamma}) and on their phase field equivalents can be found in
\cite{TaylorC94}.

It is the aim of this paper to introduce unconditionally stable finite element
approximations for the phase field models (\ref{eq:CHa}--d) and 
(\ref{eq:ACa}--c). Based on earlier work by the authors in the context of the
parametric approximation of anisotropic geometric evolution equations
\cite{triplejANI,ani3d}, the crucial idea here is to restrict the class of
anisotropies under consideration. 
The special structure of the chosen anisotropies
can then be exploited to develop discretizations that are stable without the
need for a regularization parameter and without a restriction on the time step 
size. 
In particular, the class of anisotropies that we will consider in this paper
is given by
\begin{equation} \label{eq:g1}
\gamma(\vec{p}) = \sum_{\ell=1}^L
\gamma_{\ell}(\vec{p}), \quad
\gamma_\ell(\vec{p}):= [{\vec{p}\,.\,G_{\ell}\,\vec{p}}]^\frac12\,,
\qquad \forall\ \vec p \in \R^d
\,,
\end{equation}
where $G_{\ell} \in \R^{d\times d}$, for $\ell=1\to L$, 
are symmetric and positive definite matrices. 
We note that (\ref{eq:g1}) corresponds to the special choice $r=1$ for the
class of anisotropies
\begin{equation} \label{eq:g}
\gamma(\vec{p}) = \left(\sum_{\ell=1}^L
[\gamma_{\ell}(\vec{p})]^r\right)^{\frac1r}
\qquad \forall\ \vec p \in \R^d 
\,,\qquad r \in [1,\infty)\,,
\end{equation}
which has been considered by the authors in \cite{ani3d,dendritic}. We remark
that anisotropies of the form (\ref{eq:g}) are always strictly convex norms.
In particular, they satisfy (\ref{eq:convex}); see Lemma~\ref{lem:1} below.
However, despite this seemingly restrictive choice, it is possible with
(\ref{eq:g}) to model and approximate a wide variety of anisotropies that are
relevant in materials science. For the sake of brevity, we refer to the
exemplary Wulff shapes in the authors' previous papers 
\cite{triplejANI,ani3d,clust3d,dendritic,ejam3d,crystal}. 
As we restrict ourselves to the class of anisotropies
(\ref{eq:g1}) in this paper, all of the numerical schemes introduced in 
Section~\ref{sec:3}, below, will feature only linear equations and linear 
variational inequalities.
The numerical approximation of anisotropic phase field models for the
class of anisotropies (\ref{eq:g}) is more involved, and we will
consider this in the forthcoming article \cite{vch}.

Let us shortly review previous work on the numerical analysis of
discretizations of anisotropic Allen--Cahn and Cahn--Hilliard 
phase field models. 
Fully explicit and nonlinear semi-implicit approximations of the Allen--Cahn
equation (\ref{eq:ACa}--c) are discussed in \cite[\S8]{DeckelnickDE05}.
In \cite{GraserKS11} several
time discretizations for (\ref{eq:ACa}--c) 
are considered, and
unconditional stability is shown for highly nonlinear, implicit
discretizations. Semi-implicit linearized discretizations are conditionally
stable on choosing a regularization parameter sufficiently large.
Moreover, numerical results for anisotropic Allen--Cahn equations have been
obtained in e.g.\ \cite{Paolini98,GarckeNS99,Benes03,Benes07}.
With particular reference to dendritic and crystal growth we mention
e.g.\ \cite{Kobayashi93,ElliottG96,KarmaR96,KarmaR98,DebierreKCG03}, 
where a forced anisotropic 
Allen--Cahn equation is coupled to a heat equation for the temperature.
We also mention the contributions of Christof Eck \cite{Eck04a,Eck04b,EckGS06},
who introduced homogenization methods into the field of crystal growth.

As far as we are aware, the presented paper includes the first numerical
analysis for an approximation of the anisotropic Cahn--Hilliard equation 
(\ref{eq:CHa},b). Finally, we mention that numerical computations for a 
generalized, sixth order Cahn--Hilliard equation, which is based on
a higher order regularization of the energy (\ref{eq:Eg}) in the case of a
non-convex anisotropy density function $\gamma$,
can be found in e.g.\ \cite{WiseKL07,LiLRV09}. 

The remainder of the paper is organized as follows. In Section~\ref{sec:2}
we consider a stable linearization of the gradient $A'$ for the anisotropy
function (\ref{eq:Ap}) and (\ref{eq:g1}). This will lay
the foundations for the stable finite element approximations introduced in
Section~\ref{sec:3}. 
Finally we present some numerical results in Section~\ref{sec:4}.

\section{Stable Linearization of $A'$} \label{sec:2}
The analysis in this paper is based on the special form (\ref{eq:g1}) of
$\gamma$. Note that for $\gamma$ satisfying (\ref{eq:g1}) it holds that
\begin{equation} \label{eq:Aprime}
A'(\vec p) = \gamma(\vec p)\,\gamma'(\vec p) 
\,, \qquad\text{where}\quad \gamma'(\vec p) = 
\sum_{\ell = 1}^L [\gamma_\ell(\vec p)]^{-1}\,G_\ell\,\vec p
\qquad \forall\ \vec p \in \R^d\setminus\{\vec 0\}\,.
\end{equation}

For later use we recall 
the elementary identity 
\begin{equation} \label{eq:element}
2\,r\,(r-s) = r^2 - s^2 + (r-s)^2\,.
\end{equation}

\begin{lem}\label{lem:1}
Let $\gamma$ be of the form (\ref{eq:g1}). Then $\gamma$ is convex and
the anisotropic operator $A$ satisfies
\begin{alignat}{2} \label{eq:mono}
 A'(\vec p) \,.\, (\vec p-\vec q) & \geq 
 \gamma(\vec p)\,[\gamma(\vec p)-\gamma(\vec q) ]
 \qquad && \forall\ \vec p \in \R^d\setminus\{\vec 0\}\,, \vec q \in \R^d\,, \\
 A(\vec p) & \leq \tfrac12\,\gamma(\vec q)\,
 \sum_{\ell=1}^L [\gamma_\ell(q)]^{-1}\,[\gamma_\ell(p)]^2 
 \qquad && \forall\ \vec p \in \R^d\,, \vec q \in \R^d\setminus\{\vec 0\}. 
 \label{eq:CS}
\end{alignat}
\end{lem}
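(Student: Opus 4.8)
The plan is to reduce all three assertions to the Cauchy--Schwarz inequality in the inner products induced by the matrices $G_\ell$. The unifying observation is that, since each $G_\ell$ is symmetric and positive definite, $\langle \vec p, \vec q\rangle_\ell := \vec p\,.\,G_\ell\,\vec q$ is an inner product on $\R^d$ whose induced norm is exactly $\gamma_\ell$. Convexity of $\gamma$ is then immediate, since $\gamma = \sum_{\ell=1}^L \gamma_\ell$ is a sum of norms and each norm is convex. The estimate that will do the real work is the associated Cauchy--Schwarz bound
\begin{equation*}
\vec p\,.\,G_\ell\,\vec q = \langle \vec p,\vec q\rangle_\ell \leq \gamma_\ell(\vec p)\,\gamma_\ell(\vec q) \qquad \forall\ \vec p,\vec q \in \R^d\,.
\end{equation*}

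To obtain the monotonicity estimate (\ref{eq:mono}) I would first record, from (\ref{eq:Aprime}) and the above bound, that for $\vec p \neq \vec 0$
\begin{equation*}
\gamma'(\vec p)\,.\,\vec q = \sum_{\ell=1}^L [\gamma_\ell(\vec p)]^{-1}\,\vec p\,.\,G_\ell\,\vec q \leq \sum_{\ell=1}^L \gamma_\ell(\vec q) = \gamma(\vec q)\,,
\end{equation*}
which is precisely (\ref{eq:gest}) and, recalling (\ref{eq:homo}), reproves convexity in the form (\ref{eq:convex}). Combining $A'(\vec p) = \gamma(\vec p)\,\gamma'(\vec p)$ with the homogeneity identity $\gamma'(\vec p)\,.\,\vec p = \gamma(\vec p)$ from (\ref{eq:homo}) gives $A'(\vec p)\,.\,(\vec p - \vec q) = \gamma(\vec p)\,[\gamma(\vec p) - \gamma'(\vec p)\,.\,\vec q]$, and since $\gamma(\vec p) \geq 0$ the bound $\gamma'(\vec p)\,.\,\vec q \leq \gamma(\vec q)$ yields (\ref{eq:mono}) at once.

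For the Cauchy--Schwarz type bound (\ref{eq:CS}) I would instead work with the scalar quantities $\gamma_\ell(\vec p)$ and $\gamma_\ell(\vec q)$. Writing $A(\vec p) = \tfrac12\,[\gamma(\vec p)]^2 = \tfrac12\,\big(\sum_{\ell=1}^L \gamma_\ell(\vec p)\big)^2$ and applying the discrete Cauchy--Schwarz inequality with the splitting $\gamma_\ell(\vec p) = \big(\gamma_\ell(\vec p)\,[\gamma_\ell(\vec q)]^{-\frac12}\big)\,[\gamma_\ell(\vec q)]^{\frac12}$ produces
\begin{equation*}
\Big(\sum_{\ell=1}^L \gamma_\ell(\vec p)\Big)^2 \leq \Big(\sum_{\ell=1}^L [\gamma_\ell(\vec p)]^2\,[\gamma_\ell(\vec q)]^{-1}\Big)\Big(\sum_{\ell=1}^L \gamma_\ell(\vec q)\Big)\,,
\end{equation*}
and multiplying by $\tfrac12$ gives (\ref{eq:CS}).

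I do not anticipate a genuine obstacle; this is essentially Cauchy--Schwarz applied twice. The only point requiring care is the treatment of the origin, where $\gamma'$ is undefined and the weights $[\gamma_\ell(\cdot)]^{-1}$ blow up. The hypotheses are tailored precisely to this: $\vec p \neq \vec 0$ in (\ref{eq:mono}) makes $\gamma'(\vec p)$ legitimate, while $\vec q \neq \vec 0$ in (\ref{eq:CS}) guarantees $\gamma_\ell(\vec q) > 0$ for every $\ell$ by positive definiteness of $G_\ell$. The admissible boundary case $\vec q = \vec 0$ in (\ref{eq:mono}) then simply holds with equality, since there $A'(\vec p)\,.\,\vec p = [\gamma(\vec p)]^2 = \gamma(\vec p)\,[\gamma(\vec p) - \gamma(\vec 0)]$.
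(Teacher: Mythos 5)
Your proposal is correct and follows essentially the same route as the paper: both establish $\gamma'(\vec p)\,.\,\vec q \leq \gamma(\vec q)$ via the Cauchy--Schwarz inequality in the $G_\ell$-inner products and then multiply by $\gamma(\vec p)$ to obtain (\ref{eq:mono}), and both prove (\ref{eq:CS}) by the identical weighted splitting $\gamma_\ell(\vec p) = [\gamma_\ell(\vec q)]^{\frac12}\,\gamma_\ell(\vec p)\,[\gamma_\ell(\vec q)]^{-\frac12}$ in the discrete Cauchy--Schwarz inequality. Your extra remarks (convexity of $\gamma$ as a sum of norms, and the explicit check of the case $\vec q = \vec 0$ in (\ref{eq:mono})) are harmless refinements of the same argument.
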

\begin{proof}
We first prove (\ref{eq:gest}). It follows from (\ref{eq:Aprime}) and a 
Cauchy--Schwarz inequality that
\begin{align*}
\gamma'(\vec p) \,.\,\vec q = 
\sum_{\ell = 1}^L [\gamma_\ell(\vec p)]^{-1}\,(G_\ell\,\vec p)\,.\,\vec q
\leq \sum_{\ell = 1}^L \gamma_\ell(\vec q) = \gamma(\vec q)
\qquad \forall\ \vec p \in \R^d\setminus\{\vec 0\}\,, \vec q \in \R^d\,.
\end{align*}
Together with (\ref{eq:homo}) this implies (\ref{eq:convex}), i.e.\ $\gamma$ is
convex. Multiplying (\ref{eq:convex}) with $\gamma(\vec p)$ yields the desired 
result (\ref{eq:mono}).
Moreover, we have from a Cauchy--Schwarz inequality that 
\begin{equation*} 
\gamma(\vec p) = \sum_{\ell = 1}^L 
[\gamma_\ell(q)]^{\frac12}\,
\frac{\gamma_\ell(p)}{[\gamma_\ell(q)]^{\frac12}} \leq
[\gamma(\vec q)]^\frac12
\left( \sum_{\ell = 1}^L \frac{[\gamma_\ell(p)]^2}{\gamma_\ell(q)} 
\right)^{\frac12} 
\qquad \forall\ \vec p \in \R^d\,, \vec q \in \R^d\setminus\{\vec 0\}\,.
\end{equation*}
This immediately yields the desired result (\ref{eq:CS}), on recalling 
(\ref{eq:Ap}).
\end{proof}

Our aim now is to replace the highly nonlinear operator 
$A'(\vec p):\R^d\to\R^d$ in 
(\ref{eq:Aprime}) with a linearized approximation that still maintains the
crucial monotonicity property (\ref{eq:mono}). It turns out that the
natural linearization is already given in (\ref{eq:Aprime}). 
In particular, we let  
\begin{equation} \label{eq:B}
B(\vec q) := \begin{cases}
\gamma(\vec q)\,\displaystyle\sum_{\ell = 1}^L [\gamma_\ell(\vec q)]^{-1}\,G_\ell
& \vec q \not= \vec 0\,, \\
L\,\displaystyle\sum_{\ell = 1}^L G_\ell & \vec q = \vec 0\,.
\end{cases}
\end{equation}
Clearly it holds that
$$
B(\vec p)\,\vec p = A'(\vec p) 
\qquad \forall\ \vec p \in \R^d\setminus\{\vec 0\}\,,
$$
and it turns out that approximating $A'(\vec p)$ with $B(\vec q)\,\vec p$
maintains the monotonicity property (\ref{eq:mono}).

\begin{lem} \label{lem:B}
Let $\gamma$ be of the form (\ref{eq:g1}). Then it holds that
\begin{equation} \label{eq:Bmono}
[B(\vec q)\,\vec p]\,.\,(\vec p-\vec q) 
\geq \gamma(p)\,\left[\gamma(p)-\gamma(q)\right] 
\qquad \forall\ \vec p \,, \vec q \in \R^d \,.
\end{equation}
\end{lem}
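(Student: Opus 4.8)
The plan is to reduce (\ref{eq:Bmono}) to the estimate (\ref{eq:CS}) of Lemma~\ref{lem:1}, supplemented by the same Cauchy--Schwarz bound that was used in its proof. I would first dispose of the nondegenerate case $\vec q \neq \vec 0$ and split the left-hand side as $[B(\vec q)\,\vec p]\,.\,(\vec p - \vec q) = [B(\vec q)\,\vec p]\,.\,\vec p - [B(\vec q)\,\vec p]\,.\,\vec q$, bounding the two terms from below and above respectively.

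For the first term, inserting the definition (\ref{eq:B}) and using $(G_\ell\,\vec p)\,.\,\vec p = [\gamma_\ell(\vec p)]^2$ gives
$$
[B(\vec q)\,\vec p]\,.\,\vec p
= \gamma(\vec q)\sum_{\ell=1}^L [\gamma_\ell(\vec q)]^{-1}\,(G_\ell\,\vec p)\,.\,\vec p
= \gamma(\vec q)\sum_{\ell=1}^L [\gamma_\ell(\vec q)]^{-1}\,[\gamma_\ell(\vec p)]^2\,,
$$
which is exactly the quantity appearing on the right of (\ref{eq:CS}); recalling $A(\vec p)=\tfrac12\,[\gamma(\vec p)]^2$ from (\ref{eq:Ap}), that estimate yields the lower bound $[\gamma(\vec p)]^2$. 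For the second term, I would apply, for each $\ell$, the Cauchy--Schwarz inequality in the inner product induced by the symmetric positive definite matrix $G_\ell$, namely $(G_\ell\,\vec p)\,.\,\vec q \leq \gamma_\ell(\vec p)\,\gamma_\ell(\vec q)$, exactly as in the proof of Lemma~\ref{lem:1}, to obtain the upper bound
$$
[B(\vec q)\,\vec p]\,.\,\vec q
\leq \gamma(\vec q)\sum_{\ell=1}^L \gamma_\ell(\vec p)
= \gamma(\vec p)\,\gamma(\vec q)\,.
$$
Subtracting, the mixed terms $\gamma(\vec p)\,\gamma(\vec q)$ cancel and I am left precisely with $[\gamma(\vec p)]^2 - \gamma(\vec p)\,\gamma(\vec q) = \gamma(\vec p)\,[\gamma(\vec p)-\gamma(\vec q)]$, which is (\ref{eq:Bmono}).

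It then remains to check the degenerate case $\vec q = \vec 0$, where $\gamma(\vec q)=0$ and the right-hand side of (\ref{eq:Bmono}) reduces to $[\gamma(\vec p)]^2$. Here the second branch of (\ref{eq:B}) gives $[B(\vec 0)\,\vec p]\,.\,\vec p = L\sum_{\ell=1}^L [\gamma_\ell(\vec p)]^2$, so the claim becomes the elementary inequality $\left(\sum_{\ell=1}^L \gamma_\ell(\vec p)\right)^2 \leq L\sum_{\ell=1}^L [\gamma_\ell(\vec p)]^2$, which is Cauchy--Schwarz applied to the all-ones vector and $(\gamma_\ell(\vec p))_{\ell=1}^L$. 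I do not anticipate a genuine obstacle here: the whole argument rests on recognizing that the $\vec p\,.\,\vec p$ contribution is controlled verbatim by (\ref{eq:CS}), so that no estimate beyond Lemma~\ref{lem:1} is needed when $\vec q\neq\vec 0$; the only step requiring separate attention is the $\vec q=\vec 0$ case, and that too is immediate.
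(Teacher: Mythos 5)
Your proof is correct and follows essentially the same route as the paper: both handle $\vec q \neq \vec 0$ by applying the per-$\ell$ Cauchy--Schwarz bound $(G_\ell\,\vec p)\,.\,\vec q \leq \gamma_\ell(\vec p)\,\gamma_\ell(\vec q)$ to the mixed term and the estimate (\ref{eq:CS}) to the quadratic term, and both settle $\vec q = \vec 0$ by the $\ell^1$--$\ell^2$ Cauchy--Schwarz inequality. The only difference is cosmetic: you split $[B(\vec q)\,\vec p]\,.\,(\vec p-\vec q)$ into its two dot-product contributions before estimating, whereas the paper keeps $(\vec p - \vec q)$ together inside the sum and applies the same two inequalities in sequence.
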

\begin{proof}
Let $\vec p \in \R^d$.
If $\vec q \neq \vec 0$ it holds, on recalling (\ref{eq:CS}), that
\begin{align*}
[B(\vec q)\,\vec p]\,.\,(p-q) &= 
\gamma(q)\, \sum_{\ell=1}^L [\gamma_\ell(q)]^{-1}\,(p-q)\,.\,G_\ell\,p 
\geq \gamma(q) \,\sum_{\ell=1}^L 
 \gamma_\ell(p)\,([\gamma_\ell(q)]^{-1}\,\gamma_\ell(p) - 1) \\
& = \gamma(q) \,\sum_{\ell=1}^L [\gamma_\ell(q)]^{-1}\,[\gamma_\ell(p)]^2 
 - \gamma(q)\, \gamma(p) 
\geq \gamma(p)\,\left[\gamma(p)-\gamma(q)\right] .
\end{align*}
If $\vec q = \vec 0$, on the other hand, then it follows from a Cauchy--Schwarz
inequality that
\begin{align*}
[B(\vec q)\, \vec p]\,.\,(p-q) & = [B(\vec q)\,\vec p]\,.\,p = 
L\,\sum_{\ell=1}^L p \,.\,G_\ell\, p 
= L \sum_{\ell=1}^L [\gamma_\ell(\vec p)]^2 
\geq \left( \sum_{\ell=1}^L \gamma_\ell(\vec p)\right)^2 = [\gamma(p)]^2\,.
\end{align*}
\end{proof}

\begin{cor} \label{cor:B}
Let $\gamma$ be of the form (\ref{eq:g1}). Then it holds that
\begin{equation} \label{eq:Bstab}
[B(\vec q)\, \vec p]\,.\,(\vec p-\vec q) 
\geq A(\vec p) - A(q) 
\qquad \forall\ \vec p \,, \vec q \in \R^d \,.
\end{equation}
\end{cor}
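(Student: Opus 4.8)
The plan is to derive the desired lower bound (\ref{eq:Bstab}) by combining the monotonicity estimate (\ref{eq:Bmono}) already established in Lemma~\ref{lem:B} with a suitable upper bound on $A(\vec q)$. The quantity we start from, $[B(\vec q)\,\vec p]\,.\,(\vec p-\vec q)$, is bounded below by $\gamma(p)\,[\gamma(p)-\gamma(q)]$, so it suffices to show that this latter expression dominates $A(\vec p)-A(\vec q)$, i.e.\ that
\begin{equation*}
\gamma(p)\,[\gamma(p)-\gamma(q)] \geq A(\vec p)-A(\vec q) = \tfrac12\,[\gamma(\vec p)]^2 - \tfrac12\,[\gamma(\vec q)]^2\,,
\end{equation*}
where the final equality is just the definition (\ref{eq:Ap}). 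Thus the whole corollary reduces to a purely scalar inequality in the two nonnegative real numbers $r:=\gamma(\vec p)$ and $s:=\gamma(\vec q)$.

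The key step is to recognize that this scalar inequality is precisely the elementary identity (\ref{eq:element}) in disguise. Setting $r=\gamma(\vec p)$ and $s=\gamma(\vec q)$, the identity $2\,r\,(r-s)=r^2-s^2+(r-s)^2$ rearranges to
\begin{equation*}
r\,(r-s) = \tfrac12\,(r^2-s^2) + \tfrac12\,(r-s)^2 \geq \tfrac12\,(r^2-s^2)\,,
\end{equation*}
since the term $\tfrac12\,(r-s)^2$ is nonnegative. Translating back, this reads $\gamma(p)\,[\gamma(p)-\gamma(q)] \geq \tfrac12\,[\gamma(\vec p)]^2 - \tfrac12\,[\gamma(\vec q)]^2 = A(\vec p)-A(\vec q)$, which is exactly the intermediate bound we needed.

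I would therefore carry out the argument in two short moves: first invoke Lemma~\ref{lem:B} to pass from $[B(\vec q)\,\vec p]\,.\,(\vec p-\vec q)$ down to $\gamma(p)\,[\gamma(p)-\gamma(q)]$, and then apply the rearrangement of (\ref{eq:element}) above to pass from $\gamma(p)\,[\gamma(p)-\gamma(q)]$ down to $A(\vec p)-A(\vec q)$. Chaining these two inequalities yields (\ref{eq:Bstab}) for all $\vec p,\vec q\in\R^d$, with no case distinction required beyond what Lemma~\ref{lem:B} already handles. There is essentially no obstacle here: the statement is a clean corollary whose only content is spotting that the gap in the monotonicity estimate is controlled by the square term $\tfrac12\,(r-s)^2$ supplied by the pre-recorded identity (\ref{eq:element}). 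The reason the authors placed (\ref{eq:element}) just before the lemmas becomes clear precisely at this point.
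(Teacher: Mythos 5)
Your proposal is correct and follows exactly the paper's own route: the authors' proof of Corollary~\ref{cor:B} consists precisely of invoking Lemma~\ref{lem:B} and then applying the identity (\ref{eq:element}) with $r=\gamma(\vec p)$, $s=\gamma(\vec q)$, which is what you have spelled out in detail. Your write-up merely makes explicit the one-line argument the paper leaves to the reader.
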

\begin{proof}
The desired result follows immediately from Lemma~\ref{lem:B} on noting the
elementary identity (\ref{eq:element}). 
\end{proof}

\section{Finite Element Approximations} \label{sec:3}

Let $\{{\cal T}^h\}_{h>0}$ be a family of partitionings of $\Omega$ into
disjoint open simplices $\sigma$ with $h_{\sigma}:={\rm diam}(\sigma)$
and $h:=\max_{\sigma \in {\cal T}^h}h_{\sigma}$, so that
$\overline{\Omega}=\cup_{\sigma\in{\cal T}^h}\overline{\sigma}$.
Associated with ${\cal T}^h$ is the finite element space
\begin{equation*} \label{eq:Sh}
 S^h :=  \{\chi \in C(\overline{\Omega}) : \chi \mid_{\sigma} \mbox{ is linear }
 \forall\ \sigma \in {\cal T}^h\} \subset H^1(\Omega).
\end{equation*}
We introduce also
\begin{align*} \label{eq:K}
 K^h & := \{\chi \in S^h : |\chi| \leq 1 \mbox{ in } \Omega \}
 \subset \mathcal{K} 
 :=\{\eta \in H^1(\Omega) : |\eta| \leq 1 \mbox{ $a.e.$ in }\Omega\}\,.
\end{align*}
Let $J$ be the set of nodes of ${\cal T}^h$ and $\{p_{j}\}_{j \in J}$ the
coordinates of these nodes.
Let $\{\chi_{j}\}_{j\in J}$ be the standard basis
functions for $S^h$; that is $\chi_{j} \in S^h$ and $\chi_j(p_{i})=\delta_{ij}$
for all $i,j \in J$.
We introduce $\pi^h:C(\overline{\Omega})\rightarrow S^h$, the interpolation
operator, such that $(\pi^h \eta)(p_j)= \eta(p_j)$ for all $j \in J$. A
discrete semi-inner product on $C(\overline{\Omega})$ is then defined by
\begin{equation*} \label{eq:dip}
 (\eta_1,\eta_2)^h := \int_\Omega \pi^h(\eta_1(x)\,\eta_2(x))\dx 
\end{equation*}
with the induced discrete semi-norm given by
$|\eta|_h := [\,(\eta,\eta)^h\,]^{\frac{1}{2}}$, for
$\eta\in C(\overline{\Omega})$. Similarly, we denote the $L^2$--inner product
over $\Omega$ by $(\cdot,\cdot)$ with the corresponding norm given by
$|\cdot|_0$.

In addition to ${\cal T}^h$, let
$0= t_0 < t_1 < \ldots < t_{N-1} < t_N = T$ be a
partitioning of $[0,T]$ into possibly variable time steps $\tau_n := t_n -
t_{n-1}$, $n=1\rightarrow N$. We set
$\tau := \max_{n=1\rightarrow N}\tau_n$. 

We then consider the following fully practical, semi-implicit finite element
approximation for (\ref{eq:CHa}--d). 
For $n \geq 1$ find $(U^n,W^n) \in K^h \times S^h$ such that
\begin{subequations}
\begin{align}
& \theta \left(\displaystyle\frac{U^{n}-U^{n-1}}{\tau_n},
 \chi \right)^h + ( \pi^h[b(U^{n-1})]
 \,\nabla\, W^{n} , \nabla\, \chi )
 = 0 \qquad \forall\ \chi \in S^h, \label{eq:U} \\
& \epsilon \, (B(\nabla\, U^{n-1})\, \nabla\, U^{n}, \nabla\, [\chi -U^n])
 \geq 
 (\tfrac12\,\cPsi\,\alpha^{-1}\,W^{n} + \epsilon^{-1}\,U^{n-1},\chi - U^n)^h
 \qquad \forall\ \chi \in K^h\,, \label{eq:W}
\end{align}
\end{subequations}
where $U^0 \in K^h$ is an approximation of $u_0 \in {\cal K}$, e.g.\
$U^0 = {\pi}^h u_0$ for $u_0 \in C(\overline{\Omega})$.

Let 
\begin{equation} \label{eq:Eh}
\mathcal{E}_\gamma^h(U) = \tfrac12\,\epsilon\,|\gamma(\nabla\, U)|_0^2 +
\epsilon^{-1}\,(\Psi(U), 1)^h \qquad \forall\ U \in S^h
\end{equation}
be the natural discrete analogue of (\ref{eq:Eg}) 
and set $b_{\min} := \min_{s \in[-1,1]} b(s)$.

\begin{thm} \label{thm:stab}
There exists a solution $(U^n,W^n) \in K^h \times S^h$ to (\ref{eq:U},b)
with $(U^n,1) = (U^{n-1},1) = (U^0,1)$, and $U^n$ is unique. Moreover, it
holds that
\begin{equation} \label{eq:stab}
\mathcal{E}_\gamma^h(U^n) + \tau_n\,(2\,\theta\,\alpha)^{-1}\,\cPsi\,
( \pi^h[b(U^{n-1})]
 \,\nabla\, W^{n} , \nabla\, W^n ) \leq \mathcal{E}_\gamma^h(U^{n-1})\,.
\end{equation}
In addition, if $b_{\min} > 0$ and if 
$|(U^0, 1)| < \int_\Omega 1 \dx$ then $W^n$ is also unique. 
\end{thm}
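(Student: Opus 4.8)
The plan is to decouple the system by solving for $U^n$ first and recovering $W^n$ afterwards. Mass conservation is immediate: taking $\chi\equiv1$ in (\ref{eq:U}) kills the flux term (since $\nabla\chi=0$), and because $(\cdot,1)^h=(\cdot,1)$ on $S^h$ this yields $(U^n,1)=(U^{n-1},1)$, hence $=(U^0,1)$ by induction. For existence I would first assume $b_{\min}>0$, so that $a(v,\chi):=(\pi^h[b(U^{n-1})]\,\nabla v,\nabla\chi)$ is coercive on the mean-zero subspace. Mass balance makes the right-hand side of (\ref{eq:U}) annihilate constants, so (\ref{eq:U}) determines $W^n$ from $U^n$ up to an additive constant via the associated discrete Green operator. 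Substituting this into (\ref{eq:W}) and testing only with $\chi$ of the prescribed mean reduces the coupled problem to the variational inequality characterising the minimiser over $K^h_m:=\{\chi\in K^h:(\chi,1)=(U^0,1)\}$ of a functional of the form $\tfrac\epsilon2(B(\nabla U^{n-1})\,\nabla V,\nabla V)+c_n\,|V-U^{n-1}|_{-1}^2-\epsilon^{-1}(U^{n-1},V)^h$ with $c_n>0$ and $|\cdot|_{-1}$ the Green-operator norm. Since each $G_\ell$ is symmetric positive definite, so is $B(\nabla U^{n-1})$; thus this functional is continuous and, because the mean constraint removes the constant kernel of the gradient term, strictly convex on the nonempty compact convex set $K^h_m$. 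It therefore attains a unique minimiser $U^n$, and the additive constant in $W^n$ is fixed as the multiplier of the single mass constraint.

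For the energy bound I would test (\ref{eq:W}) with $\chi=U^{n-1}\in K^h$ and (\ref{eq:U}) with $\chi=W^n$. The first gives $\epsilon(B(\nabla U^{n-1})\,\nabla U^n,\nabla(U^n-U^{n-1}))\le(\tfrac12\cPsi\alpha^{-1}W^n+\epsilon^{-1}U^{n-1},U^n-U^{n-1})^h$. Applying Corollary~\ref{cor:B} pointwise with $\vec p=\nabla U^n$, $\vec q=\nabla U^{n-1}$ and integrating bounds the left-hand side below by $\tfrac12|\gamma(\nabla U^n)|_0^2-\tfrac12|\gamma(\nabla U^{n-1})|_0^2$. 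For the potential term I use that the obstacle potential (\ref{eq:obst}) is concave with $\Psi'(s)=-s$, so nodewise $\epsilon^{-1}U^{n-1}(U^n-U^{n-1})\le\epsilon^{-1}[\Psi(U^{n-1})-\Psi(U^n)]$, giving $\epsilon^{-1}(U^{n-1},U^n-U^{n-1})^h\le\epsilon^{-1}[(\Psi(U^{n-1}),1)^h-(\Psi(U^n),1)^h]$. Collecting these turns the tested (\ref{eq:W}) into $\mathcal{E}_\gamma^h(U^n)\le\mathcal{E}_\gamma^h(U^{n-1})+\tfrac12\cPsi\alpha^{-1}(W^n,U^n-U^{n-1})^h$; and the tested (\ref{eq:U}) yields $(W^n,U^n-U^{n-1})^h=-\tfrac{\tau_n}\theta(\pi^h[b(U^{n-1})]\,\nabla W^n,\nabla W^n)$, which substituted gives exactly (\ref{eq:stab}).

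For uniqueness of $U^n$, let $(U_1,W_1)$, $(U_2,W_2)$ be two solutions. Testing (\ref{eq:W}) for $U_1$ with $\chi=U_2$ and for $U_2$ with $\chi=U_1$ and adding gives $\epsilon(B(\nabla U^{n-1})\,\nabla(U_1-U_2),\nabla(U_1-U_2))\le\tfrac12\cPsi\alpha^{-1}(W_1-W_2,U_1-U_2)^h$, while subtracting the two copies of (\ref{eq:U}) and testing with $W_1-W_2$ shows $(U_1-U_2,W_1-W_2)^h=-\tfrac{\tau_n}\theta(\pi^h[b(U^{n-1})]\,\nabla(W_1-W_2),\nabla(W_1-W_2))\le0$. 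Hence the nonnegative left-hand side vanishes, so $\nabla(U_1-U_2)=0$, and the common mass constraint forces $U_1=U_2$. When $b_{\min}>0$, the subtracted (\ref{eq:U}) also gives $\nabla(W_1-W_2)=0$, so $W_1-W_2=c$ is constant; to conclude $c=0$ I would use a node with $|U^n|<1$, where $\chi=U^n\pm s\,\chi_j$ is admissible in (\ref{eq:W}) for small $s>0$, forcing equality there for both $W_1$ and $W_2$ and hence $\tfrac12\cPsi\alpha^{-1}c\,(1,\chi_j)^h=0$, i.e. $c=0$.

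The main obstacles are the two points where the nondegeneracy of the data is felt. First, existence for a \emph{degenerate} mobility (e.g.\ $b(u)=1-u^2$, relevant for surface diffusion) is not covered by the Green-operator reduction; I would obtain it by solving the regularised problem with $b(U^{n-1})+\delta$, using (\ref{eq:stab}) for $\delta$-uniform bounds, and passing to the limit $\delta\to0$ in finite dimensions, the delicate part being that $W^n$ may lose control on $\{b=0\}$ so that one must pass to the limit in the flux $\pi^h[b(U^{n-1})]\,\nabla W^n$ rather than in $\nabla W^n$ itself. Second, the uniqueness of $W^n$ hinges on pinning the additive constant, i.e.\ on the uniqueness of the mass-constraint multiplier; the strict bound $|(U^0,1)|<\int_\Omega1\dx$ rules out $U^n\equiv\pm1$ and supplies a strictly feasible (Slater) state, and I expect the genuinely delicate step to be upgrading this to the existence of a node with $|U^n|<1$ at the solution, which is exactly what is needed to force $c=0$.
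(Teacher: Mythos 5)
Your proposal follows essentially the same route as the paper. The stability estimate is the paper's own argument: test (\ref{eq:U}) with $\chi=W^n$ and (\ref{eq:W}) with $\chi=U^{n-1}$, then apply Corollary~\ref{cor:B}; your concavity inequality for $\Psi$ is, for this quadratic obstacle potential, exactly the elementary identity (\ref{eq:element}) that the paper invokes. Your existence argument (weighted discrete Green operator, strictly convex minimization over the mass-constrained set) and your uniqueness argument for $U^n$ are precisely the technique of \cite{BloweyE92,BarrettBG99}, to which the paper delegates this part of the theorem, and which the paper itself writes out for the Dirichlet variant in the proof of Theorem~\ref{thm:stabbc} (there the boundary datum $\wD$ pins $W^n$, so the constant-kernel issue you wrestle with does not arise); your regularization plan $b+\delta$ for degenerate mobility is likewise in the spirit of \cite{BarrettBG99}. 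Every step you actually carry out is correct, and you supply more detail than the paper, whose existence/uniqueness proof is a citation.

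The one genuine gap is the final claim, uniqueness of $W^n$, and your instinct that the missing step is the existence of a node with $|U^n(p_j)|<1$ is exactly right: this cannot be derived from $b_{\min}>0$ and $|(U^0,1)|<\int_\Omega 1\dx$ alone. Indeed, take $d=2$, $\gamma$ isotropic as in (\ref{eq:iso}) with $L=1$ and $G_1$ the identity, so that $B$ is the identity by (\ref{eq:B}); take $b\equiv1$, a uniform right-angled triangulation of mesh size $h$, and $U^{n-1}=U^0\in K^h$ taking only the nodal values $\pm1$, changing sign across a single layer of elements. Writing $m_j:=(\chi_j,1)$, the variational inequality (\ref{eq:W}) says that $U^n$ minimizes the linear functional $\chi\mapsto\epsilon\,(B\,\nabla U^n,\nabla\chi)-(\tfrac12\,\cPsi\,\alpha^{-1}W^n+\epsilon^{-1}U^{n-1},\chi)^h$ over the box $K^h$, i.e.\ its nodal coefficients must be $\leq0$ where $U^n=1$, $\geq0$ where $U^n=-1$, and $=0$ where $|U^n|<1$. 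Since $m_j^{-1}\,(\nabla U^{n-1},\nabla\chi_j)$ equals $\pm2\,h^{-2}$ at the interfacial nodes and $0$ at all other nodes, one checks that $(U^n,W^n)=(U^{n-1},c_0)$ solves (\ref{eq:U},b) for \emph{every} constant $c_0$ with $\tfrac12\,\cPsi\,\alpha^{-1}|c_0|\leq\epsilon^{-1}-2\,\epsilon\,h^{-2}$. For $h>\sqrt2\,\epsilon$ this is a nondegenerate interval: the (unique) $U^n$ is all-contact and mesh-pinned, and $W^n$ is not unique, although both hypotheses of the theorem's last sentence hold. So a non-contact node, and with it uniqueness of $W^n$, is only available when the mesh resolves the interface (once $2\,\epsilon\,h^{-2}>\epsilon^{-1}$, all-contact configurations violate the complementarity conditions); this is exactly the content that the paper imports by citing \cite{BloweyE92,BarrettBG99} rather than deriving it from the stated hypotheses, and it is exactly the point at which your blind proof, honestly, stops.
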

\begin{proof}
The existence and uniqueness results follow straightforwardly with the 
techniques in \cite{BarrettBG99}, see also \cite{BloweyE92}, on
noting from (\ref{eq:B}) that $B(q) \in \R^{d\times d}$ is symmetric and
positive definite for all $\vec q \in \R^d$. 
Choosing $\chi = W^n$ in (\ref{eq:U}) and $\chi = U^{n-1}$ in (\ref{eq:W})
yields that
\begin{subequations}
\begin{align}
& \theta\, (U^{n}-U^{n-1}, W^n )^h + \tau_n\,( \pi^h[b(U^{n-1})]
 \,\nabla\, W^{n} , \nabla\, W^n )  = 0 \,, \label{eq:stabu} \\
& \epsilon \, (B(\nabla\, U^{n-1})\, \nabla\, U^{n}, \nabla\, [U^{n-1} -U^n])
 \geq 
 (\tfrac12\,\cPsi\,\alpha^{-1}\,W^n + \epsilon^{-1}\,U^{n-1}, U^{n-1} - U^n)^h
\,. 
\label{eq:stabw}
\end{align}
\end{subequations}
It follows from (\ref{eq:stabu},b), on recalling (\ref{eq:element})
and (\ref{eq:Bstab}), that
\begin{align*}
& \tfrac12\,\epsilon\,|\gamma(\nabla\, U^n)|_0^2 - \tfrac12\,
\epsilon^{-1}\,|U^n|_h^2 + \tau_n\,(2\,\theta\,\alpha)^{-1}\,\cPsi\,
( \pi^h[b(U^{n-1})] \,\nabla\, W^{n} , \nabla\, W^n ) 
\nonumber \\ & \hspace{9cm}
\leq \tfrac12\,\epsilon\,|\gamma(\nabla\, U^{n-1})|_0^2 - \tfrac12\,
\epsilon^{-1}\,|U^{n-1}|_h^2 \,. 
\end{align*}
This yields the desired result (\ref{eq:stab}) on adding the constant
$\frac12\,\epsilon^{-1}\,\int_\Omega 1 \dx$ on both sides.
\end{proof}

\begin{rem}
On replacing (\ref{eq:U}) with
\begin{equation} \label{eq:U2}
 \epsilon \left(\displaystyle\frac{U^{n}-U^{n-1}}{\tau_n},
 \chi \right)^h + \tfrac12\,\cPsi\,\alpha^{-1}\,( W^{n} , \chi )^h 
 = 0 \qquad \forall\ \chi \in S^h
\end{equation}
we obtain a finite element approximation for (\ref{eq:ACa}--c).
Similarly to Theorem~\ref{thm:stab} existence of a unique solution 
$(U^n,W^n) \in K^h \times S^h$ to (\ref{eq:U2}), (\ref{eq:W}), which is
unconditionally stable, can then be shown. In particular, the solution
$(U^n,W^n)$ to (\ref{eq:U2}), (\ref{eq:W}) satisfies the bound (\ref{eq:stab}) 
with the second term on the left hand side of (\ref{eq:stab}) replaced by 
$\tau_n\,\epsilon^{-1}\,(\frac12\,\cPsi\,\alpha^{-1})^2\,|W^n|_h^2$.
\end{rem}

\begin{rem} \label{rem:implicit}
On replacing the term $\epsilon^{-1}\,U^{n-1}$ on the right hand side of
(\ref{eq:W}) with $\epsilon^{-1}\,U^{n}$, we obtain an implicit scheme for
which the existence of a unique solution $U^n$ can only be shown if
the time step $\tau_n$ satisfies a very severe constraint of the form
$\tau_n < C\,\epsilon^3\,\theta\,\alpha^{-1}$, 
where the constant $C>0$ depends only on the anisotropy $\gamma$ and on the
mobility $b$. 
In the isotropic case (\ref{eq:iso}) with constant mobility coefficient 
$b(u) = b_0 \in \R_{>0}$ this constraint can be made precise and is given by
\begin{equation} \label{eq:tau}
\tau_n < 2\,\cPsi\,\epsilon^3\,\theta\,(\alpha\,b_{0})^{-1};
\end{equation}
see e.g.\ \cite{BloweyE92}.
\end{rem}

In the remainder of this section we consider the numerical approximation of
(\ref{eq:MSa}--c), (\ref{eq:wD}). In particular, we introduce a 
finite element approximation for (\ref{eq:CHa}--c), (\ref{eq:CHDbc}). To this
end, let 
\begin{equation} \label{eq:Sh0}
 S^h_0:= \{\chi \in S^h : \chi = 0 \ \mbox{ on $\partial\Omega$} \}  
\quad \mbox{and} \quad
 \ShD:= \{\chi \in S^h : \chi = \wD\ 
\mbox{ on $\partial\Omega$} \} 
\,.
\end{equation}
We then consider the following fully practical, semi-implicit finite element
approximation for (\ref{eq:CHa}--c), (\ref{eq:CHDbc}) with $\theta=1$ and
$b(u)=b_0>0$.
For $n \geq 1$ find $(U^n,W^n) \in K^h \times S^h_D$ such that
\begin{subequations}
\begin{align}
& \left(\displaystyle\frac{U^{n}-U^{n-1}}{\tau_n},
 \chi \right)^h + b_0\,( \nabla\, W^{n} , \nabla\, \chi )
 = 0 \qquad \forall\ \chi \in S^h_0, \label{eq:Ubc} \\
& \epsilon \, (B(\nabla\, U^{n-1})\, \nabla\, U^{n}, \nabla\, [\chi -U^n])
 \geq 
 (\tfrac12\,\cPsi\,\alpha^{-1}\,W^{n} + \epsilon^{-1}\,U^{n-1},\chi - U^n)^h
 \qquad \forall\ \chi \in K^h\,. \label{eq:Wbc}
\end{align}
\end{subequations}

Let 
\begin{equation} \label{eq:Ehbc}
\mathcal{F}_{\gamma}^h(U) = 
2\,\alpha\,\frac1\cPsi\,\mathcal{E}_{\gamma}^h(U) - \wD\,(U,1)
\qquad \forall\ U \in S^h \,.
\end{equation}

Then it holds that the solution to (\ref{eq:Ubc},b) satisfies a discrete
analogue to (\ref{eq:pfLyap}). 

\begin{thm} \label{thm:stabbc}
There exists a unique solution 
$(U^n,W^n) \in K^h \times S^h_D$ to (\ref{eq:Ubc},b).
Moreover, it holds that
\begin{equation} \label{eq:stabbc}
\mathcal{F}_{\gamma}^h(U^n) 
+ \tau_n\,b_0\,|\nabla\, W^{n}|_0^2 
\leq \mathcal{F}_{\gamma}^h(U^{n-1})\,.
\end{equation}
\end{thm}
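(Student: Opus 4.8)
The plan is to follow the proof of Theorem~\ref{thm:stab} closely; the only genuinely new ingredient is the inhomogeneous Dirichlet datum $\wD$. For existence and uniqueness I would again invoke the techniques of \cite{BarrettBG99}, using that $B(\nabla U^{n-1})$ is symmetric positive definite by (\ref{eq:B}). Here the Dirichlet condition actually simplifies matters: given $U^n$, writing $W^n = \wD + \hat W^n$ with $\hat W^n \in S^h_0$, equation (\ref{eq:Ubc}) determines $\hat W^n$ uniquely, since $b_0\,(\nabla\cdot,\nabla\cdot)$ is coercive on $S^h_0$ by a discrete Poincar\'e inequality. Thus $W^n$ is fixed uniquely by $U^n$ with no constant ambiguity, so none of the mean-value caveats of Theorem~\ref{thm:stab} are required. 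Eliminating $W^n$ reduces (\ref{eq:Ubc},b) to a single variational inequality for $U^n\in K^h$ whose principal part $\epsilon\,(B(\nabla U^{n-1})\,\nabla\cdot,\nabla\cdot)$ is coercive and whose $W^n$-coupling is monotone; strict monotonicity then yields existence and uniqueness of $U^n$, hence of $(U^n,W^n)$.

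For the estimate (\ref{eq:stabbc}) I would choose test functions as in Theorem~\ref{thm:stab}, but shifted to respect the boundary condition. Since $W^n\in S^h_D\not\subset S^h_0$, I would take $\chi = W^n - \wD\in S^h_0$ in (\ref{eq:Ubc}); as $\wD$ is constant, $\nabla(W^n-\wD)=\nabla W^n$, and after multiplication by $\tau_n$ this gives $(U^n-U^{n-1},W^n)^h - \wD\,(U^n-U^{n-1},1) + \tau_n\,b_0\,|\nabla W^n|_0^2 = 0$. I would then take $\chi = U^{n-1}\in K^h$ in (\ref{eq:Wbc}) and substitute this relation for $(W^n,U^{n-1}-U^n)^h$.

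Combining the two, the anisotropic term on the left of (\ref{eq:Wbc}) is bounded below by Corollary~\ref{cor:B}, turning $\epsilon\,(B(\nabla U^{n-1})\,\nabla U^n,\nabla(U^n-U^{n-1}))$ into $\tfrac12\,\epsilon\,[|\gamma(\nabla U^n)|_0^2 - |\gamma(\nabla U^{n-1})|_0^2]$, while the elementary identity (\ref{eq:element}) applied to $\epsilon^{-1}(U^{n-1},U^{n-1}-U^n)^h$ supplies the lower bound $\tfrac12\,\epsilon^{-1}[|U^{n-1}|_h^2 - |U^n|_h^2]$; together these two contributions are exactly $\mathcal{E}_\gamma^h(U^n) - \mathcal{E}_\gamma^h(U^{n-1})$. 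This yields $\mathcal{E}_\gamma^h(U^n) - \mathcal{E}_\gamma^h(U^{n-1}) + \tfrac12\,\cPsi\,\alpha^{-1}[-\wD\,(U^n-U^{n-1},1) + \tau_n\,b_0\,|\nabla W^n|_0^2]\leq 0$, and multiplying through by $2\,\alpha\,\cPsi^{-1}$ assembles precisely $\mathcal{F}_\gamma^h(U^n) + \tau_n\,b_0\,|\nabla W^n|_0^2 \leq \mathcal{F}_\gamma^h(U^{n-1})$.

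The step needing most care --- and the only real departure from Theorem~\ref{thm:stab} --- is the bookkeeping of the boundary term. Because the test space in (\ref{eq:Ubc}) is $S^h_0$ rather than $S^h$, one cannot test with $\chi\equiv1$, mass is \emph{not} conserved, and the term $-\wD\,(U^n-U^{n-1},1)$ does not drop out as it would under the Neumann conditions of Theorem~\ref{thm:stab}. The key observation is that this term is exactly the increment of the linear functional $U\mapsto -\wD\,(U,1)$, so that after scaling by $2\,\alpha\,\cPsi^{-1}$ it merges with the energy increment into the modified functional $\mathcal{F}_\gamma^h$ rather than spoiling the dissipation estimate. Everything else parallels Theorem~\ref{thm:stab} and is routine.
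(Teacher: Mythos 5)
Your proposal is correct and takes essentially the same route as the paper's own proof: the paper likewise eliminates $W^n$ through the discrete Dirichlet solution operator $\mathcal{G}^h$ (your decomposition $W^n=\wD+\hat W^n$ with coercivity on $S^h_0$ is the same device), reduces (\ref{eq:Ubc},b) to a strictly convex/strictly monotone variational inequality for $U^n\in K^h$ alone, and then proves (\ref{eq:stabbc}) by testing (\ref{eq:Ubc}) with $\chi=W^n-\wD$ and (\ref{eq:Wbc}) with $\chi=U^{n-1}$, combining via (\ref{eq:element}) and (\ref{eq:Bstab}). Your key observation that the non-conserved mass term $-\wD\,(U^n-U^{n-1},1)$ is absorbed as the increment of the linear part of $\mathcal{F}_\gamma^h$ is precisely how the paper assembles the final estimate.
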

\begin{proof}
The existence and uniqueness proof is similar to the proof of
Theorem~\ref{thm:stab}, but we detail it here for the readers' convenience.
Let $\mathcal{G}^h : S^h \to S^h_0$ denote the discrete solution operator for
the homogeneous Dirichlet problem on $\Omega$, i.e.\
\begin{equation} \label{eq:Gh}
(\nabla\,[\mathcal{G}^h\,v^h], \nabla\,\chi) = (v^h, \chi)^h\qquad \forall\ \chi
\in S^h_0\,,\quad \forall\ v^h \in S^h\,.
\end{equation}
Hence for $U^n\in K^h$ we have that (\ref{eq:Ubc}) is equivalent to
\begin{equation} \label{eq:GU}
W^n = \wD - b_0^{-1}\,\mathcal{G}^h[ \tfrac{U^{n}-U^{n-1}}{\tau_n}]\,.
\end{equation}
It follows from (\ref{eq:Wbc}) and (\ref{eq:GU}) that $U^n \in K^h$ is such 
that
\begin{align} 
& 
\epsilon\,(B(\nabla\, U^{n-1})\,\nabla\, U^n,\nabla\, (\chi-U^n)) + 
(\hatalpha\,b_0)^{-1}\,(\mathcal{G}^h[ \tfrac{U^{n}-U^{n-1}}{\tau_n}],
\chi-U^n)^h \nonumber \\ & \hspace{6.5cm}
\geq (\hatalpha^{-1}\,\wD + \epsilon^{-1}\,U^{n-1},\chi-U^n)^h 
\qquad \forall\ \chi \in K^h\,,
\label{eq:EL}
\end{align}
where $\hatalpha:= \frac2\cPsi\,\alpha > 0$.
There exists a unique $U^n \in K^h$
solving (\ref{eq:EL}) since this is the 
Euler--Lagrange variational inequality of the strictly
convex minimization problem
\begin{equation*}
\min_{z^h \in K^h} \!
\left\{\tfrac\epsilon{2}\,(B(\nabla\,U^{n-1})\,\nabla\, z^h, \nabla\, z^h) 
+(2\,\tau_n\,\hatalpha\,b_0)^{-1}\,|\nabla\,{\cal G}^h
(z^h-U^{n-1})|_0^2 
- (\hatalpha^{-1}\,\wD+\epsilon^{-1}\,U^{n-1}, z^h)^h\right\} . 
\end{equation*}
Therefore, on recalling (\ref{eq:GU}), we have existence of a unique
solution $(U^n,W^n) \in K^h\times S^h_D$ to
(\ref{eq:Ubc},b).
Choosing $\chi = W^n - \wD$ in (\ref{eq:Ubc}) and $\chi = U^{n-1}$ in 
(\ref{eq:Wbc}) yields that
\begin{align*}
& (U^{n}-U^{n-1}, W^n - \wD )^h + 
 \tau_n\,b_0\,(\nabla\, W^{n} , \nabla\, W^n )  = 0 \,, \\
& \epsilon \, (B(\nabla\, U^{n-1})\, \nabla\, U^{n}, \nabla\, [U^{n-1} -U^n])
 \geq (\hatalpha^{-1}\,W^{n} + \epsilon^{-1}\,U^{n-1}, U^{n-1} - U^n)^h\,. 
\end{align*}
Hence the desired result (\ref{eq:stabbc}) follows from 
(\ref{eq:element}) and (\ref{eq:Bstab}).
\end{proof}

\begin{rem} \label{rem:bl}
It is easy to show that for $U^{n-1} = 1$ and
\begin{equation} \label{eq:bl}
-\alpha^{-1}\, \wD \leq \frac2\cPsi\,\epsilon^{-1}
\end{equation}
the unique solution to (\ref{eq:Ubc},b) is given by $U^n = 1$ and $W^n = \wD$.
However, if the phase field parameter $\epsilon$ does not satisfy 
(\ref{eq:bl}), then $U^n = 1$ and $W^n = \wD$ is no longer the 
solution to (\ref{eq:Ubc},b). In fact, in practice
it is observed that for $\epsilon$ sufficiently large the solution $U^n$
exhibits a boundary layer close to $\partial\Omega$ where $U^n < 1$. This
artificial boundary layer, which formally can be shown to be also admitted by
the continuous problem (\ref{eq:CHa},b,d), (\ref{eq:CHDbc}), 
is an undesired effect of the phase field approximation for the sharp 
interface problem (\ref{eq:MSa}--c), (\ref{eq:wD}).
\end{rem}

\section{Numerical Experiments} \label{sec:4}

In this section we report on numerical experiments for the proposed finite
element approximations. For the implementation of the approximations we have
used the adaptive finite element toolbox ALBERTA, see \cite{Alberta}. We employ
the adaptive mesh strategy introduced in \cite{voids} and
\cite{voids3d}, respectively, for $d=2$ and $d=3$.
This results in a fine mesh of uniform mesh size 
$h_f$ inside the interfacial region $|U^{n-1}|<1$ and a coarse mesh of uniform 
mesh size $h_c$ further away from it. 
Here $h_{f} = \frac{2\,H}{N_{f}}$ and $h_{c} =  \frac{2\,H}{N_{c}}$
are given by two integer numbers $N_f >  N_c$, where we assume from now on that
$\Omega = (-H,H)^d$.
As a solution method for the resulting system of algebraic equations 
we use the Uzawa-multigrid iteration from \cite{voids3d},
which is based on the ideas in \cite{GraserK07}. 
We remark that
recently various alternative solution methods have been proposed, see e.g.\
\cite{mgch,BlankBG11,HintermullerHT11,GraserKS12}.

For all the computations we take $H=\frac12$, 
unless otherwise stated.
Throughout this section the initial data $u_0\in
C(\overline\Omega)$ is chosen with a well developed interface of width
$\epsilon\,\pi$, in which $u_0$ varies smoothly. 
Details of such initial data can be found in e.g.\ 
\cite{voids,wccmproc,voids3d}.
Unless otherwise stated we always set 
$\epsilon^{-1}=16\,\pi$ and $N_f = 128$, $N_c=16$. In addition, we employ
uniform time steps $\tau_n = \tau$, $n = 1 \to N$.

For numerical approximations of (\ref{eq:aMC}) we employ the scheme
(\ref{eq:U2}), (\ref{eq:W}).
In computations for (\ref{eq:aSD}) we use the scheme (\ref{eq:U},b) and
fix $b(u) = 1 - u^2$, $\theta = \epsilon$ and 
$\alpha = \frac2\cPsi$. In all other cases, i.e.\ for
the sharp interface limits (\ref{eq:MSa}--c) with (\ref{eq:MSd}) or 
(\ref{eq:wD}), we fix $b(u) = b_0 = 2$, $\theta = 1$ and $\alpha=1$ 
unless otherwise stated.

For the anisotropies in our numerical results we always choose among
\begin{alignat*}{2}
& \text{\sc ani$_1^{(\delta)}$:} \qquad
\gamma_1(\vec{p}) = \sum_{j=1}^d\, \left[ \delta^2\,
|\vec{p}|^2+ p_j^2\,(1-\delta^2) \right]^{\frac12} \,,\quad
&& \text{with}\quad \delta > 0\,, 
\\
& \text{{\sc ani$_2$:} $\gamma$ as on the bottom of Figure~3 in \cite{ani3d}},
\qquad\qquad
&& \text{{\sc ani$_3$:} $\gamma$ as on the right of Figure~2 in 
\cite{dendritic}}, \\
& \text{{\sc ani$_4$:} $\gamma$ as in Figure~3 in \cite{jcg}}.
\end{alignat*}
We remark that {\sc ani$_1^{(\delta)}$} is a regularized $l_1$--norm, so that
its Wulff shape for $\delta$ small is given by a smoothed square (in 2d) or a
smoothed cube (in 3d) with nearly flat sides/facets. Anisotropies with such 
flat sides or facets are called crystalline. Also the choices
{\sc ani$_i$}, $i=2\to4$, represent nearly crystalline anisotropies. Here the
Wulff shapes are given by a smoothed cylinder, a smoothed hexagon and a
smoothed hexagonal prism, respectively. Finally, we denote by
{\sc ani$_1^{\star}$} the anisotropy
{\sc ani$_1^{(0.01)}$} rotated by $\tfrac\pi4$ in the $x_1-x_2$-plane. 

\subsection{Numerical results in 2d}

A numerical experiment for (\ref{eq:aMC}) with the help of  
the approximation (\ref{eq:U2}), (\ref{eq:W}) for
the Allen--Cahn equation (\ref{eq:ACa}--c) can be seen in Figure~\ref{fig:aMC}.
Here the initial profile is given by a circle with radius $0.3$. We set
$\tau=10^{-4}$ and $T=0.05$. As expected, the round interface first becomes
facetted, before is shrinks to a point and disappears.
\begin{figure}
\center
\includegraphics[angle=-0,width=0.19\textwidth]{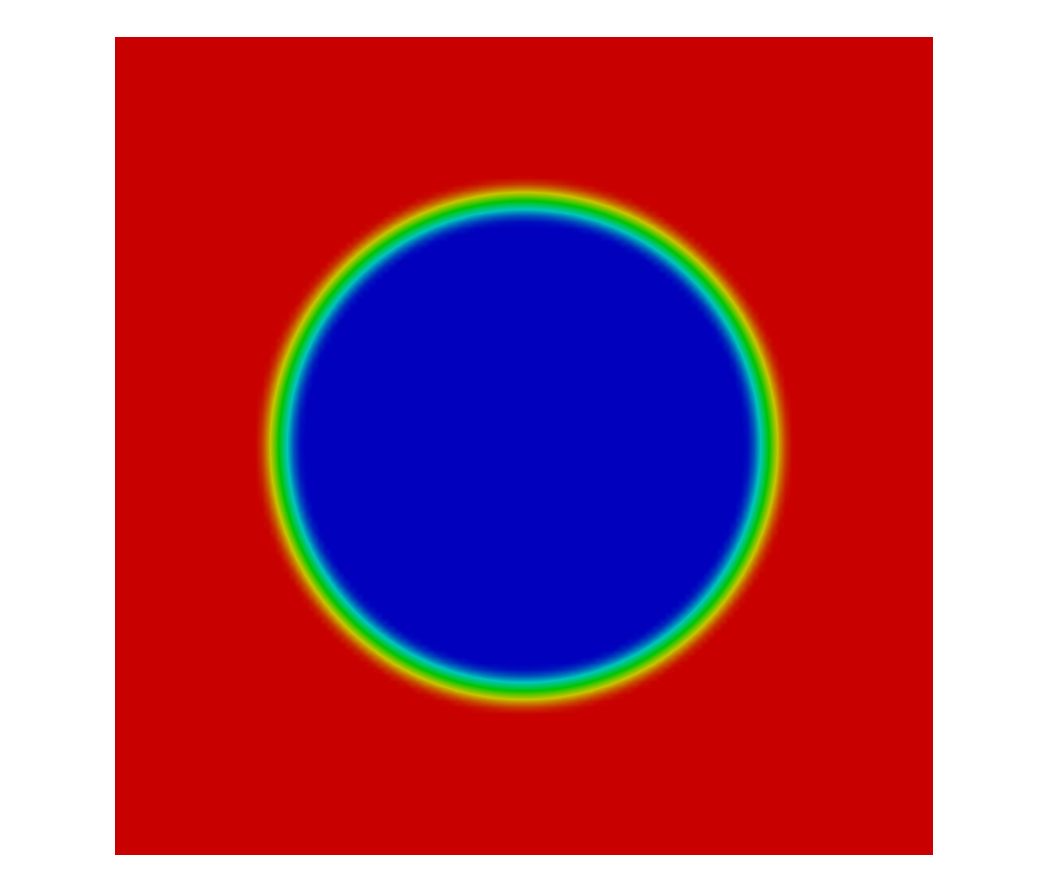}
\includegraphics[angle=-0,width=0.19\textwidth]{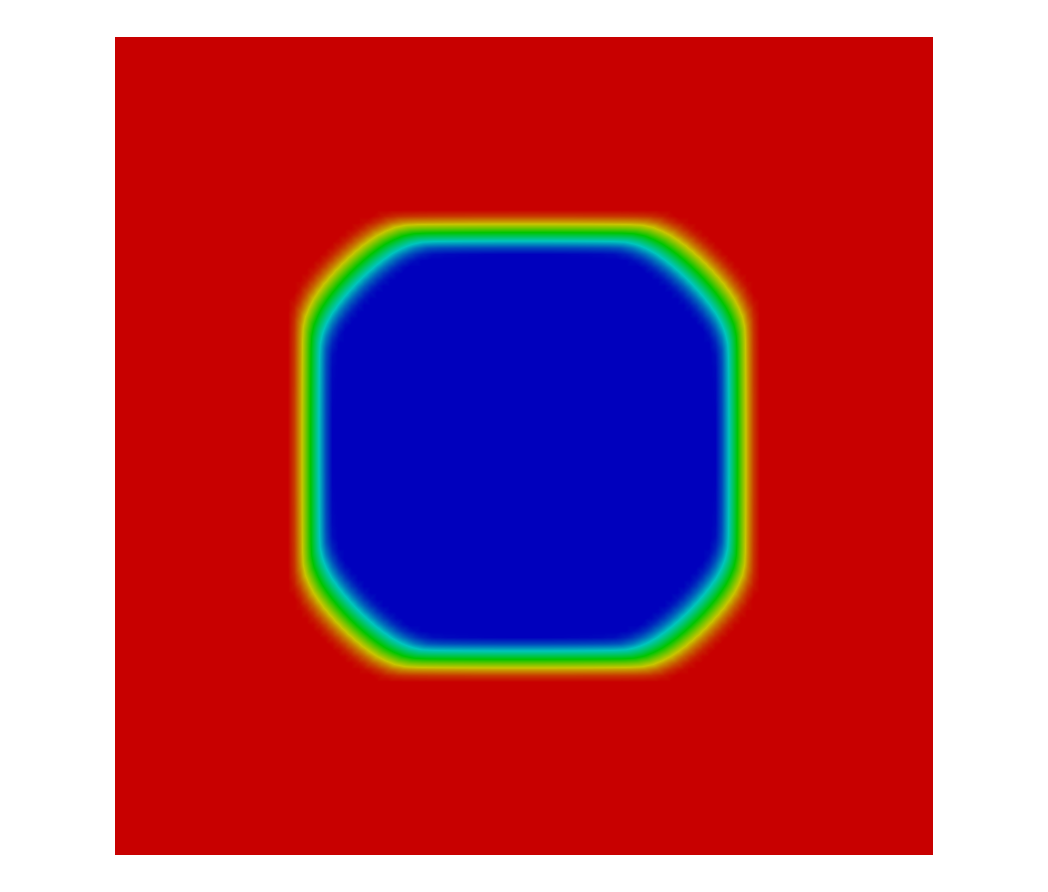}
\includegraphics[angle=-0,width=0.19\textwidth]{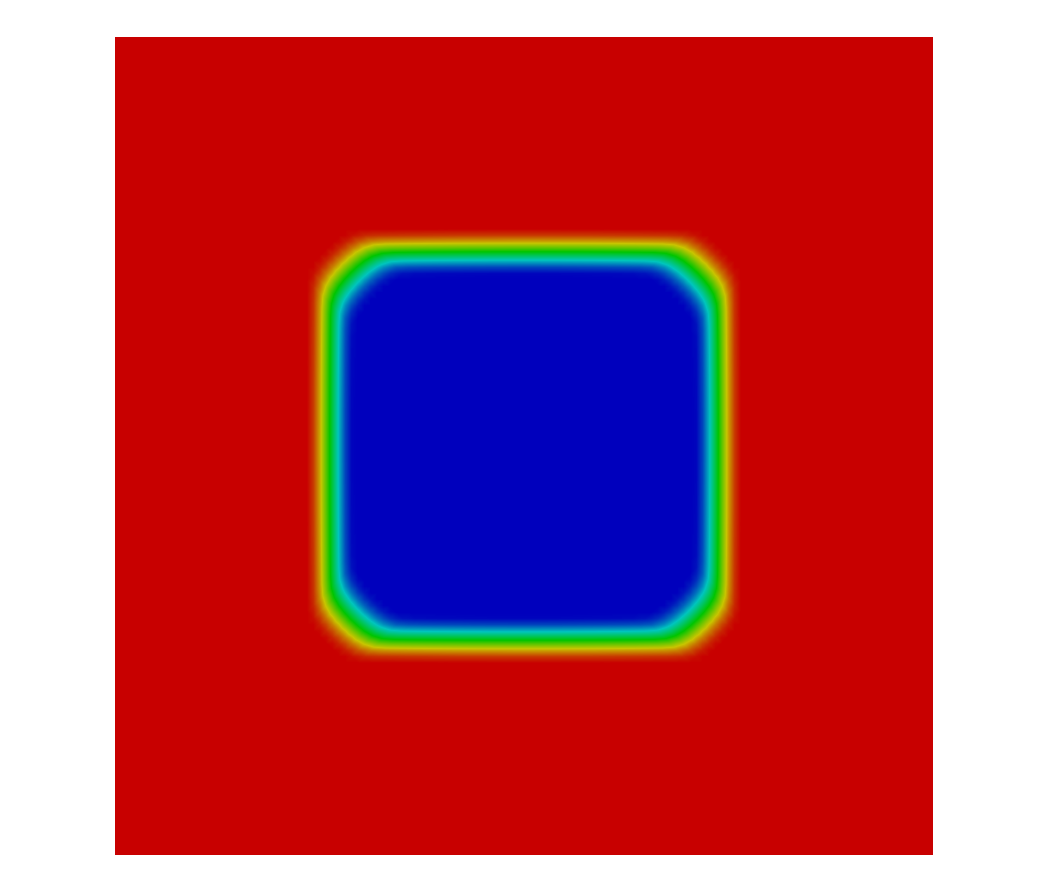}
\includegraphics[angle=-0,width=0.19\textwidth]{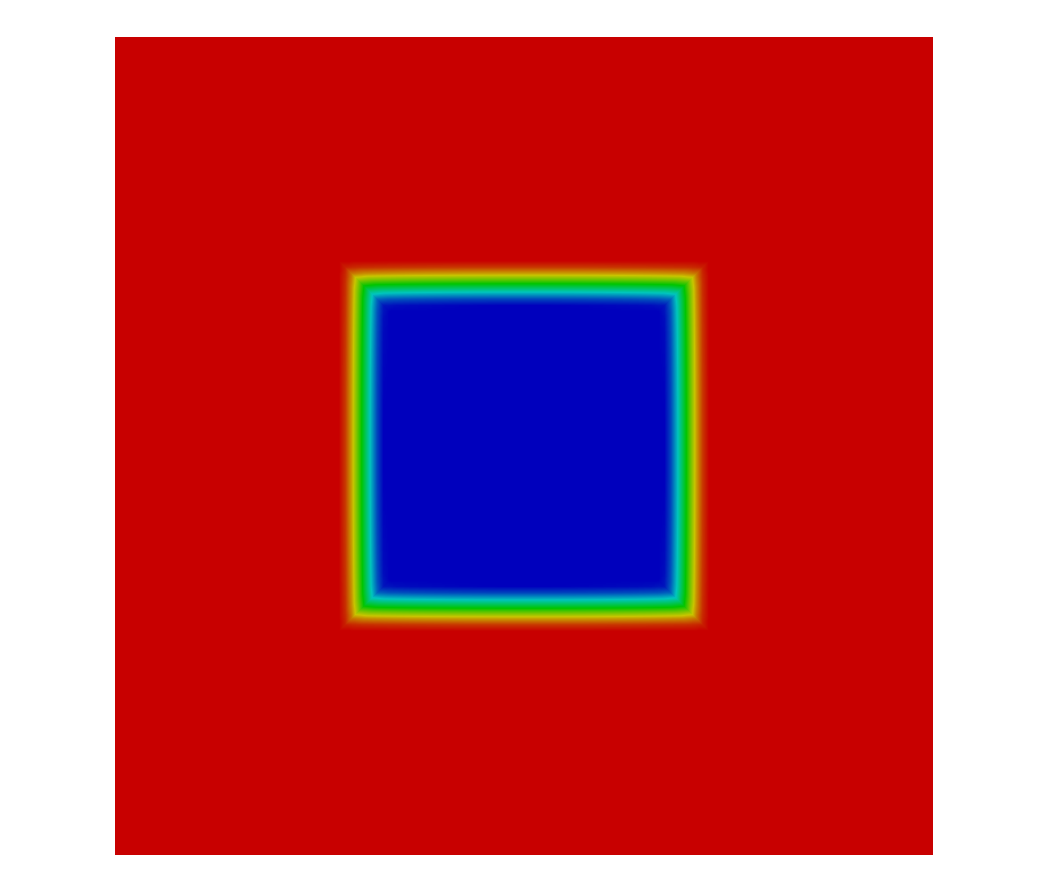}
\includegraphics[angle=-0,width=0.19\textwidth]{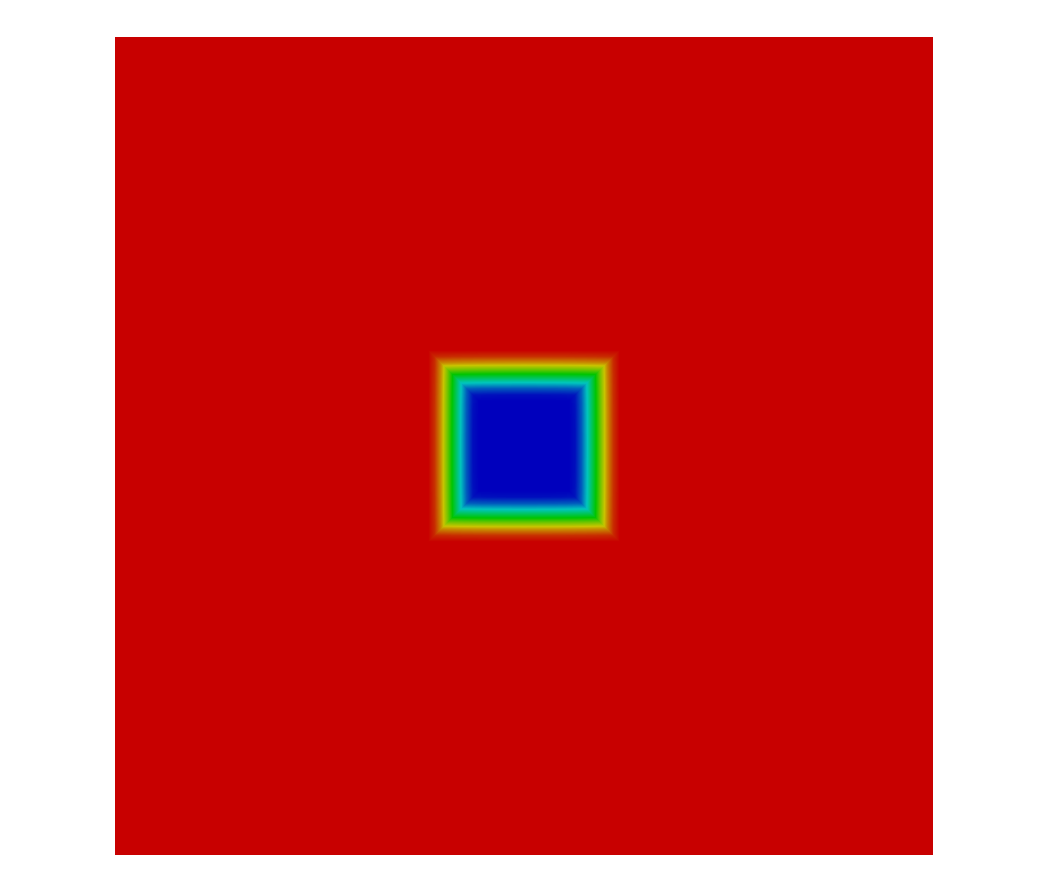}
\includegraphics[angle=-90,width=0.35\textwidth]{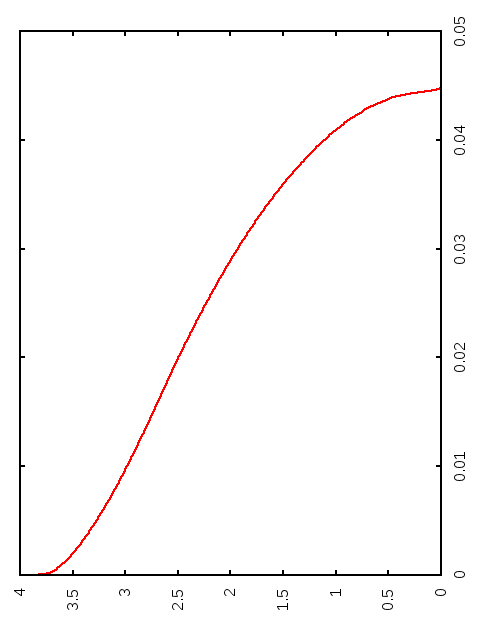}
\caption{({\sc ani$_1^{(0.01)}$}) 
A phase field approximation for the anisotropic mean curvature flow
(\ref{eq:aMC}).
Snapshots of the solution at times $t=0,\,5\times10^{-3},\,10^{-2},\,
2\times10^{-2},\,4\times10^{-2}$.
A plot of $\mathcal{E}_\gamma^h$ below.
}
\label{fig:aMC}
\end{figure}%

A numerical experiment for (\ref{eq:aSD}) with the help of
the approximation (\ref{eq:U},b), for
the Cahn--Hilliard equation (\ref{eq:CHa}--d) 
can be seen in Figure~\ref{fig:aSD}.
Here the initial profile is given by two circles with radii $0.2$ and $0.15$. 
We set $\tau=10^{-6}$ and $T=10^{-4}$. We observe that the two connected 
components of the inner phase each take on the form of the hexagonal Wulff 
shape.
\begin{figure}
\center
\includegraphics[angle=-0,width=0.19\textwidth]{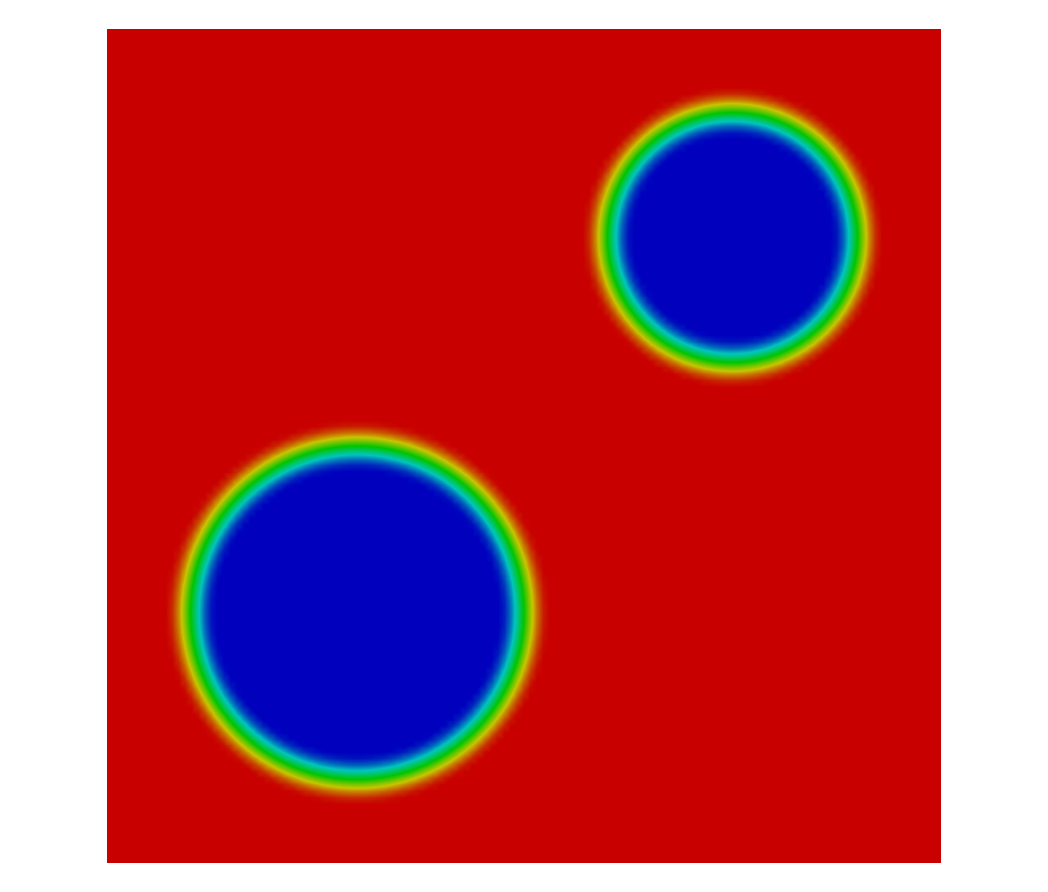}
\includegraphics[angle=-0,width=0.19\textwidth]{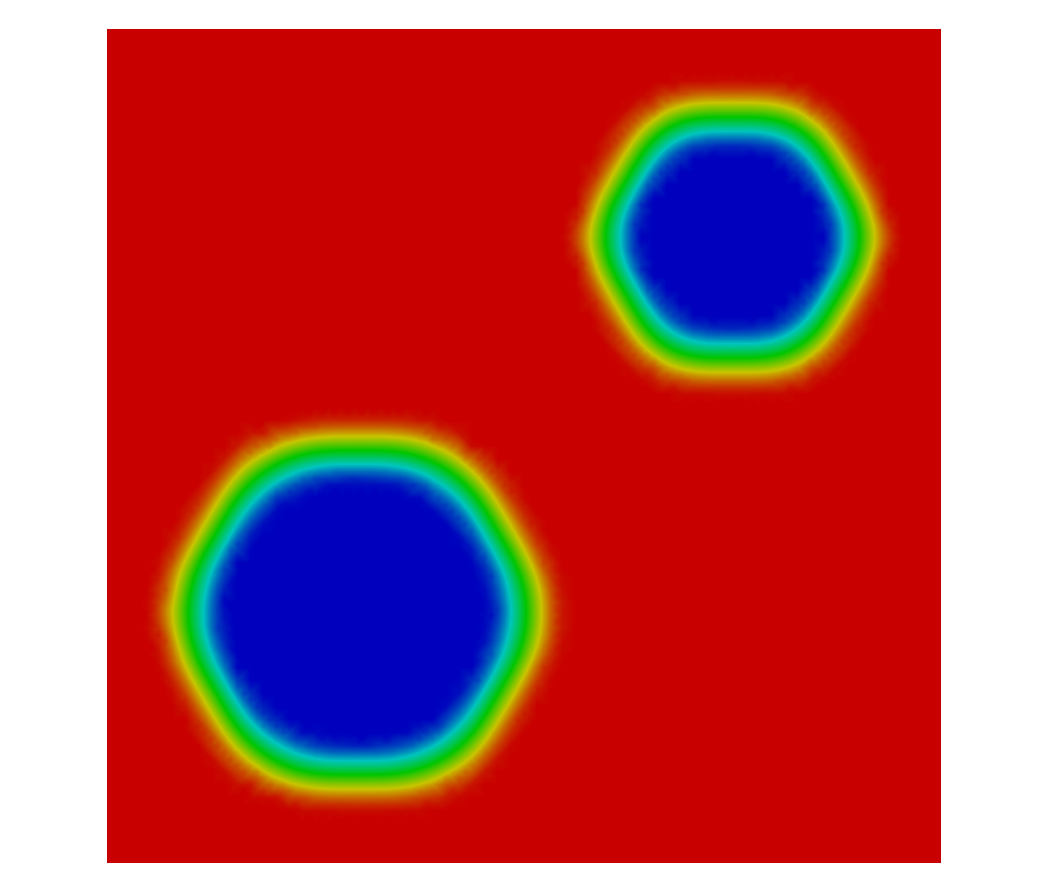}
\includegraphics[angle=-0,width=0.19\textwidth]{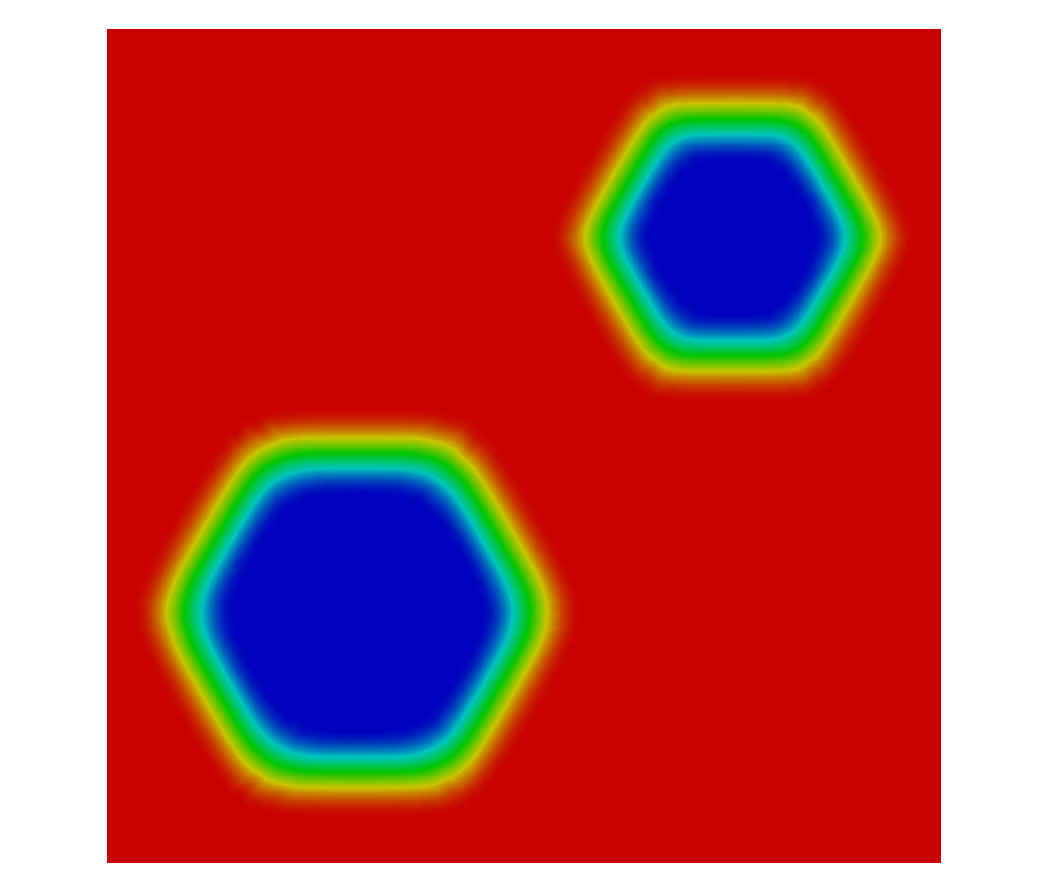}
\includegraphics[angle=-0,width=0.19\textwidth]{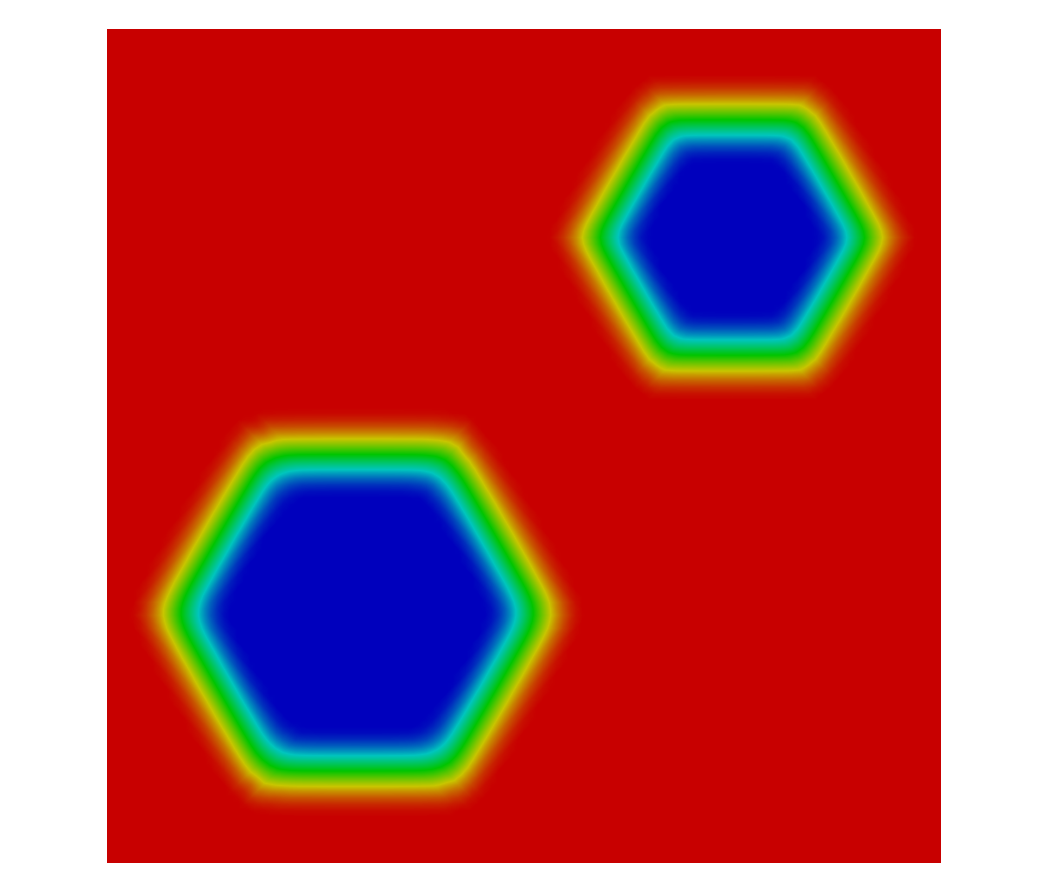}
\includegraphics[angle=-0,width=0.19\textwidth]{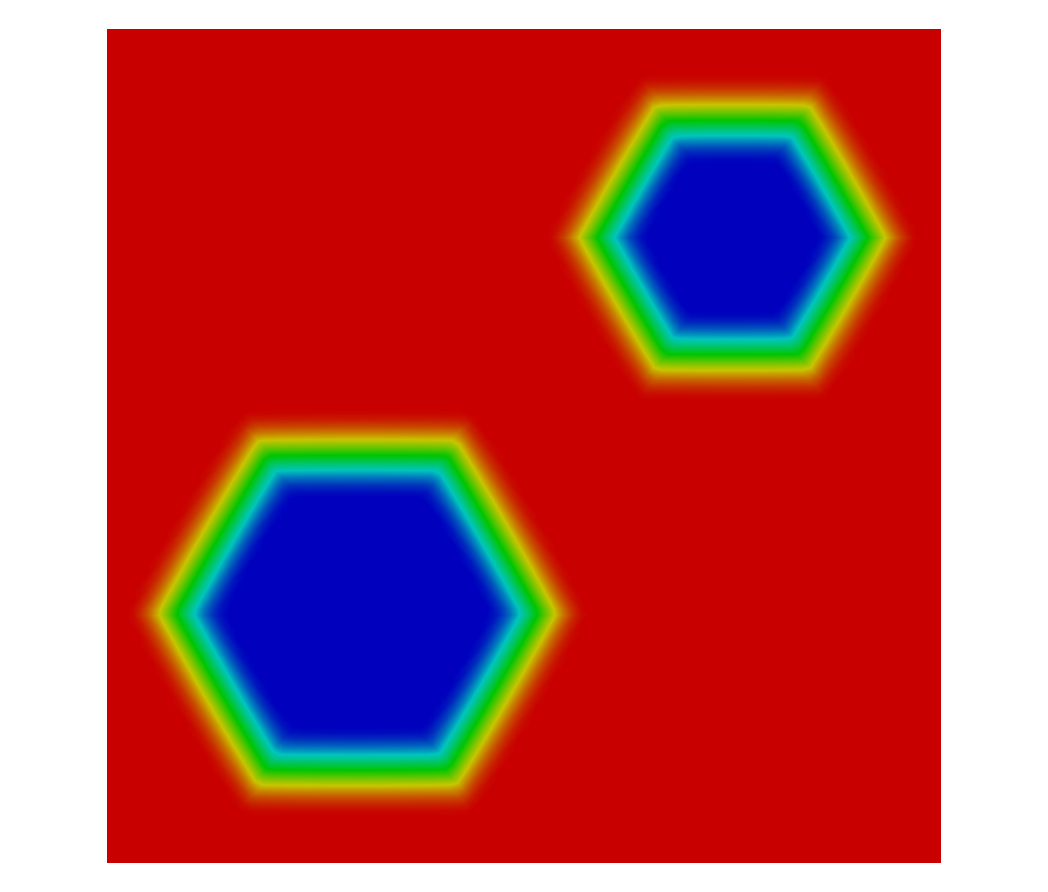}
\includegraphics[angle=-90,width=0.35\textwidth]{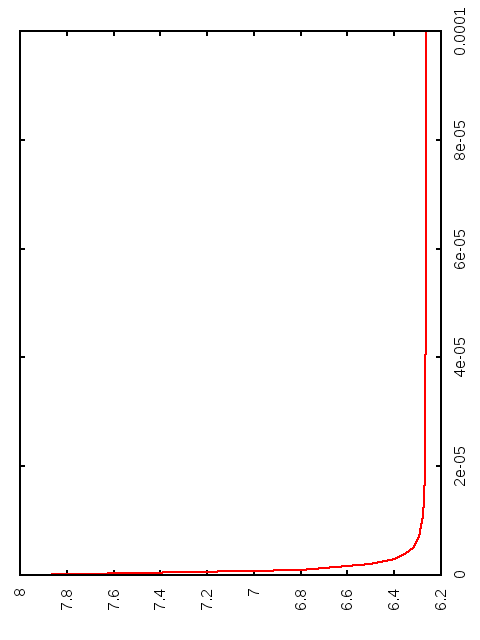}
\caption{({\sc ani$_3$})
A phase field approximation for anisotropic surface diffusion (\ref{eq:aSD}).
Snapshots of the solution at times $t=0,\,2\times10^{-6},\,
5\times10^{-6},\,10^{-5},\,10^{-4}$.
A plot of $\mathcal{E}_\gamma^h$ below.
}
\label{fig:aSD}
\end{figure}%

A repeat of the experiment but now for $b(u)=b_0=2$, so that the sharp 
interface limit is given by the Mullins--Sekerka problem (\ref{eq:MSa}--d), 
is shown in Figure~\ref{fig:aMS}. Here we set $\tau=10^{-5}$ and 
$T=5\times10^{-3}$. Now, in contrast to the evolution in Figure~\ref{fig:aSD},
the smaller region shrinks so that eventually there is only one connected
component of the inner phase. Of course, the final interface is converging to
the hexagonal Wulff shape.
\begin{figure}
\center
\includegraphics[angle=-0,width=0.19\textwidth]{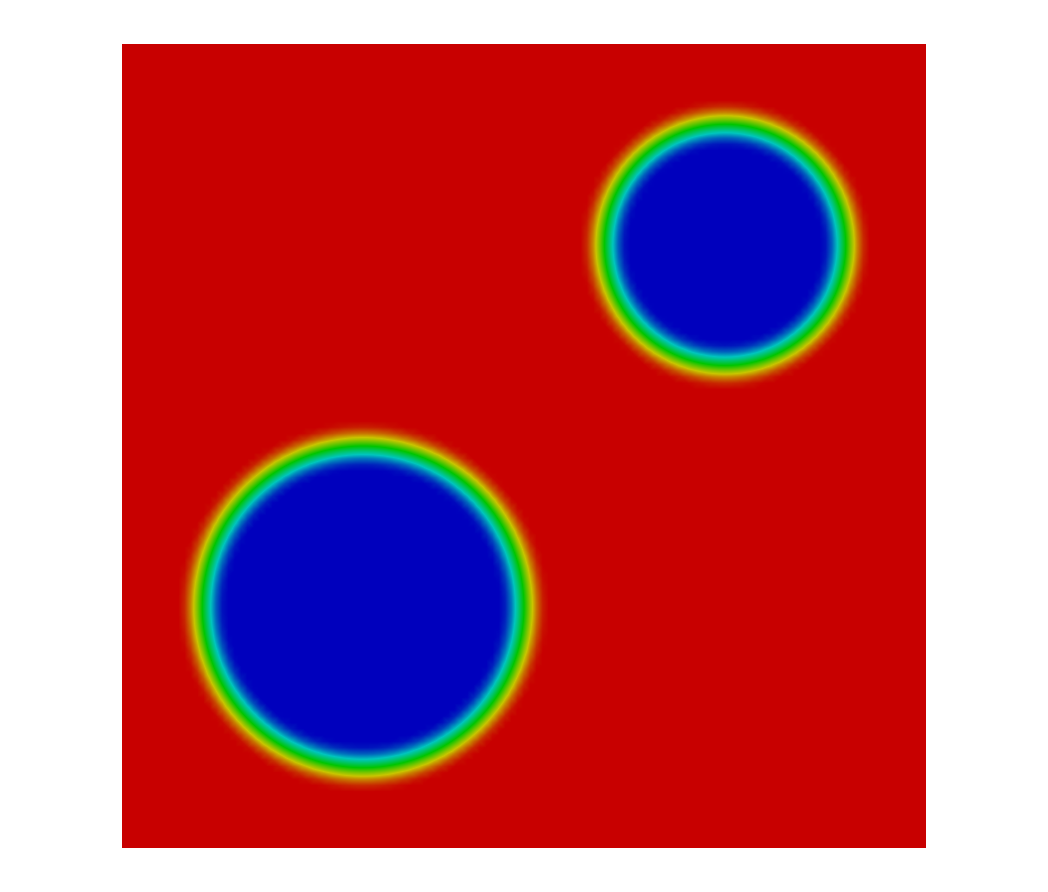}
\includegraphics[angle=-0,width=0.19\textwidth]{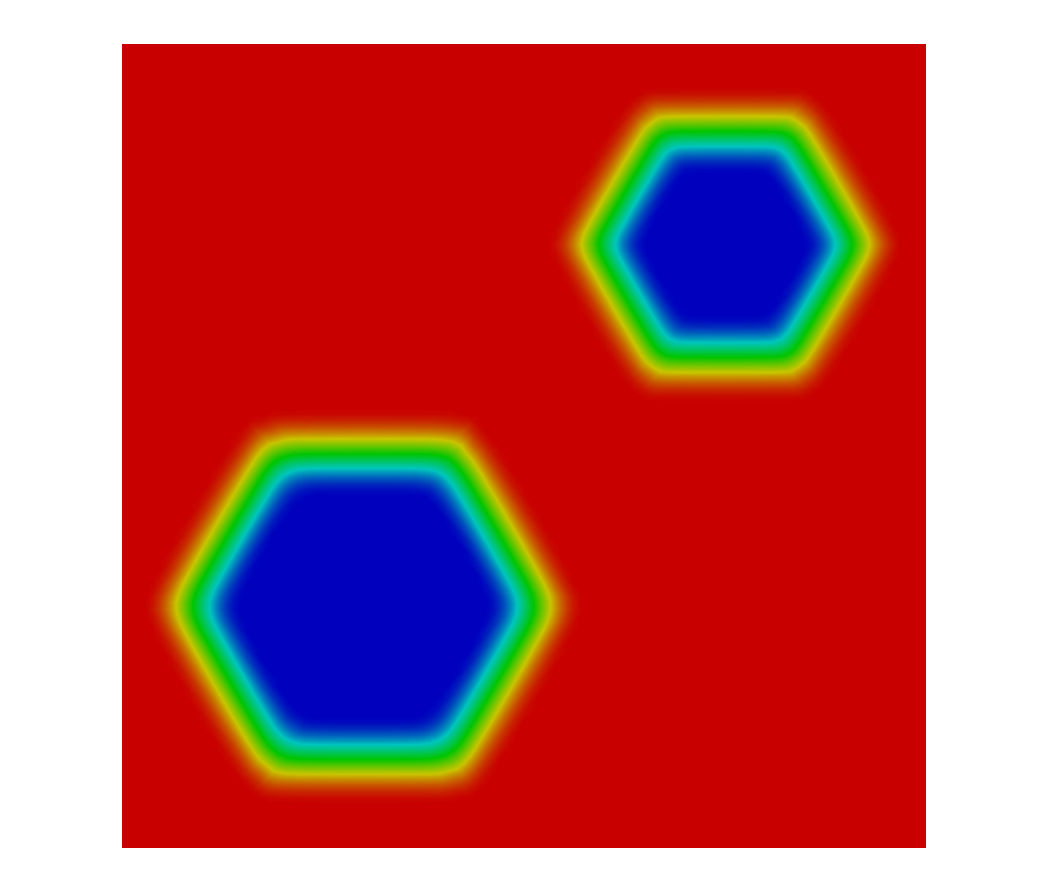}
\includegraphics[angle=-0,width=0.19\textwidth]{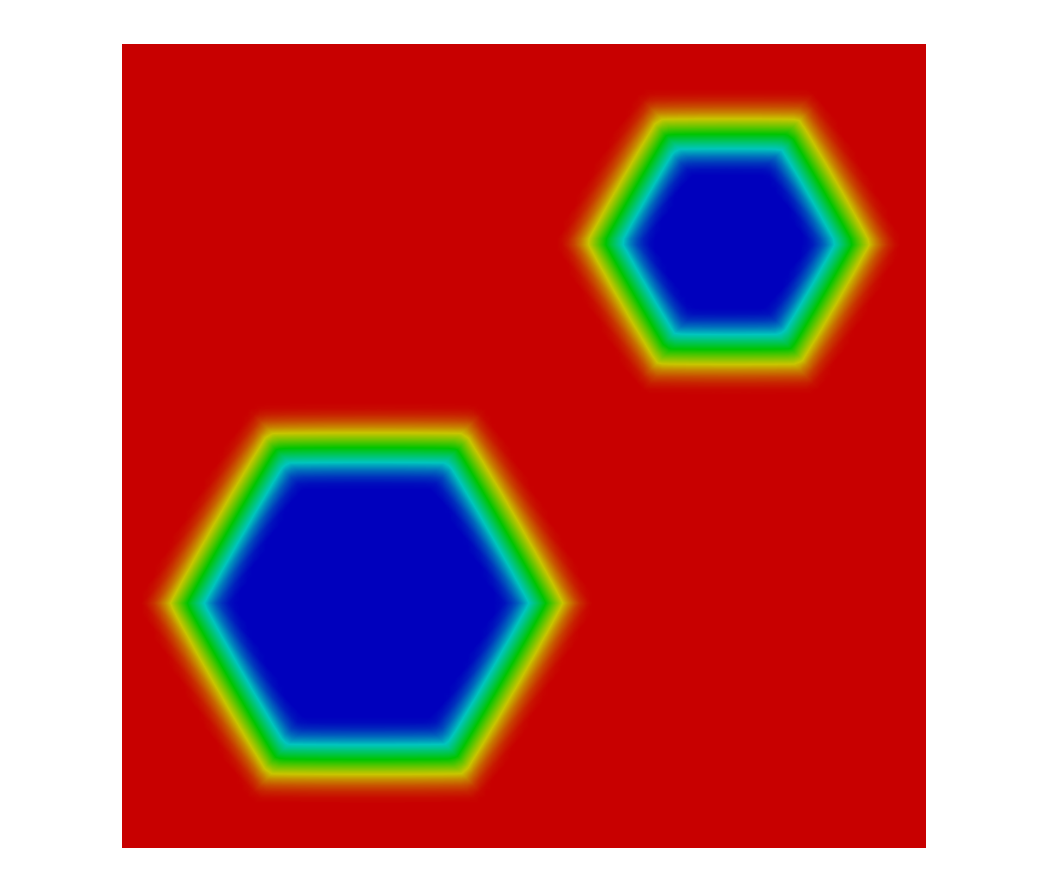} 
\includegraphics[angle=-0,width=0.19\textwidth]{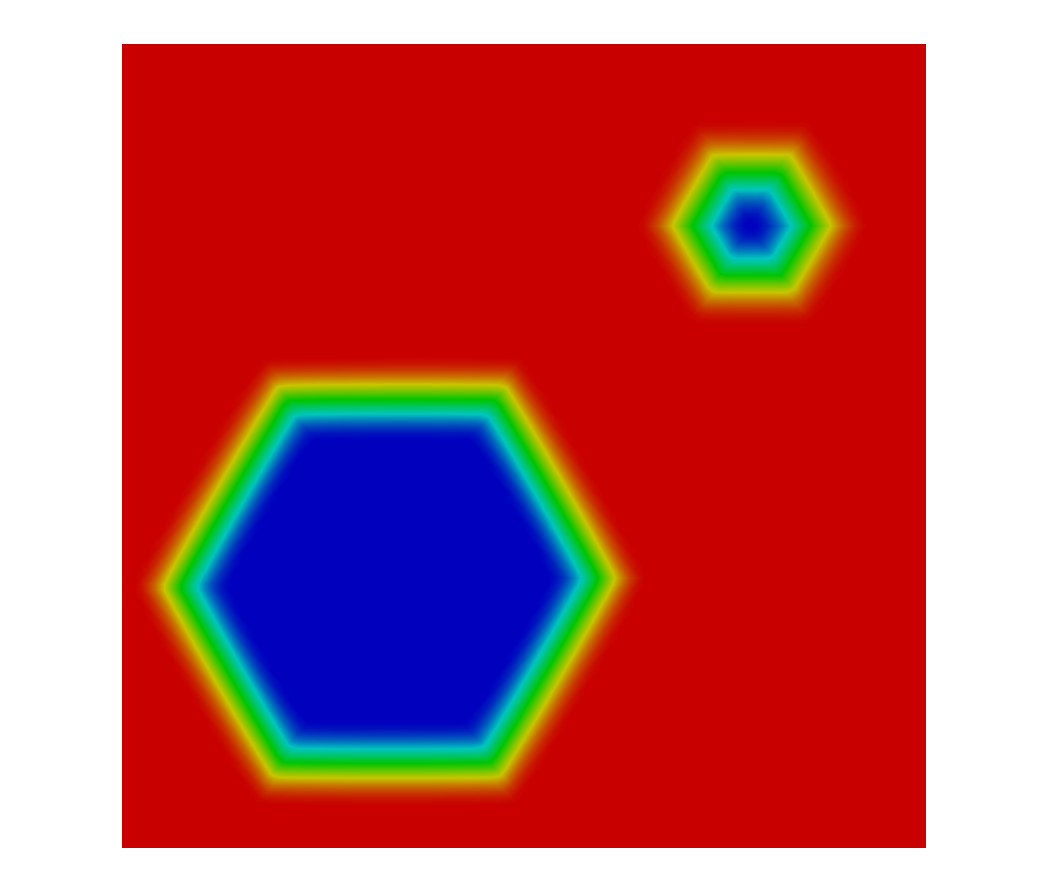}
\includegraphics[angle=-0,width=0.19\textwidth]{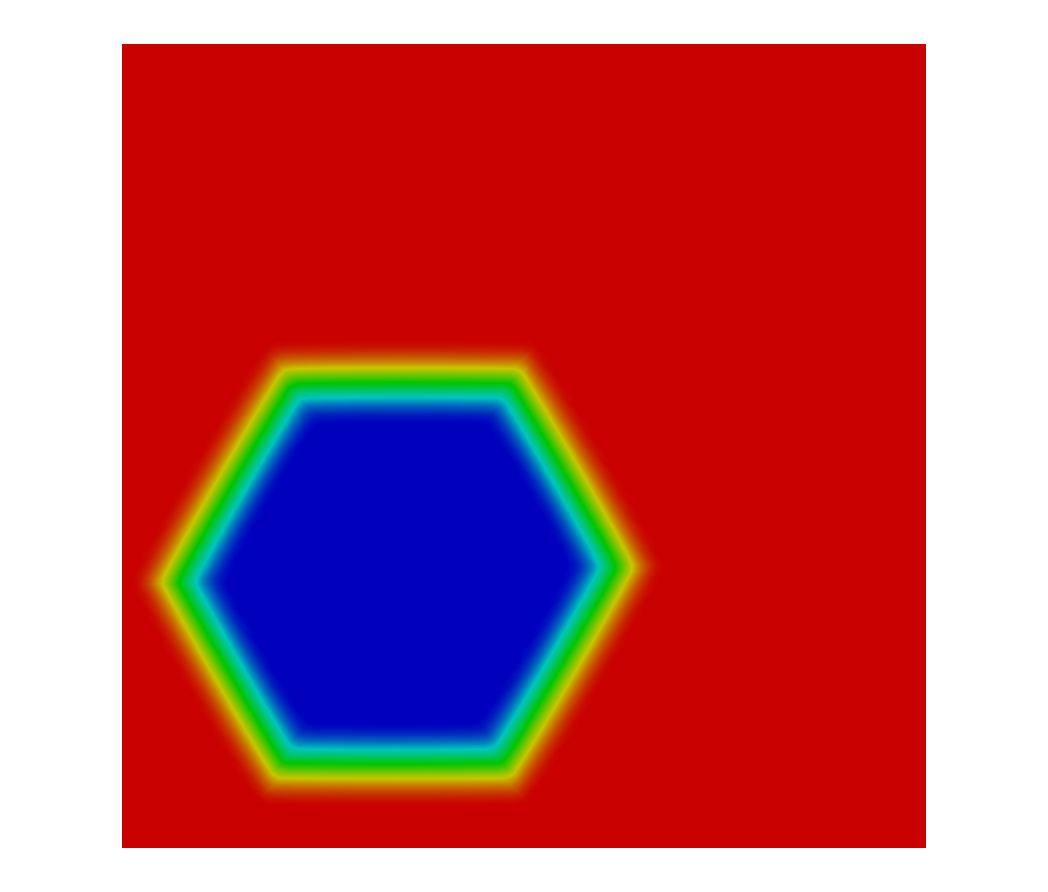}
\includegraphics[angle=-90,width=0.35\textwidth]{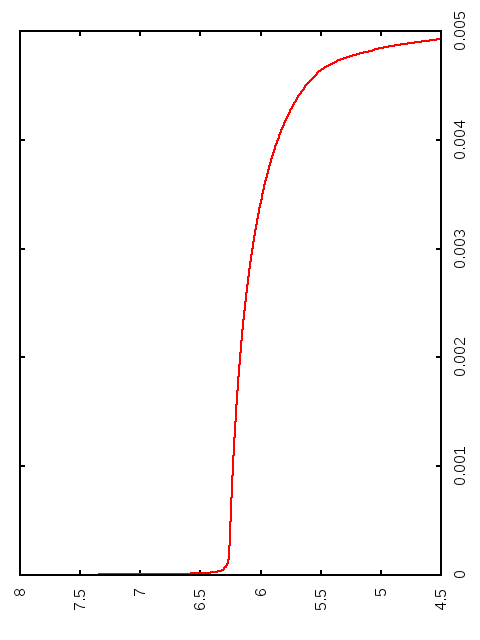}
\caption{({\sc ani$_3$})
A phase field approximation for the anisotropic Mullins--Sekerka problem
(\ref{eq:MSa}--d).
Snapshots of the solution at times $t=0,\,10^{-4},\,
10^{-3},\,4.6\times10^{-3},\,5\times10^{-3}$.
A plot of $\mathcal{E}_\gamma^h$ below.
}
\label{fig:aMS}
\end{figure}%

The remaining computations in this subsection are for the scheme 
(\ref{eq:Ubc},b).
In order to visualize the possible onset of a boundary layer as explained in
Remark~\ref{rem:bl}, we present a computation for (\ref{eq:Ubc},b) with the
initial data $U^0 = u_0 = 1$. 
As we set $\alpha = 1$, the critical 
value for $\wD$ in (\ref{eq:bl}) is 
$-\frac2\cPsi\,\epsilon^{-1} = -\frac4\pi\,16\,\pi = -64$. 
In our numerical
computations this lower bound appears to be sharp. In particular, we observe
that $U^n=1$ is a steady state whenever $\wD \geq -64$, but a boundary layer
forms already for e.g.\ $\wD = -64 - 10^{-8}$. 
As an example, we present a run
for $\wD = -65$ in Figure~\ref{fig:bl}, where we can clearly see how the 
boundary layer develops. Once the boundary layer has formed, the inner phase
first shrinks and then disappears, leading to the steady state solution
$U^0 = -1$ and $W^n = \wD$.
Note that this phenomenon is completely
independent from the choice of anisotropy $\gamma$. The discretization
parameters for this experiment were $N_f = N_c = 128$ and $\tau = 10^{-5}$
with $T=10^{-3}$.
\begin{figure}
\center
\includegraphics[angle=-0,width=0.19\textwidth]{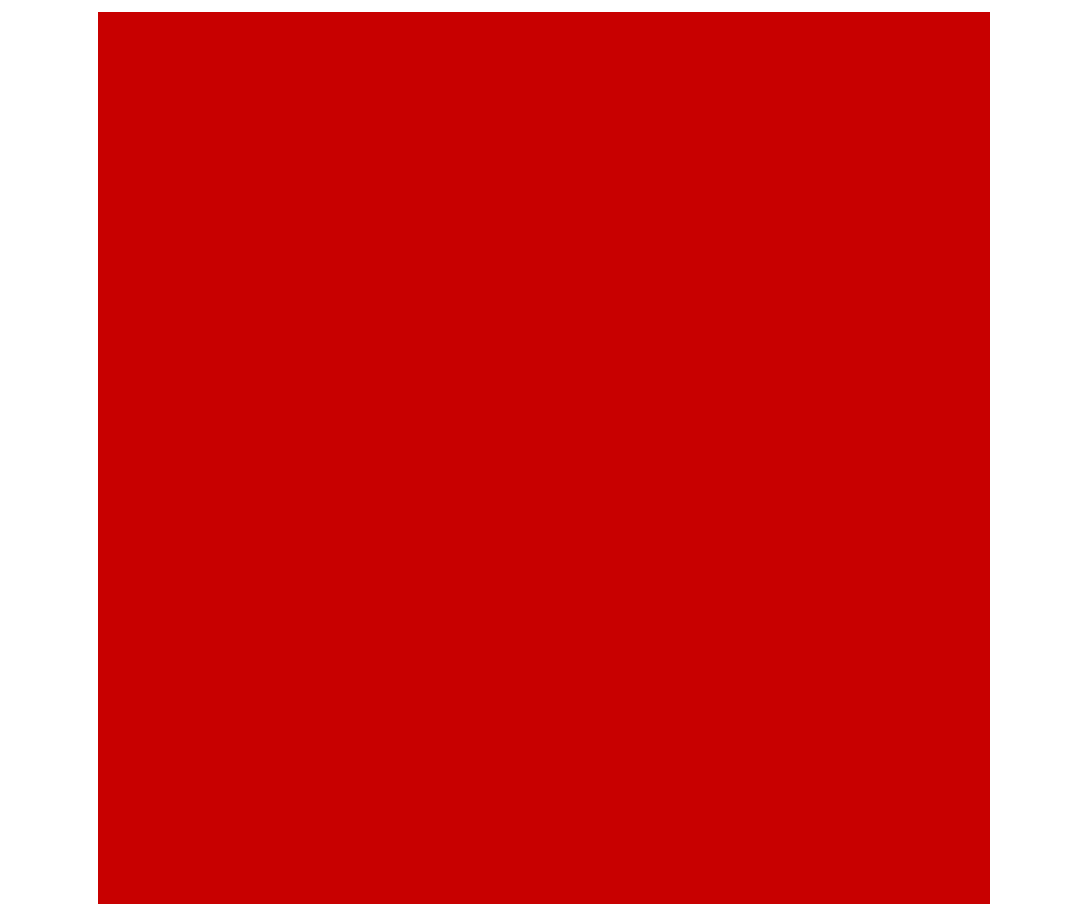}
\includegraphics[angle=-0,width=0.19\textwidth]{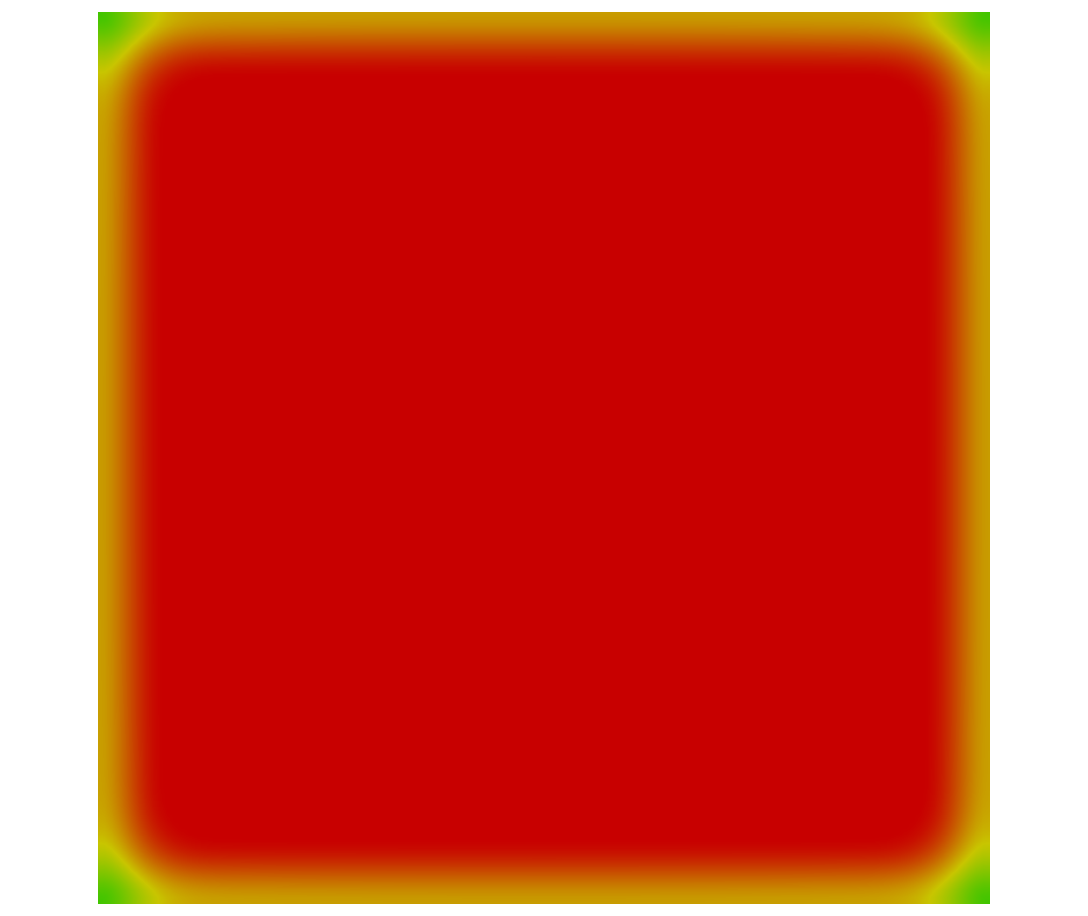}
\includegraphics[angle=-0,width=0.19\textwidth]{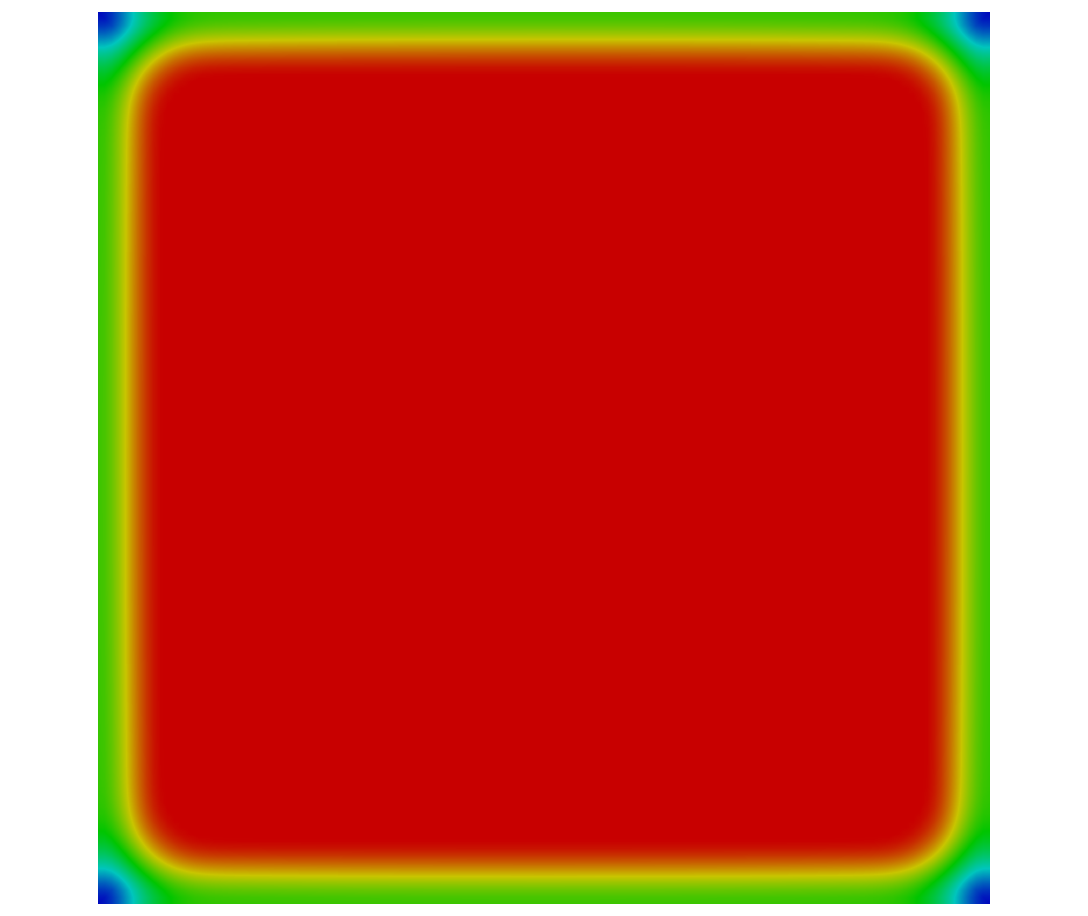}
\includegraphics[angle=-0,width=0.19\textwidth]{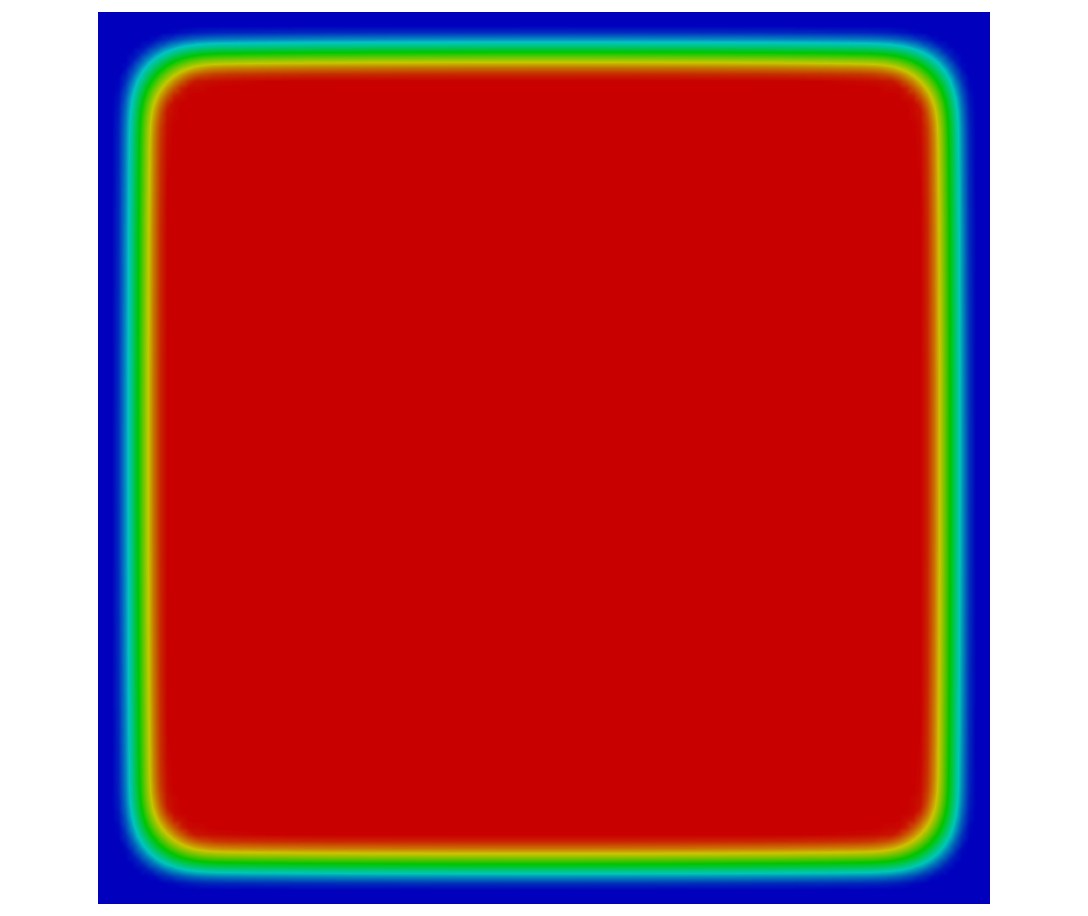}
\includegraphics[angle=-0,width=0.19\textwidth]{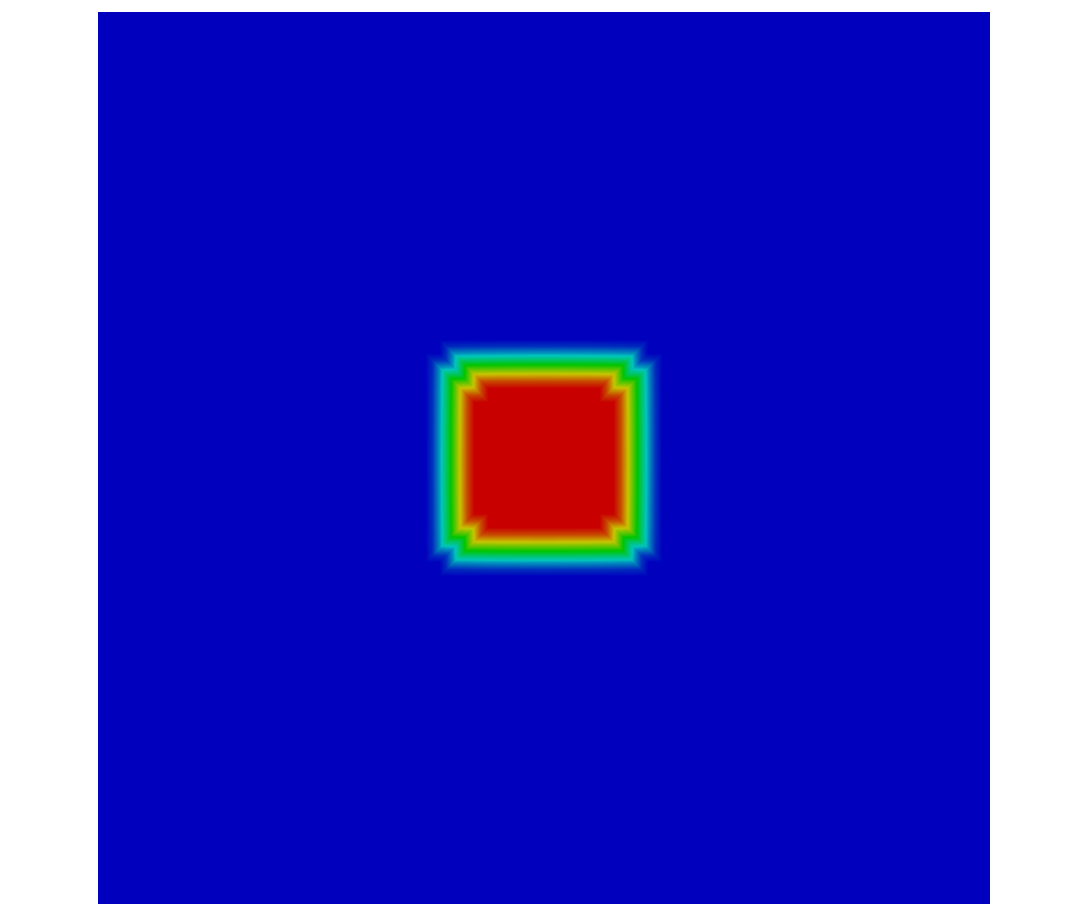}
\includegraphics[angle=-90,width=0.35\textwidth]{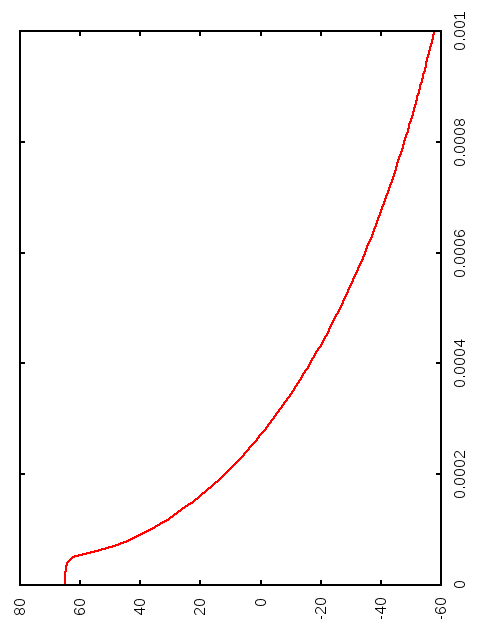}
\caption{({\sc ani$_1^{(0.01)}$}, $\wD = -65$)
Creation of a boundary layer.
Snapshots of the solution at times $t=0,\,4\times10^{-5},\,5\times10^{-5},\,
\,7\times10^{-5},\,10^{-3}$.
A plot of $\mathcal{F}_\gamma^h$ below.
}
\label{fig:bl}
\end{figure}%

Next we simulate the growth of a small crystal, when the sharp interface
evolution is given by (\ref{eq:MSa}--c), (\ref{eq:wD}).
In particular, we fix $H=8$, $\wD = -2$ and $\alpha = 0.03$; and we observe  
that for this choice of parameters the condition (\ref{eq:bl}) is satisfied
if we choose $\epsilon^{-1} = 32\,\pi > \frac{50}3\,\pi$.
A run for (\ref{eq:Ubc},b), when the initial seed has radius $0.1$,
with the discretization parameters $N_f = 4096$, $N_c = 128$, $\tau = 10^{-4}$
and $T=7.5$
is shown in Figure~\ref{fig:BSnew32pi}. We observe that at first the crystal
seed grows, taking on the form of the Wulff shape of $\gamma$. Then the four
sides break and become nonconvex, with the four side arms that grow at the 
corners yielding a shape that is well-known in the numerical simulation of
dendritic growth. 
\begin{figure}
\center
\includegraphics[angle=-0,width=0.19\textwidth]{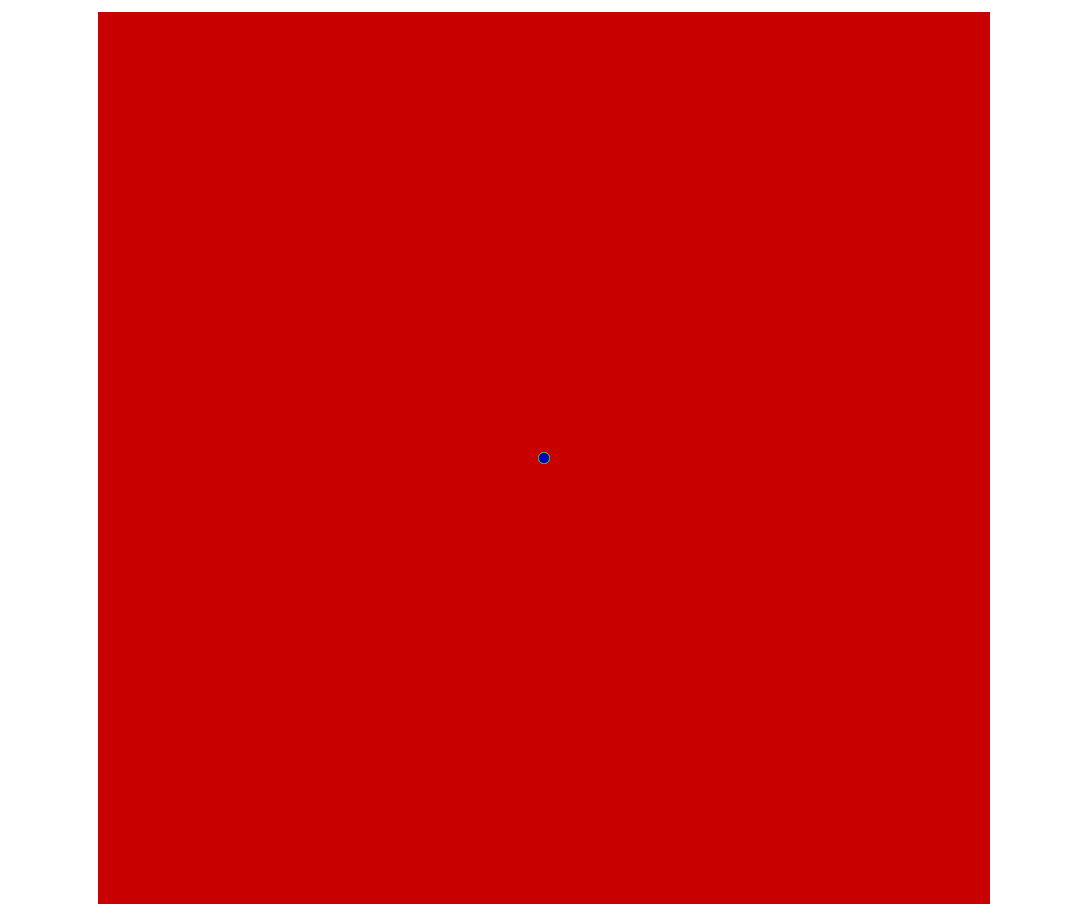}
\includegraphics[angle=-0,width=0.19\textwidth]{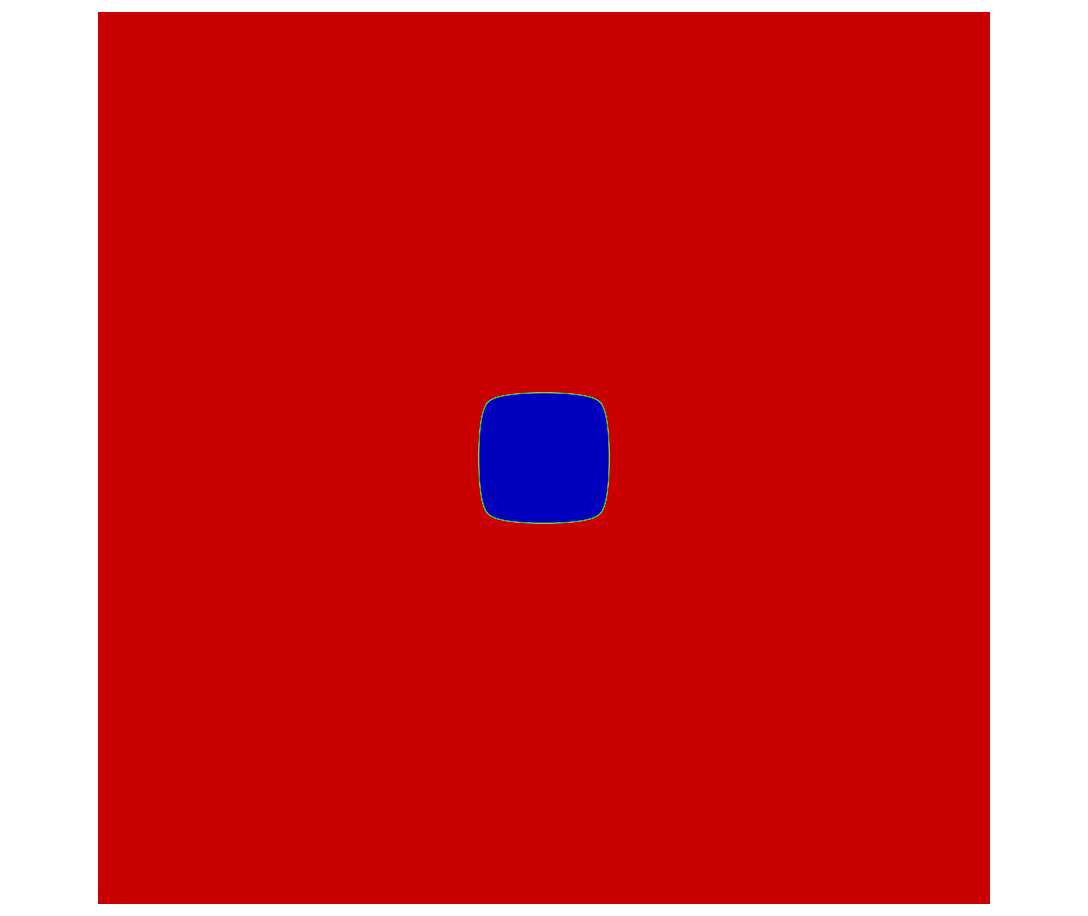}
\includegraphics[angle=-0,width=0.19\textwidth]{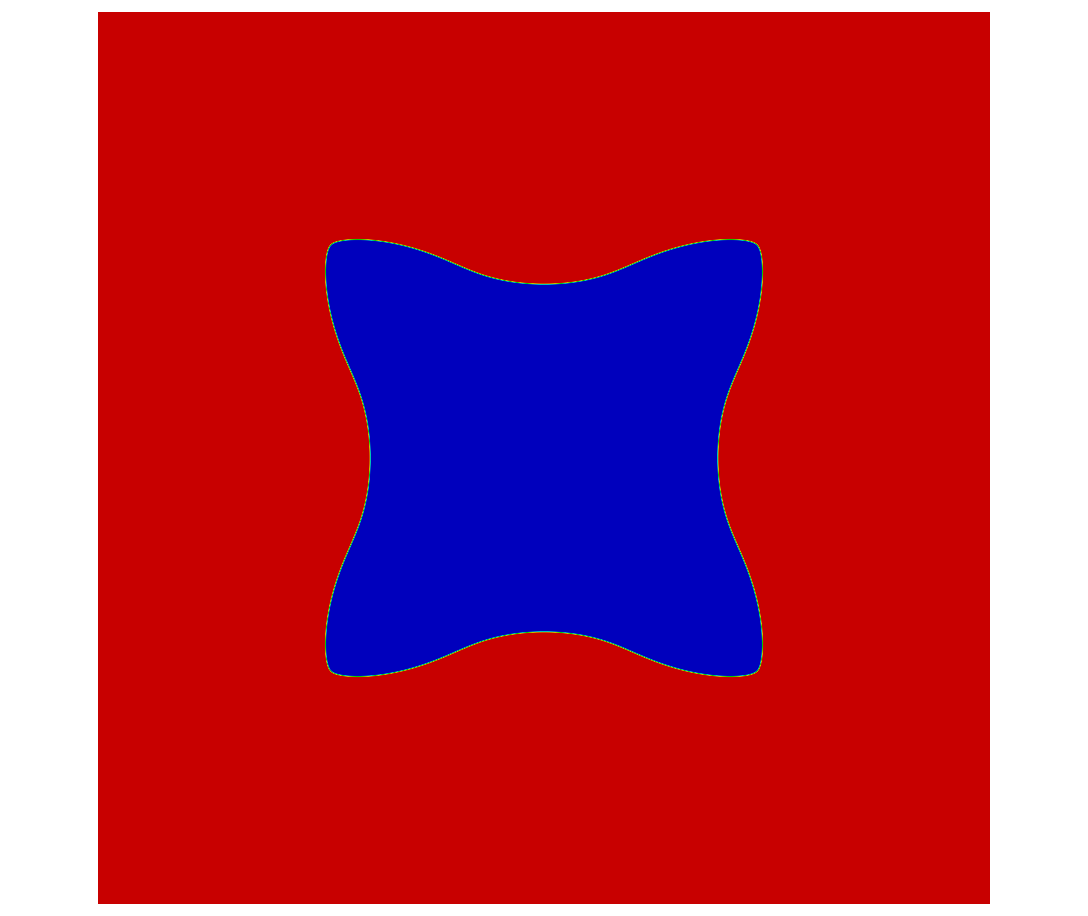}
\includegraphics[angle=-0,width=0.19\textwidth]{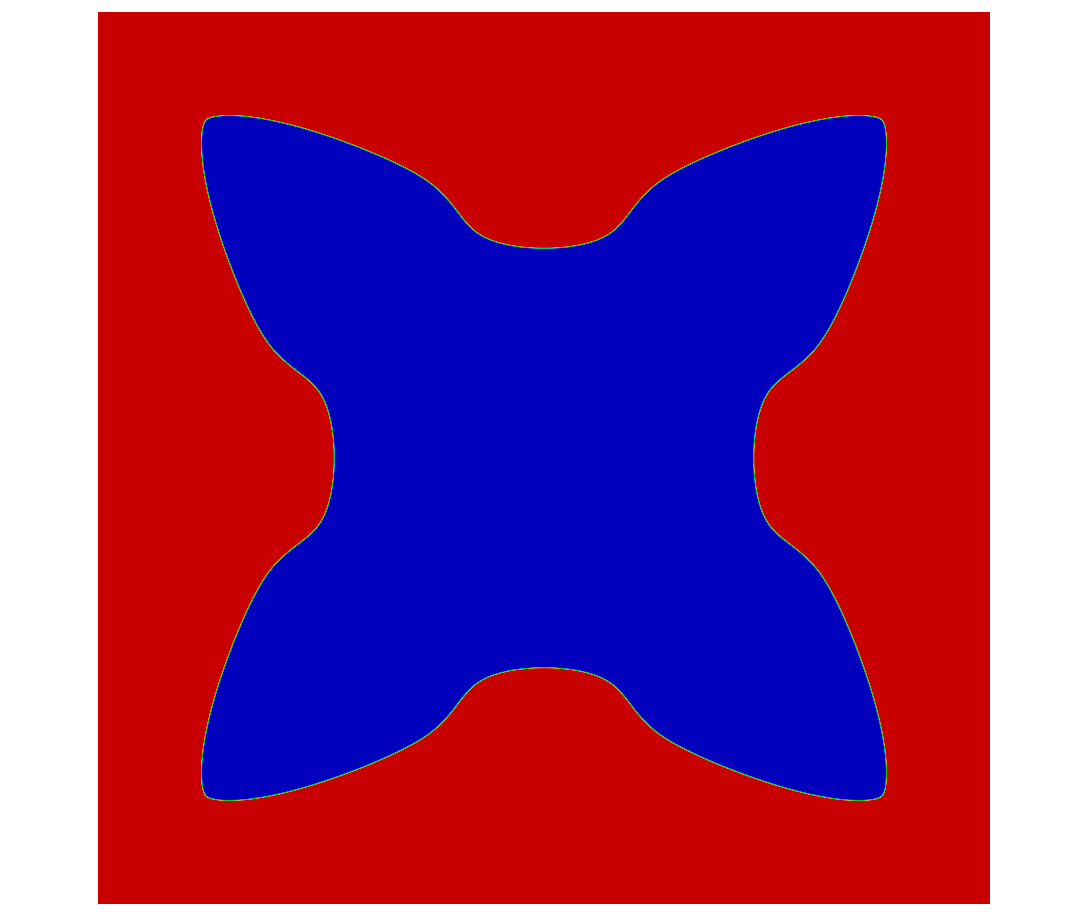}
\includegraphics[angle=-0,width=0.19\textwidth]{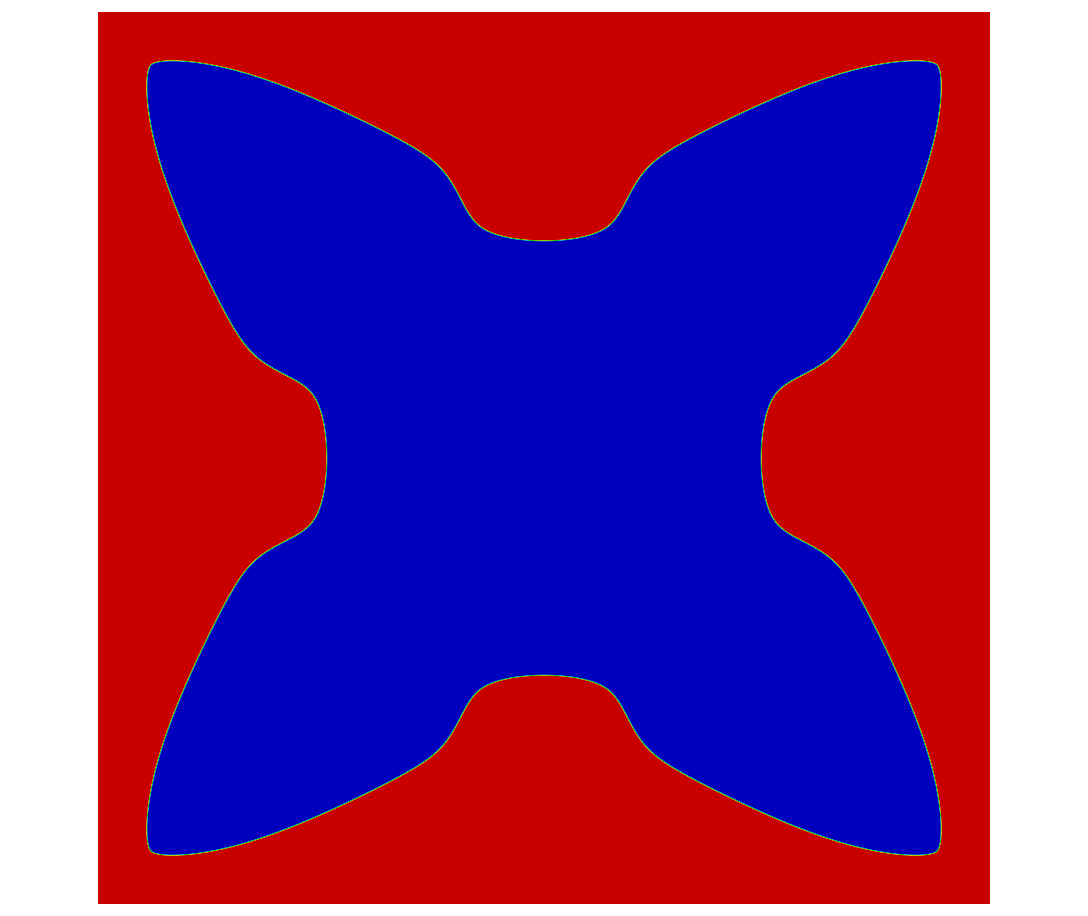}
\includegraphics[angle=-90,width=0.35\textwidth]{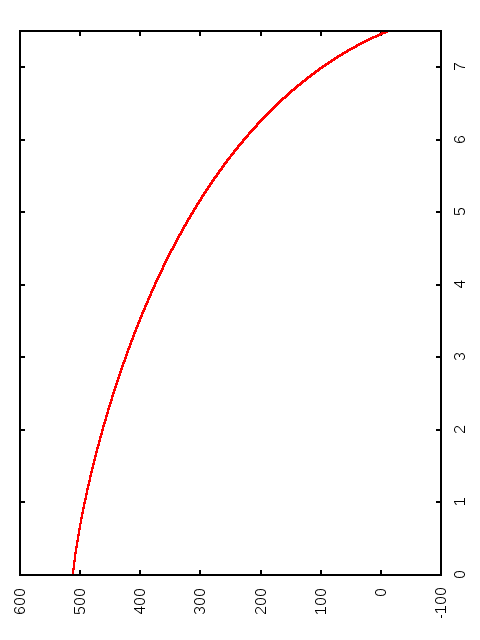}
\caption{({\sc ani$_1^{(0.3)}$}, 
$\wD = -2$, $\epsilon^{-1} = 32\,\pi$, $\Omega=(-8,8)^2$)
A phase field approximation for the anisotropic Mullins--Sekerka problem
(\ref{eq:MSa}--c), (\ref{eq:wD}). 
Snapshots of the solution at times $t=0,\,1,\,5,\,7,\,7.5$.
A plot of $\mathcal{F}_{\gamma}^h$ below.
}
\label{fig:BSnew32pi}
\end{figure}%

\subsection{Numerical results in 3d}
A numerical experiment for (\ref{eq:aMC}) in 3d with the help of  
the approximation (\ref{eq:U2}), (\ref{eq:W}) for
the Allen--Cahn equation (\ref{eq:ACa}--c) can be seen in 
Figure~\ref{fig:aMC3d}.
Here the initial profile is given by a sphere with radius $0.3$. We set
$\tau=10^{-4}$ and $T=0.03$. It can be seen that the initially round sphere
assumes the cylindrical Wulff shape as it shrinks, before the interface shrinks
to a point and disappears completely.
\begin{figure}
\center
\includegraphics[angle=-0,width=0.19\textwidth]{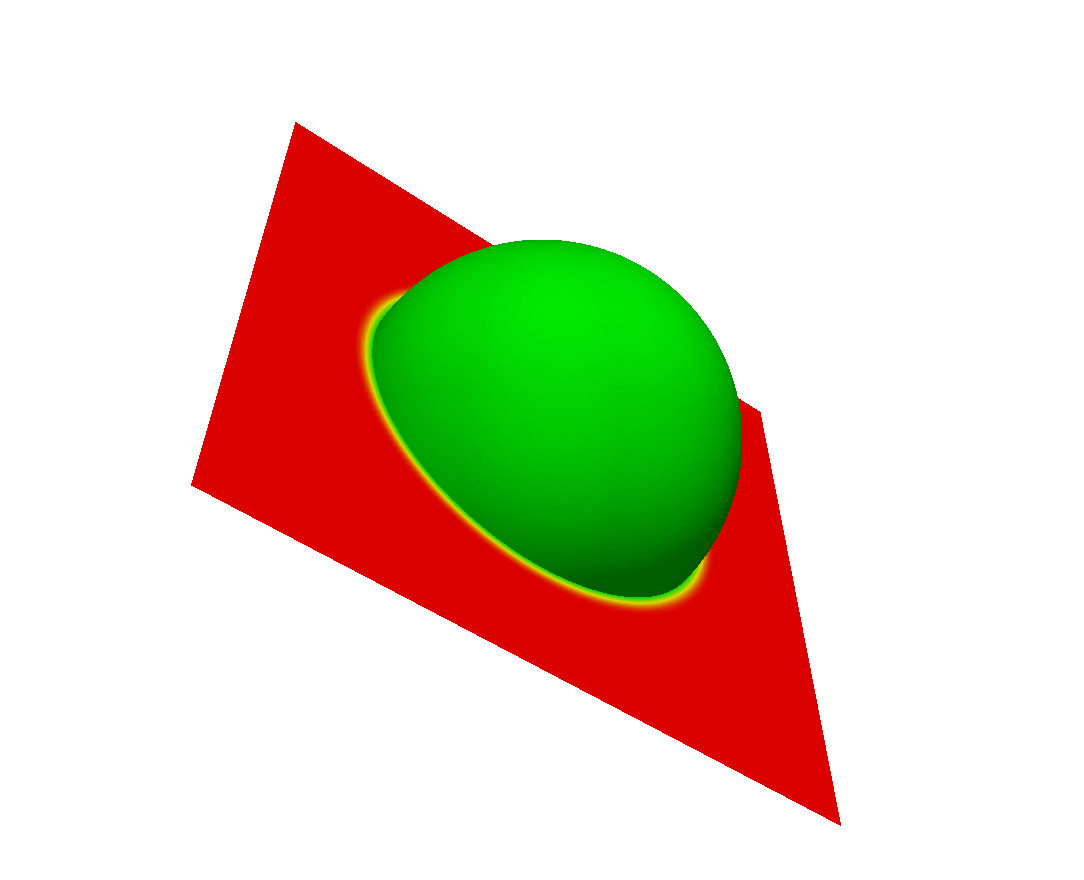}
\includegraphics[angle=-0,width=0.19\textwidth]{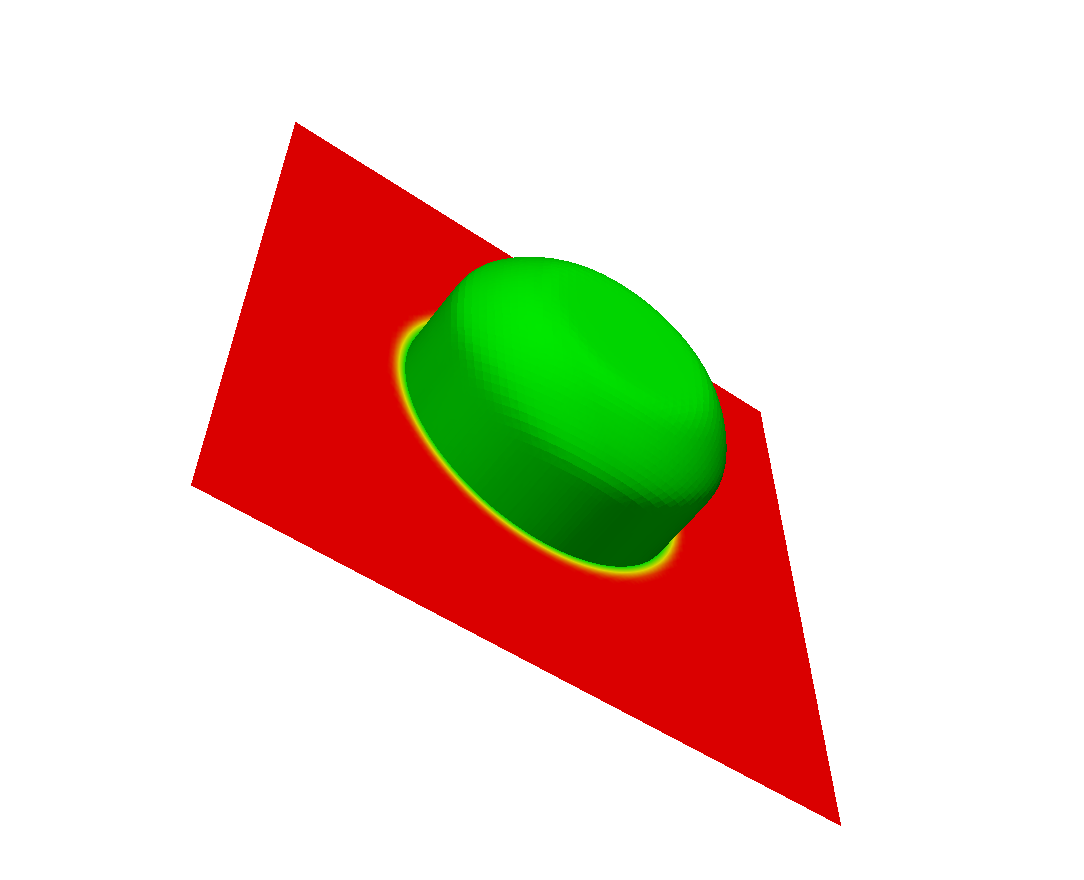}
\includegraphics[angle=-0,width=0.19\textwidth]{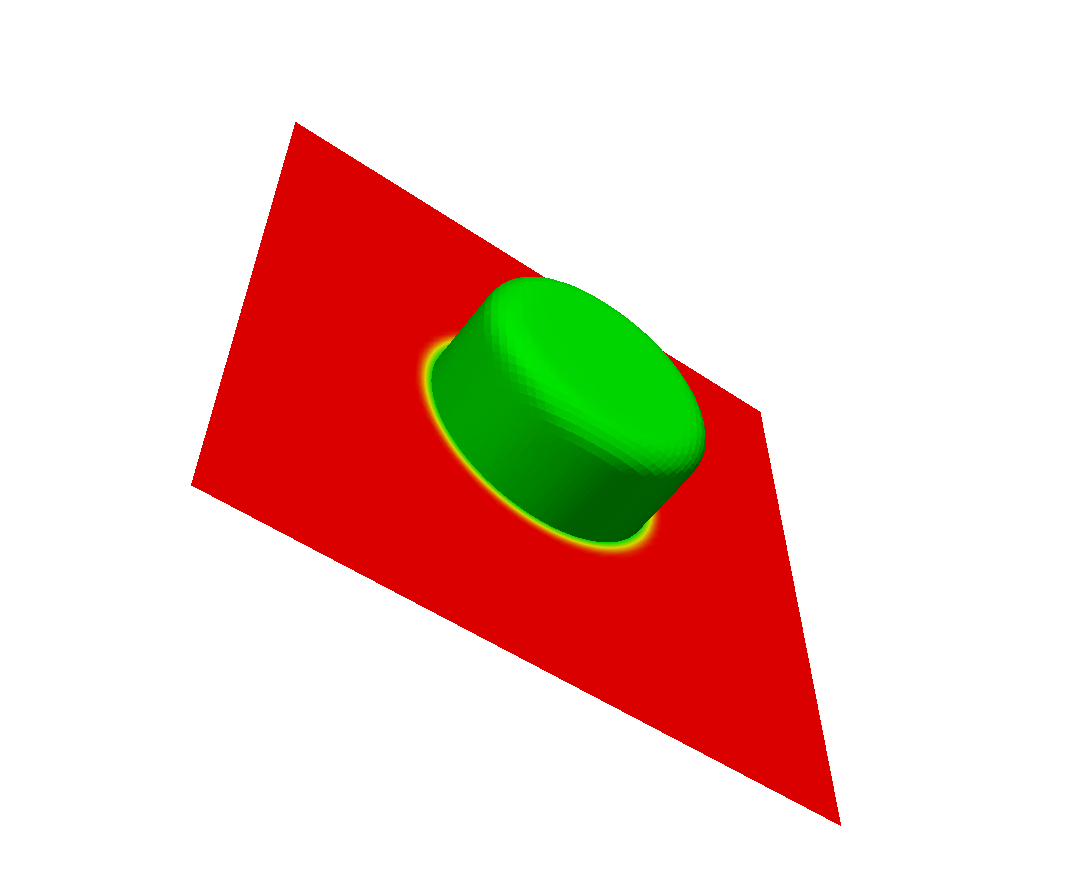}
\includegraphics[angle=-0,width=0.19\textwidth]{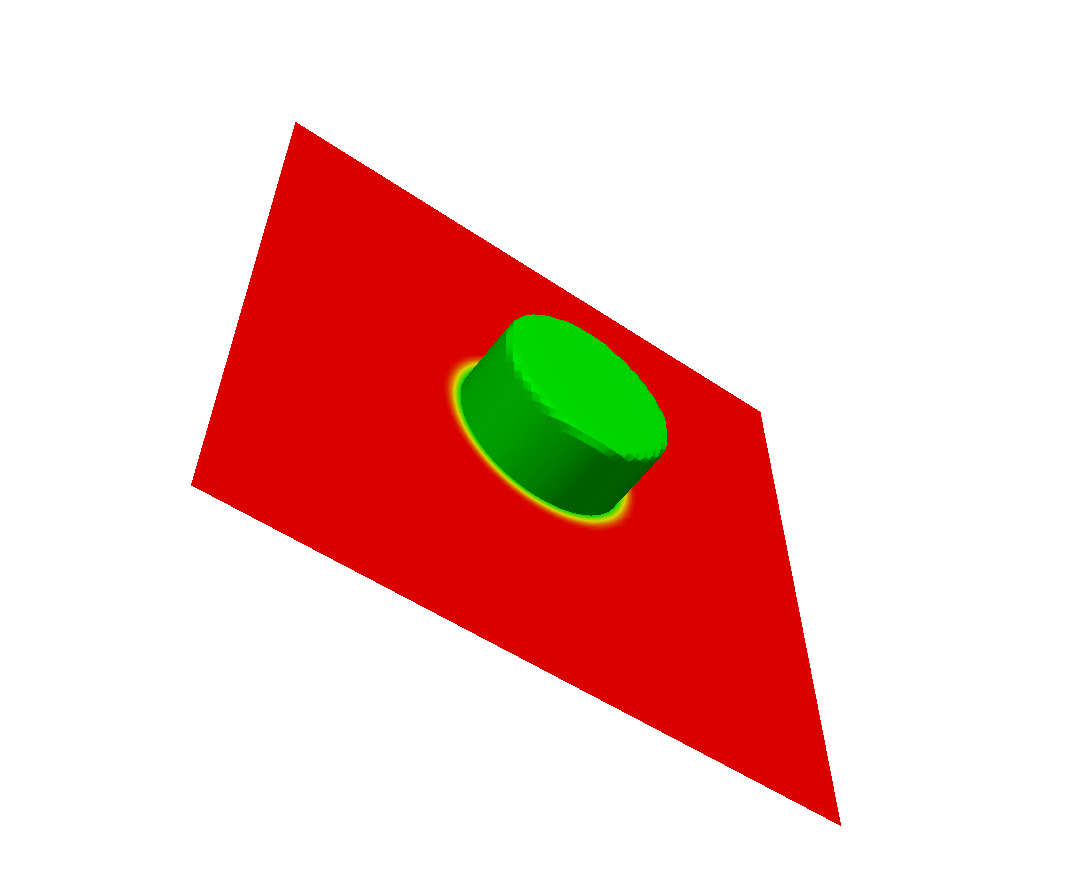}
\includegraphics[angle=-0,width=0.19\textwidth]{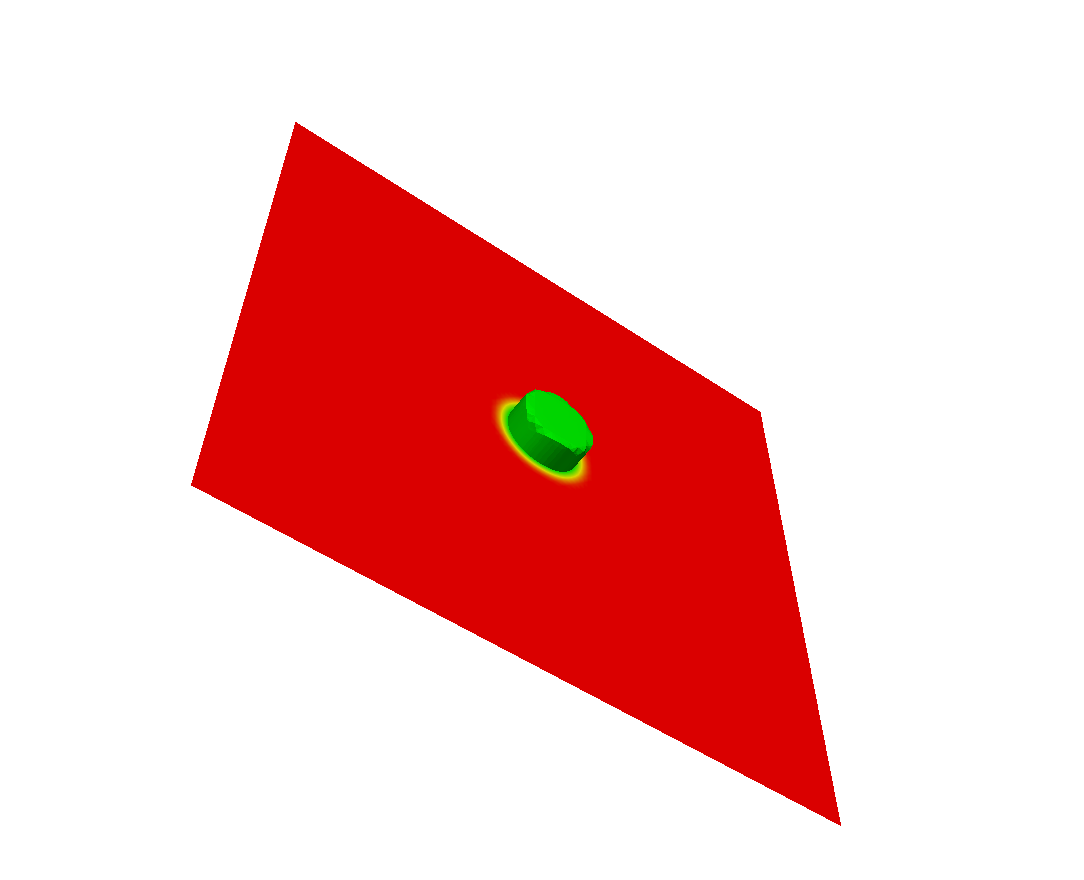}
\includegraphics[angle=-90,width=0.35\textwidth]{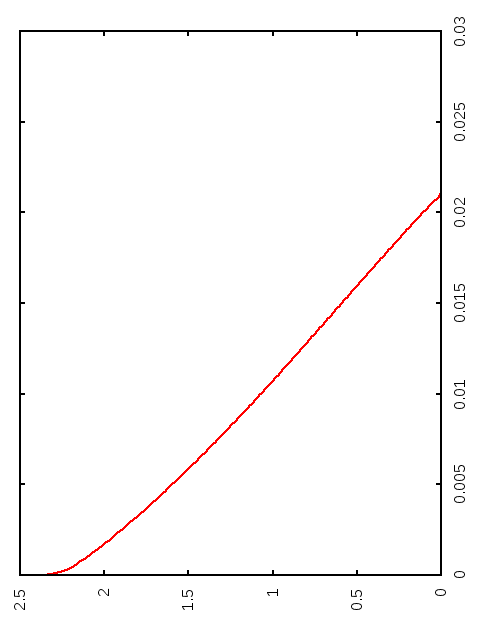}
\caption{({\sc ani$_2$}) 
A phase field approximation for the anisotropic mean curvature flow
(\ref{eq:aMC}).
Snapshots of the solution at times $t=0,\,5\times10^{-3},\,10^{-2},\,
1.5\times10^{-2},\,2\times10^{-2}$.
A plot of $\mathcal{E}_\gamma^h$ below.
}
\label{fig:aMC3d}
\end{figure}%

A numerical experiment for (\ref{eq:aSD}) with the help of
the approximation (\ref{eq:U},b) for
the Cahn--Hilliard equation (\ref{eq:CHa}--d) 
can be seen in Figure~\ref{fig:hexSD3d}.
Here the initial profile is given by a sphere with radius $0.3$. 
We set $\tau=10^{-6}$ and $T=10^{-3}$. We can clearly see the 
evolution from the round sphere to the strongly facetted Wulff shape.
\begin{figure}
\center
\includegraphics[angle=-0,width=0.19\textwidth]{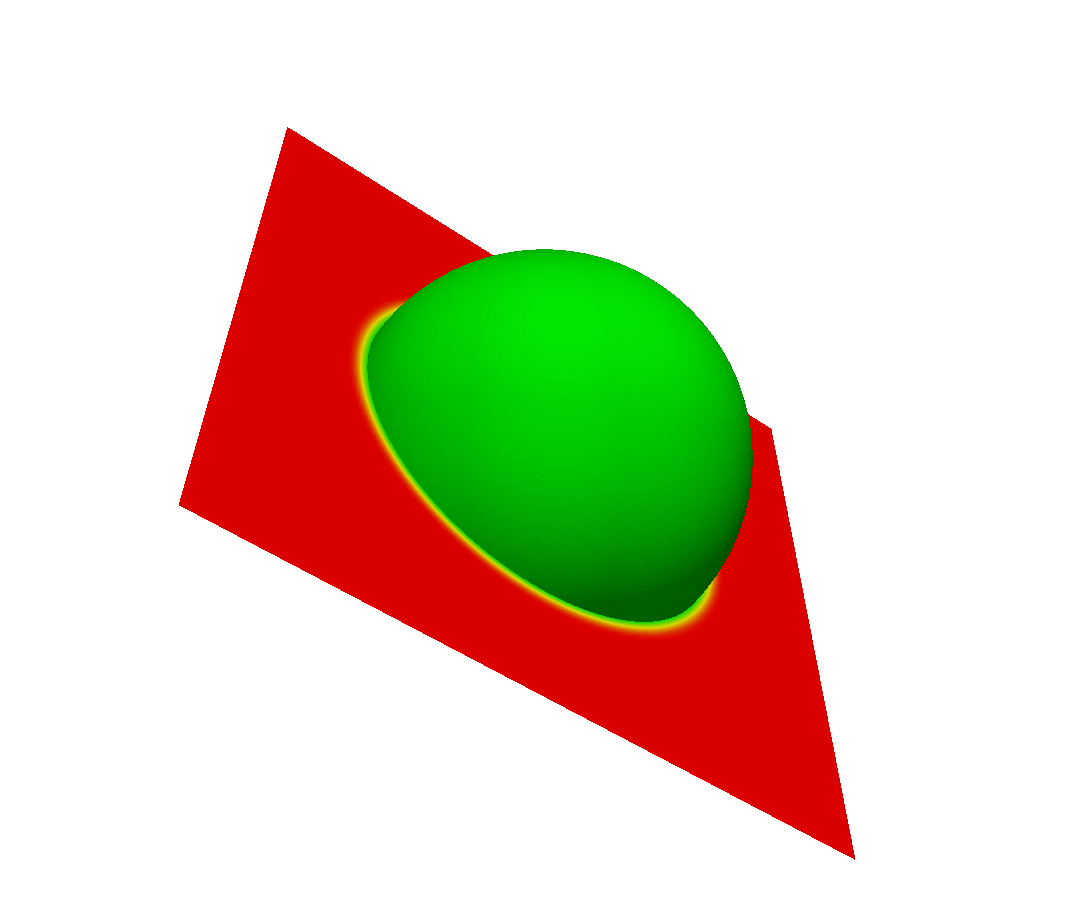}
\includegraphics[angle=-0,width=0.19\textwidth]{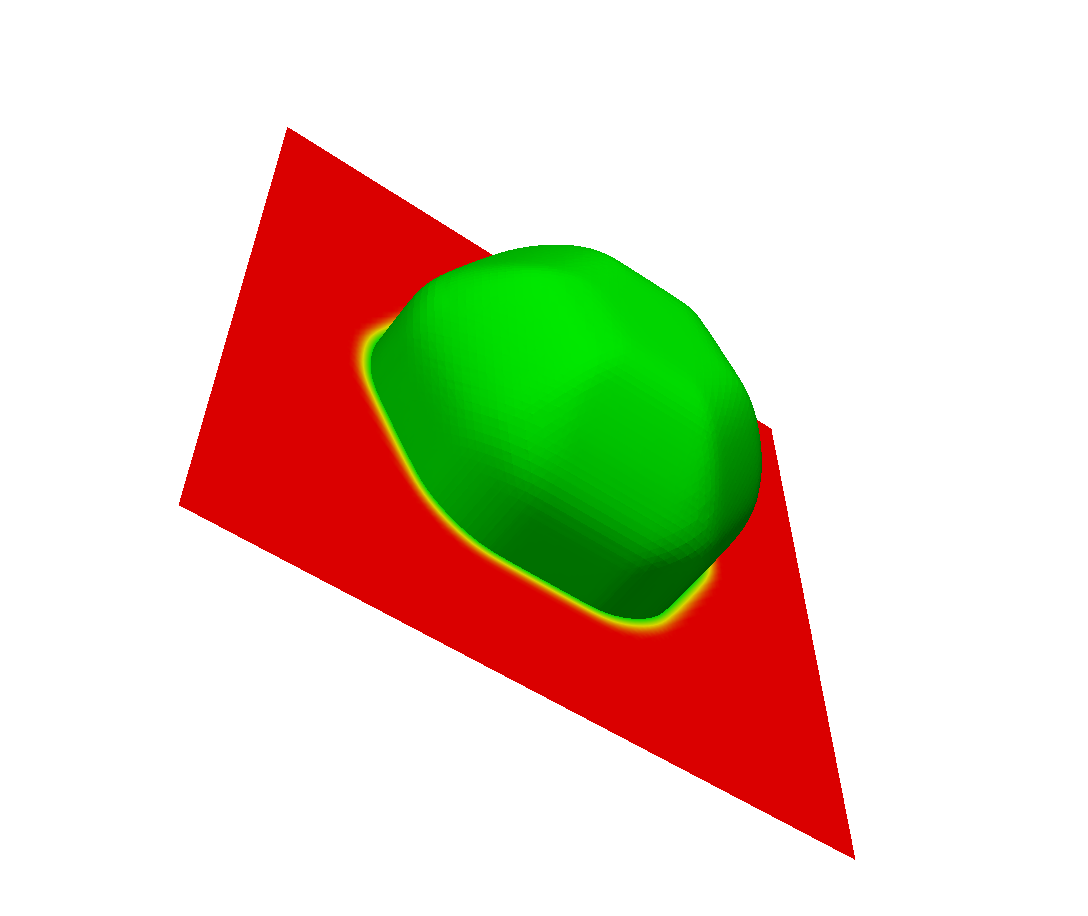}
\includegraphics[angle=-0,width=0.19\textwidth]{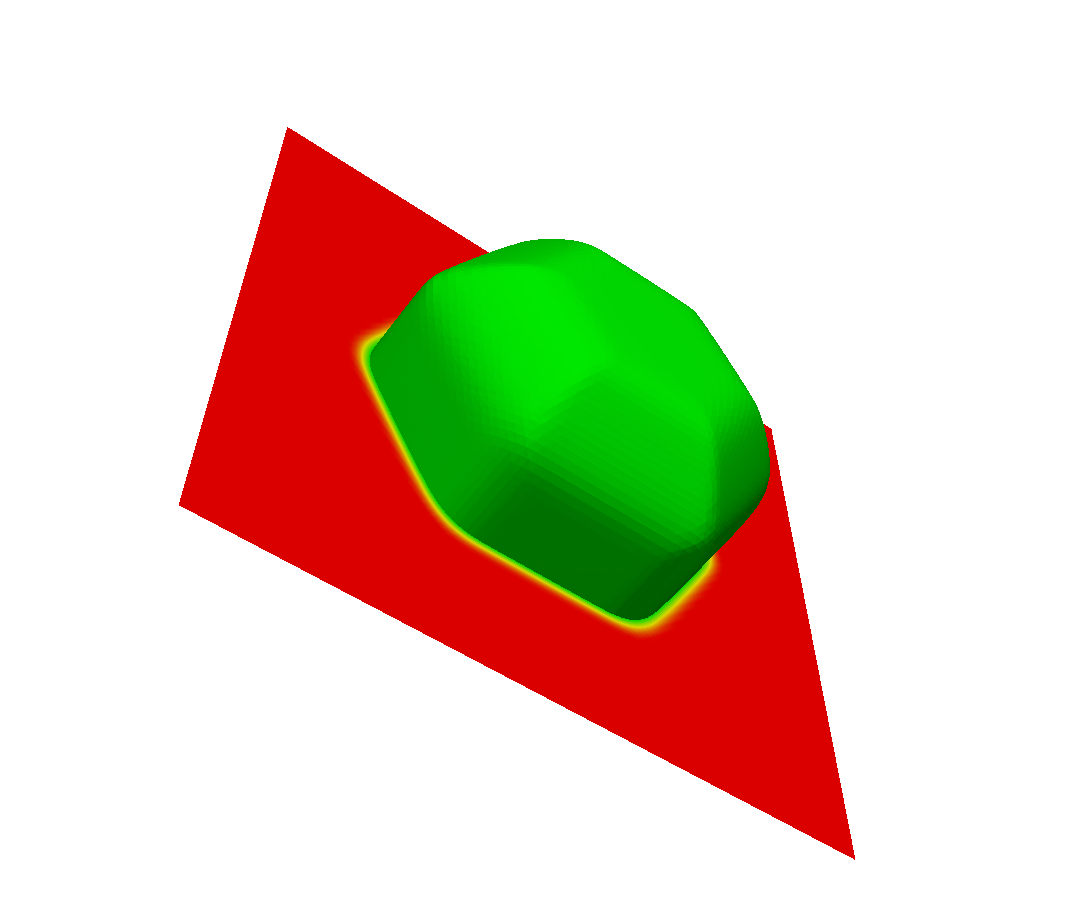}
\includegraphics[angle=-0,width=0.19\textwidth]{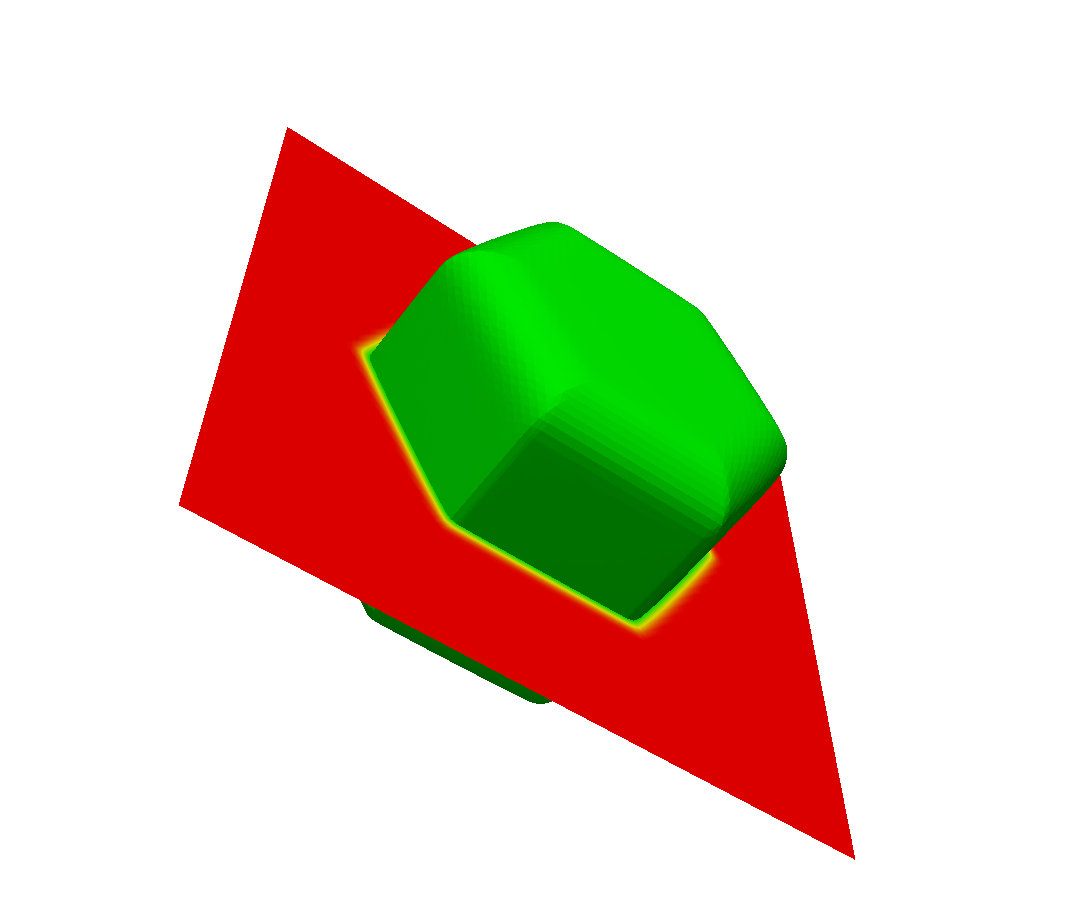}
\includegraphics[angle=-0,width=0.19\textwidth]{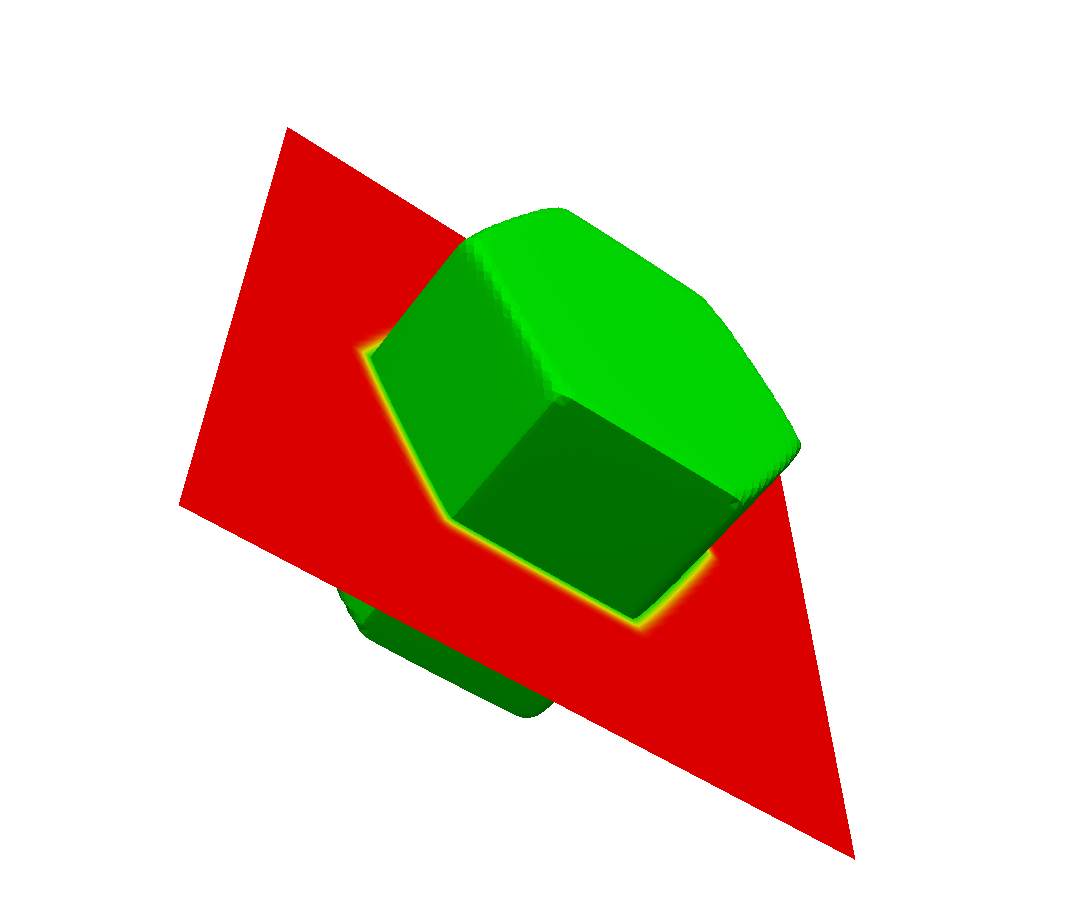}
\includegraphics[angle=-90,width=0.35\textwidth]{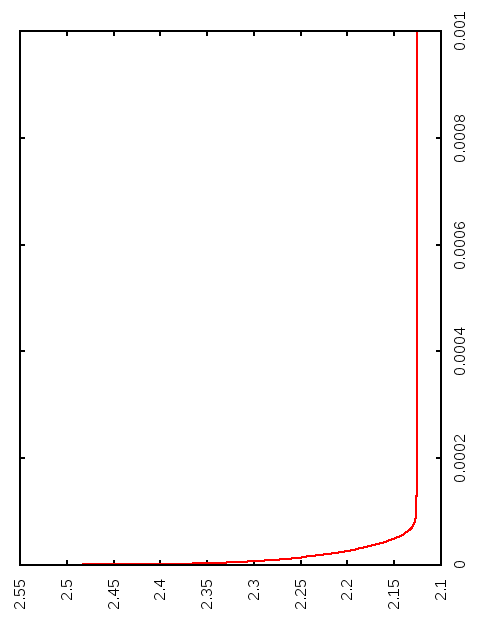}
\caption{({\sc ani$_4$})
A phase field approximation for anisotropic surface diffusion (\ref{eq:aSD}).
Snapshots of the solution at times $t=0,\,10^{-5},\,
2\times10^{-5},\,5\times10^{-5},\, 10^{-3}$.
A plot of $\mathcal{E}_\gamma^h$ below.
}
\label{fig:hexSD3d}
\end{figure}%

A numerical approximation for the sharp interface problem (\ref{eq:MSa}--d), 
is shown in Figure~\ref{fig:3daMS}. Here the initial interface is given by the
boundary of a $8\times1\times1$ cuboid with minor side length $0.1$, and
we set $\tau=10^{-5}$ and $T=5\times10^{-3}$. 
We can observe that during the evolution the elongated 
facets become bent and nonconvex, before the solution converges to the Wulff 
shape.
\begin{figure}
\center
\includegraphics[angle=-0,width=0.19\textwidth]{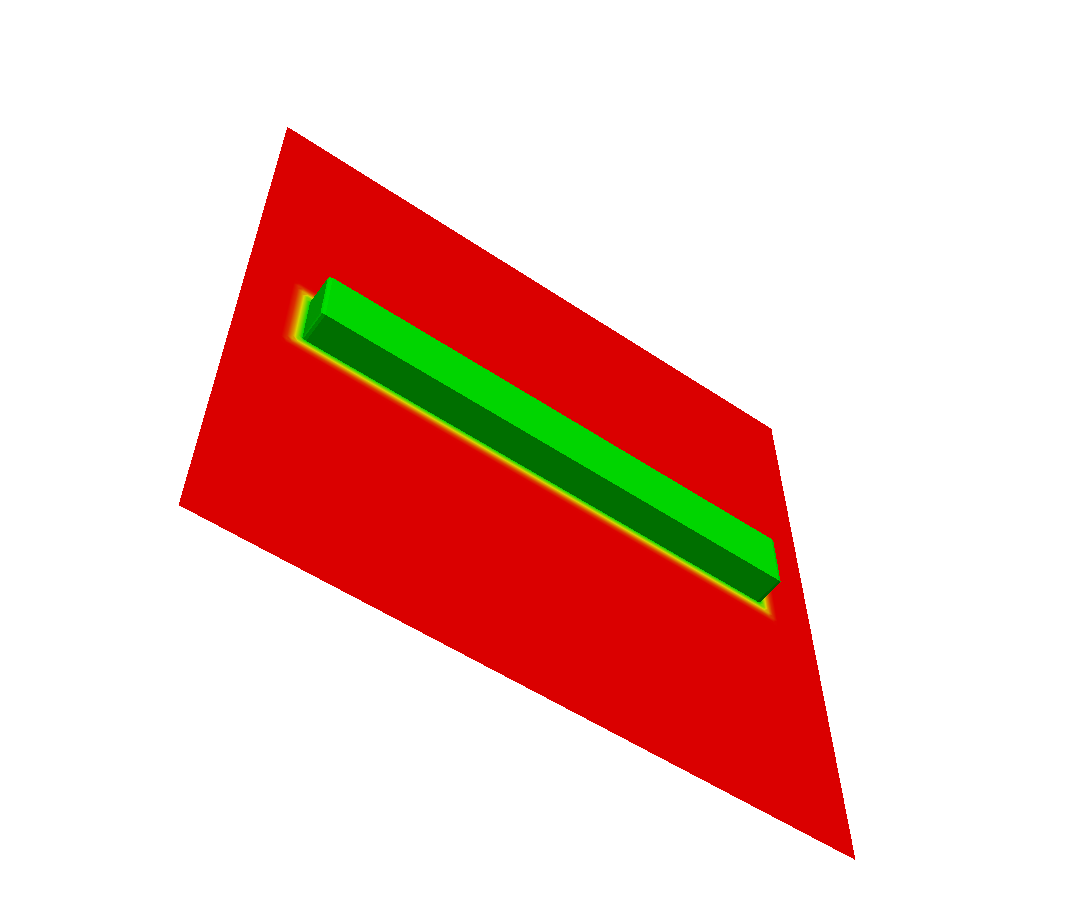}
\includegraphics[angle=-0,width=0.19\textwidth]{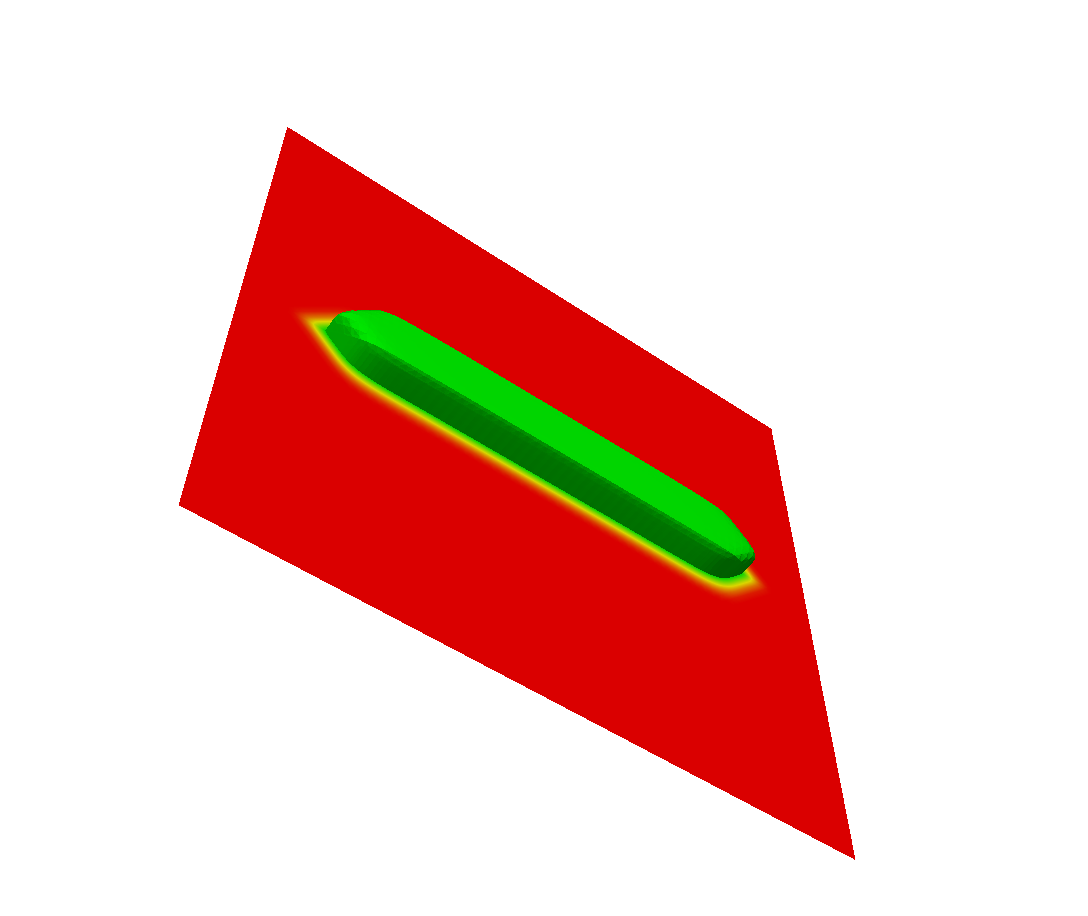}
\includegraphics[angle=-0,width=0.19\textwidth]{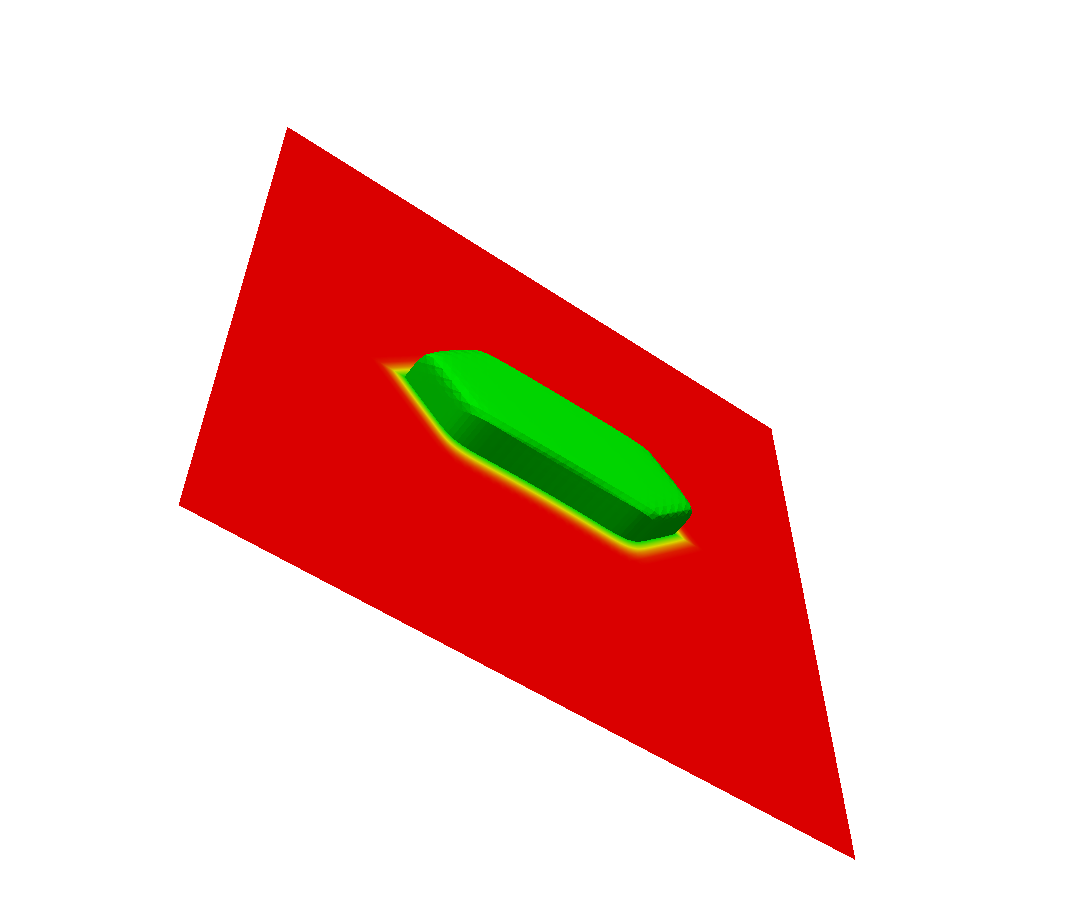}
\includegraphics[angle=-0,width=0.19\textwidth]{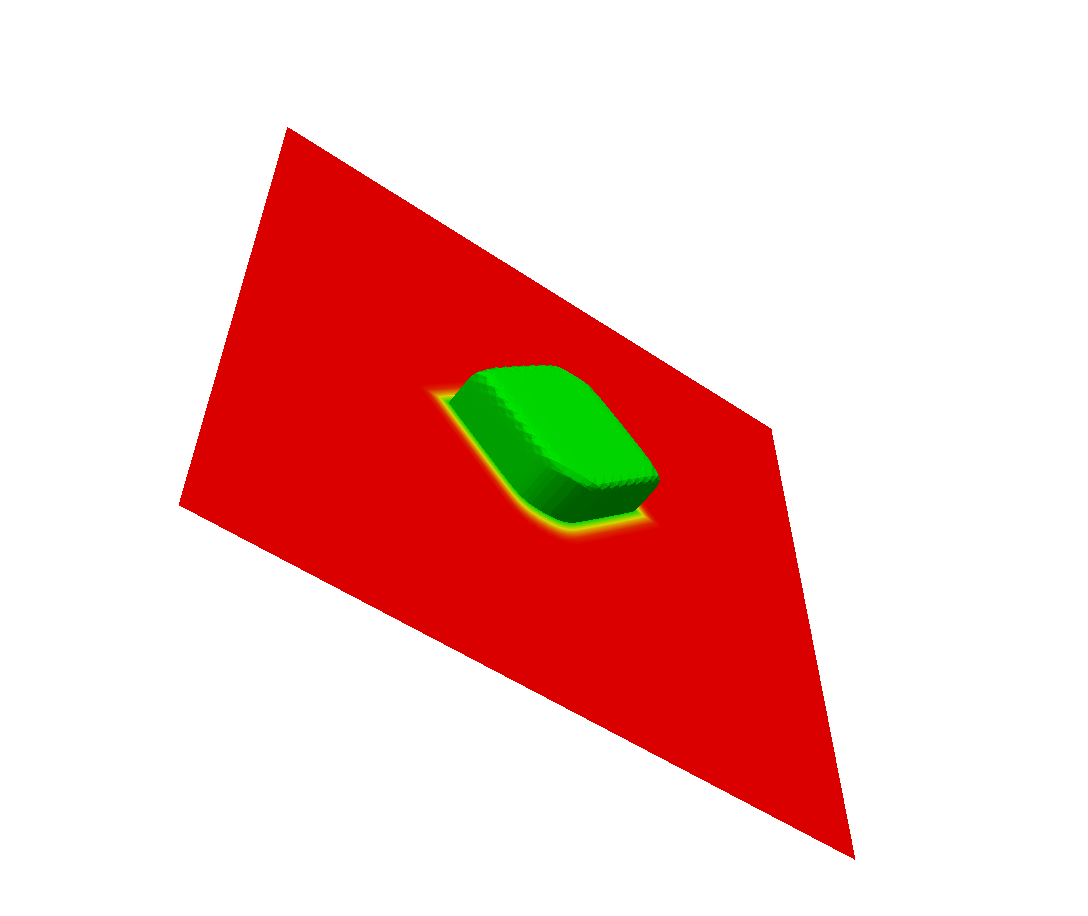} 
\includegraphics[angle=-0,width=0.19\textwidth]{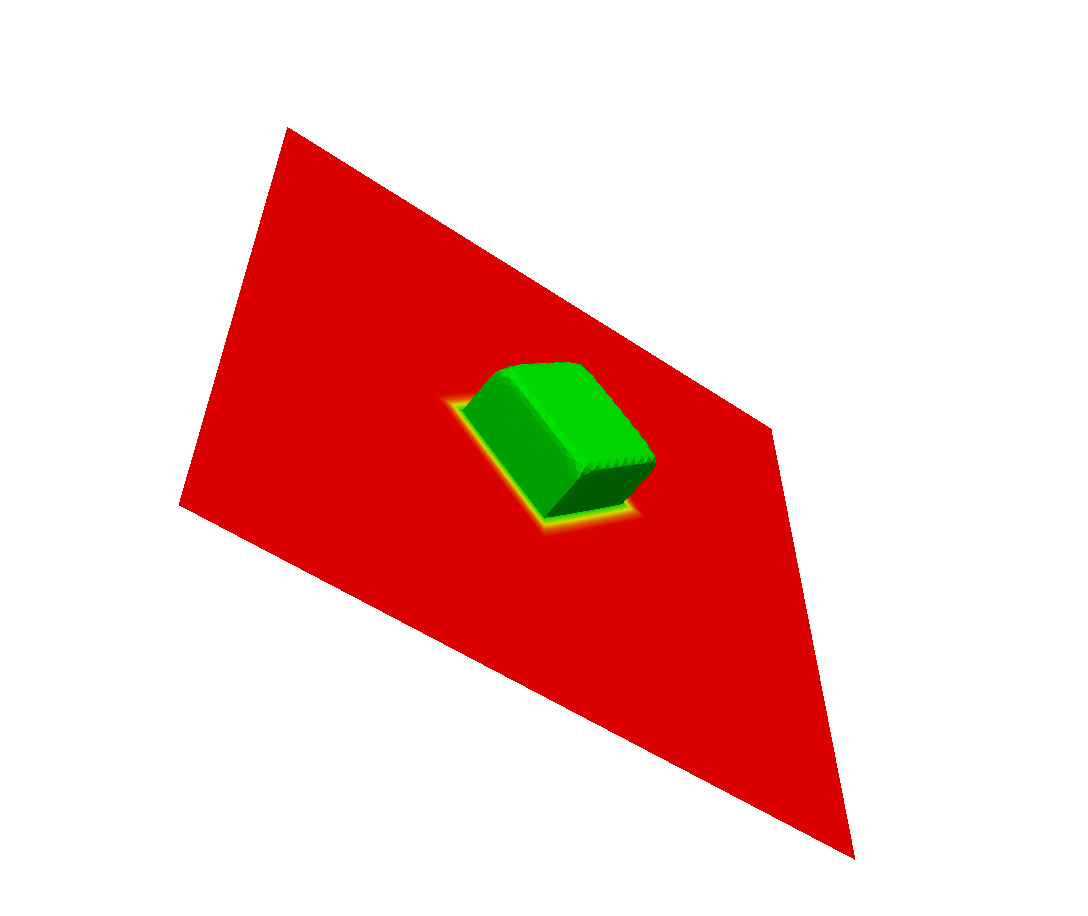}
\includegraphics[angle=-0,width=0.19\textwidth]{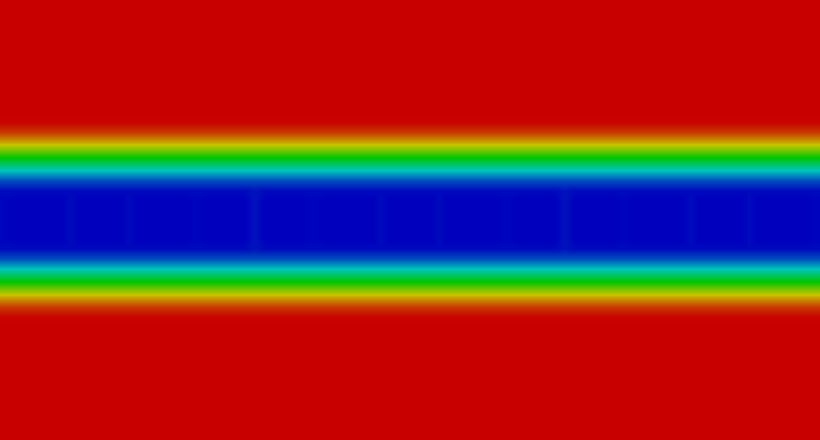}
\includegraphics[angle=-0,width=0.19\textwidth]{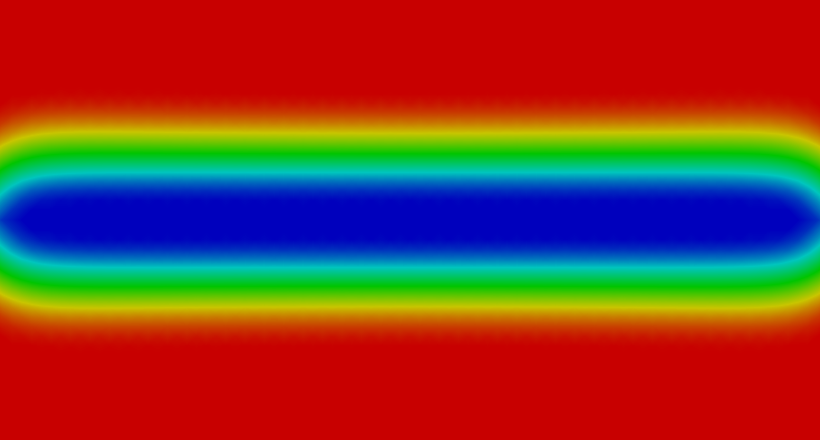}
\includegraphics[angle=-0,width=0.19\textwidth]{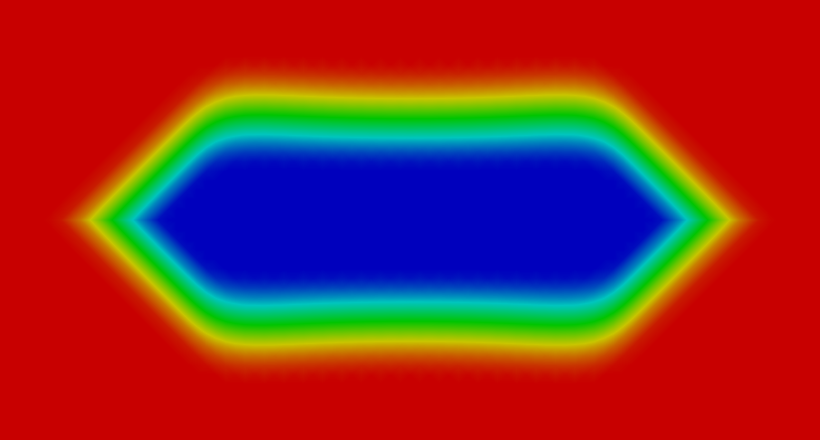}
\includegraphics[angle=-0,width=0.19\textwidth]{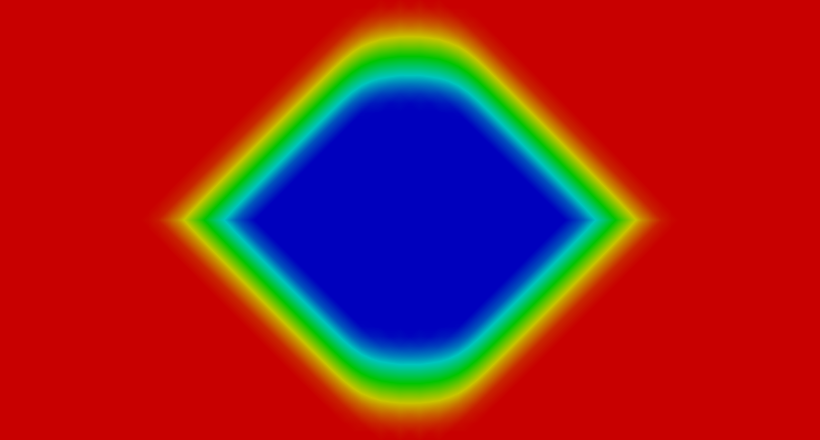}
\includegraphics[angle=-0,width=0.19\textwidth]{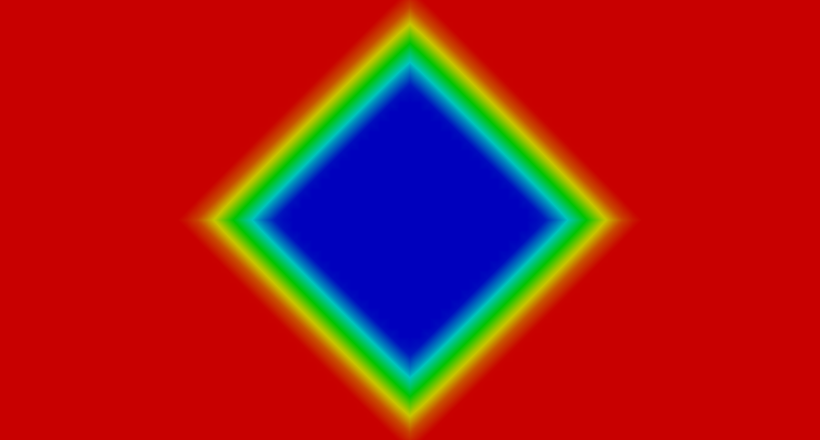}
\includegraphics[angle=-90,width=0.35\textwidth]{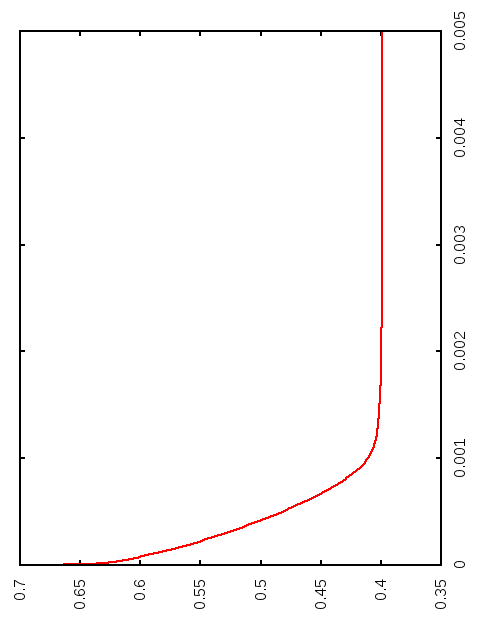}
\caption{({\sc ani$_1^\star$})
A phase field approximation for the anisotropic Mullins--Sekerka problem
(\ref{eq:MSa}--d).
Snapshots of the solution at times $t=0,\,10^{-4},\,5\times10^{-4},\,
10^{-3},\,5\times10^{-3}$. The middle row shows detailed $2d$ plots of the 
solution in the $x_1$--$x_2$ plane.
A plot of $\mathcal{E}_\gamma^h$ below.
}
\label{fig:3daMS}
\end{figure}%


\end{document}